\documentclass[pdflatex,sn-mathphys-num]{sn-jnl}

\usepackage[table]{xcolor}

\usepackage{comment}
\usepackage{graphicx}%
\usepackage{amsmath,amssymb,amsfonts}%
\usepackage{amsthm}%
\usepackage{textcomp}%
\usepackage{mathtools}
\usepackage[T1]{fontenc}

\theoremstyle{thmstyleone}%
\newtheorem{theorem}{Theorem}[section]

\newtheorem{proposition}[theorem]{Proposition}
\newtheorem{lemma}[theorem]{Lemma}
\newtheorem{corollary}[theorem]{Corollary}

\theoremstyle{thmstyletwo}%
\newtheorem{example}[theorem]{Example}
\newtheorem{remark}[theorem]{Remark}

\theoremstyle{thmstylethree}%
\newtheorem{definition}[theorem]{Definition}

\newcommand{\PosId}{\operatorname{Id}^{+}}

\newcommand{\tprod}[2]{#1\ast_{\tau}#2}

\newcommand{\layer}[1]{L_{#1}}
\newcommand{\Ctau}{\mathcal C_{\tau}}
\newcommand{\Cm}{\mathcal C_{\mathrm m}}
\newcommand{\Ltau}{\mathcal L_{\tau}}
\newcommand{\Lm}{\mathcal L_{\mathrm m}}

\makeatletter
\newcommand{\taggeditem}[2]{%
 \item[#1]%
 \def\@currentlabel{#1}%
 \label{#2}%
}
\makeatother

\begin{document}

\title[Categorical rigid direct-system representations]{Categorical forms of the rigid direct-system representation for finite local-unit-aligned totally ordered monoids}

\author*[1,2]{\fnm{S\'andor} \sur{Jenei}}\email{jenei.sandor@uni-eszterhazy.hu, jenei@ttk.pte.hu}

\affil*[1]{\orgdiv{Institute of Mathematics and Informatics}, \orgname{Eszterh\'azy K\'aroly Catholic University}, \orgaddress{\country{Hungary}}}
\affil[2]{\orgdiv{Institute of Mathematics and Informatics}, \orgname{University of P\'ecs}, \orgaddress{\country{Hungary}}}

\abstract{We study the functorial and categorical structure of the canonical rigid direct-system representation of finite local-unit-aligned totally ordered monoids.
The local-unit map \(\tau\) induces a canonical \(\tau\)-multiplication-coherent decomposition into component monoids, and the associated representation reconstructs the original ordered monoid from a finite chain-indexed rigid direct system whose proper transition maps are unit-constant.
The present paper identifies the morphism classes for which this representation is categorical.
First, we prove an equivalence between finite local-unit-aligned totally ordered monoids with strict block morphisms and rigid direct systems with directed-order-compatible system morphisms.
Second, we prove an intrinsic equivalence for \(\tau\)-compatible homomorphisms, that is, isotone unital homomorphisms commuting with the local-unit map.
In this second setting, distinct positive idempotents and hence distinct canonical components may collapse to a single target component; on the direct-system side this is represented by non-injective isotone index maps together with component maps satisfying the corresponding collapse and absorption axioms.
Thus the canonical rigid direct-system representation is functorial both for strict block morphisms and for intrinsic \(\tau\)-compatible homomorphisms.}

\keywords{totally ordered monoid, local-unit-aligned monoid, direct system, categorical equivalence, functorial reconstruction, rigid representation}

\pacs[MSC Classification]{Primary 06F05, 20M50; Secondary 20M10, 20M30, 18A22}

\maketitle

\section{Introduction}\label{IntroONE}

Finite totally ordered monoids provide a setting in which algebraic and order-theoretic structure are tightly constrained.
In the local-unit-aligned case, each element has coinciding greatest right and left local units.
The resulting local-unit map \(\tau\colon M\to \PosId(\mathbf M)\) induces a canonical stratification of the positive idempotent skeleton, and the \(\tau\)-multiplication-coherent part of this stratification supports a rigid direct-system representation.
The purpose of this paper is to study the categorical content of that representation.

The object-level representation, recalled in Section~\ref{sec:recap}, assigns to a finite local-unit-aligned totally ordered monoid a finite chain-indexed direct system of component monoids.
These components are induced intrinsically by the canonical \(\tau\)-multiplication-coherent partition, and the ambient monoid is recovered from them by the directed lexicographic order and the corresponding direct-system multiplication.
A finite rigidity phenomenon is essential: proper transition maps in the canonical system are forced to be constant at the unit of the target component.
Thus the representation is not an arbitrary direct-system decomposition, but an ordinal-sum-like rigid assembly in which mixed products are governed by absorption toward the upper component.

The guiding question of the present paper is functorial.
Once a monoid has been replaced by its canonical rigid direct system, which homomorphisms of monoids are represented exactly by morphisms of the corresponding systems?
The answer is not unique, because two natural morphism classes occur.

The first class consists of what we call \emph{strict block morphisms}.
Such a morphism is required to send each canonical component into a canonical component and to send the distinguished block unit to the corresponding target block unit.
Here ``strict'' refers to block-unit preservation, not to injectivity on the block chain: several source blocks may still be sent to the same target block, but the direct-system compatibility then forces the corresponding lower component maps to be unit-constant.
This produces the cleanest categorical expression of the reconstruction theorem: finite local-unit-aligned totally ordered monoids with such morphisms are equivalent to rigid direct systems with directed-order-compatible componentwise morphisms.

The second class is intrinsic.
A homomorphism should preserve the local-unit geometry itself, meaning that it commutes with \(\tau\).
Such a \(\tau\)-compatible homomorphism need not be injective on positive idempotents.
Consequently, it may identify distinct positive idempotents and collapse several canonical components of the source into one canonical component of the target.
On the direct-system side this is represented by an isotone, not necessarily injective, map between index chains together with component morphisms satisfying collapse and absorption conditions.
Rigidity is crucial here: the unit-constancy of proper transition maps is exactly what makes such component collapse compatible with multiplication and order.

\medskip
\noindent\textbf{Main theorem, informally.}
The canonical rigid direct-system representation yields two quasi-inverse categorical equivalences.
The first is an equivalence between the strict block category of finite local-unit-aligned totally ordered monoids and the category of rigid direct systems with directed-order-compatible system morphisms.
The second is an equivalence between the category of finite local-unit-aligned totally ordered monoids with \(\tau\)-compatible homomorphisms and the category of rigid direct systems with \(\tau\)-compatible collapse morphisms.
\medskip

This categorical viewpoint shows that the canonical decomposition is not merely an object-level reconstruction device.
It is a functorial invariant whose morphism theory has two complementary forms.
The strict block form remembers canonical components and their distinguished block units component by component, while still allowing several source components to be mapped into one target component when the strict block and system-compatibility conditions are satisfied.
The intrinsic form is controlled instead by the local-unit equation \(\tau_{\mathbf N}f=f\tau_{\mathbf M}\); it also permits collapse of components, but records that collapse through the local-unit geometry rather than through explicit preservation of block units.

The distinction between these two morphism theories is one of the main points of the paper.
In the strict block setting, the index map records how canonical components are transported.
In the \(\tau\)-compatible setting, the index map records how local-unit strata may be identified.
The second situation is subtler: a source component may not simply be sent into an arbitrary target component; the collapse must be compatible with the target's canonical \(\tau\)-multiplication-coherent partition.
The proofs show that \(\tau\)-multiplication-cohesiveness of the rigid components and unit-constancy of the transition maps are exactly the structural facts needed to make this collapse functorial.

The paper also clarifies the role of the representation in the classical component-and-transition-map tradition.
Clifford's strong-semilattice representation and P\l{}onka's sums are precedents for recovering a global algebraic object from local components and connecting maps \cite{Clifford1941,CliffordPreston1961,Plonka1967,Plonka1968}.
In categorical language, such decompositions may be viewed as indexed families of algebraic objects, or functors from an ordered index category, together with a construction which assembles the indexed data into a total object.
This viewpoint appears explicitly in Bowman's construction functors for topological semigroups \cite{Bowman1975}, in the categorical treatment of semilattice sums and semilattice representations \cite{RomanowskaSmith1991,RomanowskaSmith1997}, and in Zawadowski's monadic treatment of generalized P\l{}onka sums and products via lax and oplax morphisms of monads \cite{Zawadowski2015}.
Related recent work on duality and enriched forms of P\l{}onka sums shows that this categorical direct-system viewpoint remains active beyond the original semilattice-sum setting \cite{Bonzio2018,FuscoPaoli2025}.

There is also a categorical side to semigroup decompositions themselves.
The greatest semilattice image of a semigroup is a reflection from the category of semigroups to the category of semilattices, and its limit-preservation properties have been studied categorically \cite{JanelidzeLaanMarki2008}.
A different, but important, categorical structure theorem is the Ehresmann--Schein--Nambooripad correspondence between inverse semigroups and inductive groupoids, together with its later extensions \cite{Hollings2012,DeWolfPronk2018}.
More generally, Grothendieck-construction methods have also been used to classify monoid extensions \cite{Manuell2022}.
These results provide categorical context for the present paper, but the present equivalences are of a different kind: they concern the functoriality of a canonical local-unit decomposition rather than a passage to groupoids, fibrations, or general semilattice quotients.

In ordered semigroup theory, ordinal-sum phenomena go back to Clifford's work on naturally totally ordered commutative semigroups \cite{Clifford1954,Clifford1958}.
The present setting is different: no inverse operation is assumed, commutativity is not assumed, and the identity need not lie at either endpoint of the chain.
The components and transition maps are induced by the local-unit map \(\tau\), and finiteness plus total order force the rigid unit-constant behavior that makes the categorical comparison possible.
Thus the categorical contribution here is not a general categorical reformulation of P\l{}onka sums or of Clifford semigroups, but a pair of equivalence theorems for the morphism theory naturally attached to the canonical rigid local-unit representation.

The local-unit stratification used here is also compatible with the residuated motivation from which such layers originally arose \cite{Jenei2022GroupRepr}.
Nevertheless, the results of this paper are monoid-theoretic and categorical rather than residuation-theoretic: the arguments use only the ordered-monoid structure, the local-unit map, the canonical rigid direct system, and the corresponding morphism classes.
The final remark records that residuated homomorphisms naturally enter the \(\tau\)-compatible category, because they automatically commute with the local-unit map in the relevant setting.

The paper is organized as follows.
Section~\ref{sec:recap} recalls the local-unit terminology, the canonical \(\tau\)-multiplication-coherent decomposition, the rigid direct-system reconstruction, and the structural facts needed later.
Section~\ref{sec:categorical-equivalence} proves the categorical equivalence for strict block morphisms.
Section~\ref{sec:taucomp-categorical-equivalence} proves the intrinsic categorical equivalence for \(\tau\)-compatible homomorphisms and \(\tau\)-compatible collapse morphisms of rigid direct systems.

\section{Preliminaries and the rigid representation theorem}\label{sec:recap}

This section fixes the local-unit terminology and restates the algebraic representation facts used as input for the categorical arguments.
The object-level representation theorem is proved in \cite{JeneiLUARepresentation}; here it is recalled in the form needed for functoriality: canonical \(\tau\)-multiplication-coherent components, their block units, the rigid direct-system reconstruction, and the recovery of canonical components from a rigid system.

A \emph{finite totally ordered monoid} is a structure \(\mathbf M=\langle M,\le,\cdot,e\rangle\) in which \(\langle M,\le\rangle\) is a finite chain, \(\langle M,\cdot,e\rangle\) is a monoid, and multiplication is isotone in both arguments.
For \(x\in M\), put
\[
R(x):=\{z\in M:xz\le x\},
\qquad
L(x):=\{z\in M:zx\le x\}.
\]
The monoid \(\mathbf M\) is \emph{local-unit-aligned} if the greatest elements of \(R(x)\) and \(L(x)\) coincide for every \(x\); their common value is denoted \(\tau_{\mathbf M}(x)\), or simply \(\tau(x)\).
Thus \(x\tau(x)=x=\tau(x)x\), and \(\tau(x)\) is a positive idempotent.
We write
\[
\PosId(\mathbf M)=\{u\in M:u^2=u\text{ and }e\le u\}.
\]
On \(\PosId(\mathbf M)\), multiplication is the maximum operation.

For \(U\subseteq\PosId(\mathbf M)\), the \(U\)-layer is
\[
\layer{U}:=\tau^{-1}(U)=\{x\in M:\tau(x)\in U\}.
\]
For \(u\in\PosId(\mathbf M)\), we write \(\layer{u}\) for the singleton layer \(\layer{\{u\}}\), that is, \(\layer{u}:=\{x\in M:\tau(x)=u\}\).
For \(A,B\subseteq\PosId(\mathbf M)\), define the \emph{\(\tau\)-saturated product} by
\[
\tprod{A}{B}:=\{\tau(xy):x\in\layer{A},\ y\in\layer{B}\}.
\]
A nonempty subset \(A\subseteq\PosId(\mathbf M)\) is \emph{\(\tau\)-stable} if \(\tprod{A}{A}=A\), and a partition of \(\PosId(\mathbf M)\) is \(\tau\)-stable if all its blocks are \(\tau\)-stable.
The \emph{\(\tau\)-cohesive partition} \(\Ctau(\mathbf M)\) is the finest \(\tau\)-stable partition of \(\PosId(\mathbf M)\), and
\[
\Ltau(\mathbf M):=\{\layer{A}:A\in\Ctau(\mathbf M)\}
\]
is the corresponding \emph{\(\tau\)-cohesive decomposition} of \(\mathbf M\).
The monoid \(\mathbf M\) is \emph{\(\tau\)-cohesive} if this partition is trivial, equivalently if \(\Ctau(\mathbf M)=\{\PosId(\mathbf M)\}\).

A partition \(\mathcal P\) of \(\PosId(\mathbf M)\) is \emph{\(\tau\)-multiplication-coherent} if, for all blocks \(A,B\in\mathcal P\), the set \(\tprod{A}{B}\) is contained in a unique block of \(\mathcal P\).
The \emph{\(\tau\)-multiplication-coherent partition} \(\Cm(\mathbf M)\) is the finest such partition, and
\[
\Lm(\mathbf M):=\{\layer{A}:A\in\Cm(\mathbf M)\}
\]
is the corresponding \emph{\(\tau\)-multiplication-coherent decomposition} of \(\mathbf M\).
The monoid \(\mathbf M\) is \emph{\(\tau\)-multiplication-cohesive} if \(\Cm(\mathbf M)=\{\PosId(\mathbf M)\}\).
The partition \(\Ctau(\mathbf M)\) always refines \(\Cm(\mathbf M)\).

If \(A\in\Cm(\mathbf M)\), write \(M_A:=\layer{A}\).
The associated component monoid is
\[
\mathbf M_A=\langle M_A,\le|_{M_A},\cdot|_{M_A\times M_A},e_A\rangle,
\qquad
e_A:=\min A.
\]
We call \(e_A\) the block unit of \(A\).

\begin{theorem}\label{thm:component-local-unit-aligned-monoid}
For every \(A\in\Cm(\mathbf M)\), the component \(\mathbf M_A\) is a finite local-unit-aligned totally ordered monoid, and its local-unit map is the restriction of the ambient one: \(\tau_{\mathbf M_A}(x)=\tau_{\mathbf M}(x)\) for all \(x\in \layer{A}\).
\end{theorem}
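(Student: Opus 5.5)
The plan is to verify directly, from the definition of \(\Cm(\mathbf M)\), that \(M_A=\layer{A}\) is closed under multiplication and that \(e_A=\min A\) is a two-sided unit on it. The order-theoretic requirements are inherited from \(\mathbf M\): \(\langle M_A,\le\rangle\) is a finite subchain, and multiplication is isotone because it is a restriction. The local-unit claim is then obtained by restricting \(R(x)\) and \(L(x)\).

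\emph{Step 1: positive idempotents are fixed by \(\tau\).} Let \(u\in\PosId(\mathbf M)\). Since \(uu=u\), we have \(u\in R(u)\). If \(z>u\), then \(e\le u\) gives \(uz\ge ez=z>u\), so \(z\notin R(u)\). Hence \(u\) is the greatest element of \(R(u)\), and symmetrically of \(L(u)\). So \(\tau(u)=u\), and in particular \(A\subseteq \layer{A}\) for every \(A\subseteq\PosId(\mathbf M)\).

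\emph{Step 2: \(\tprod{A}{A}\subseteq A\).} This is the step I regard as the crux. Coherence of \(\Cm(\mathbf M)\) only says that \(\tprod{A}{A}\) lies inside some unique block \(B\in\Cm(\mathbf M)\), and we must show \(B=A\).
\begin{itemize}
\item Take any \(u\in A\). By Step 1, \(u\in\layer{A}\), and \(\tau(uu)=\tau(u)=u\), so \(u\in\tprod{A}{A}\subseteq B\).
\item Thus \(B\) meets \(A\). Since blocks of a partition are disjoint or equal, \(B=A\).
\end{itemize}
Closure then follows: for \(x,y\in M_A\) we have \(\tau(xy)\in\tprod{A}{A}\subseteq A\), that is, \(xy\in M_A\).

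\emph{Step 3: \(e_A\) is a unit.} By Step 1, \(e_A\in A\subseteq M_A\). Let \(x\in M_A\). Then \(\tau(x)\in A\), so \(e\le e_A\le\tau(x)\), where \(e\le e_A\) holds because \(e_A\) is a positive idempotent. Isotonicity gives
\[
x=xe\le xe_A\le x\tau(x)=x,
\]
so \(xe_A=x\). Symmetrically \(e_Ax=x\). Associativity is inherited, so \(\mathbf M_A\) is a finite totally ordered monoid.

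\emph{Step 4: local units restrict.} For \(x\in M_A\), the component's right set is \(R_{\mathbf M_A}(x)=R(x)\cap M_A\), and similarly \(L_{\mathbf M_A}(x)=L(x)\cap M_A\).
\begin{itemize}
\item The greatest element of \(R(x)\) is \(\tau_{\mathbf M}(x)\), and the greatest element of \(L(x)\) is also \(\tau_{\mathbf M}(x)\).
\item \(\tau_{\mathbf M}(x)\) lies in \(M_A\), because Step 1 gives \(\tau(\tau(x))=\tau(x)\in A\).
\end{itemize}
Hence \(\tau_{\mathbf M}(x)\) is the greatest element of both \(R_{\mathbf M_A}(x)\) and \(L_{\mathbf M_A}(x)\). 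Therefore \(\mathbf M_A\) is local-unit-aligned with \(\tau_{\mathbf M_A}=\tau_{\mathbf M}|_{M_A}\).
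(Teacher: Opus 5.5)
Your proof is correct and complete. Step~1, \(\tau(u)=u\) for \(u\in\PosId(\mathbf M)\), is exactly the fact that drives everything else: it gives \(A\subseteq\layer{A}\), it places \(e_A\) and \(\tau(x)\) inside \(M_A\), and it forces the unique block containing \(\tprod{A}{A}\) to meet \(A\) and hence to equal it. Steps~3 and~4 are the sandwich argument \(x=xe\le xe_A\le x\tau(x)=x\) and the observation \(R_{\mathbf M_A}(x)=R(x)\cap M_A\) with \(\tau_{\mathbf M}(x)\in M_A\). Both are carried out correctly.

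The paper gives no proof of this theorem in the text; it imports the result from \cite{JeneiLUARepresentation}. So there is no in-paper argument to compare against, and your argument stands as a self-contained, elementary verification.

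Your route also shows more than the statement requires. You only use that \(\Cm(\mathbf M)\) is \(\tau\)-multiplication-coherent, never that it is the finest such partition. Hence the conclusion holds for every block of every \(\tau\)-multiplication-coherent partition. Moreover, Step~2 in fact proves \(A\subseteq\tprod{A}{A}\subseteq A\), so every such block is \(\tau\)-stable. The paper's unproved aside that \(\Ctau(\mathbf M)\) refines \(\Cm(\mathbf M)\) follows from this at once.
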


\begin{lemma}\label{lem:intrinsic-tau-on-component}
Let \(A\in\Cm(\mathbf M)\).
Then \(\PosId(\mathbf M_A)=A\).
Moreover, for every \(U\subseteq A\), the \(U\)-layer computed inside \(\mathbf M_A\) equals the ambient \(U\)-layer restricted to \(M_A\), and intrinsic and ambient \(\tau\)-saturated products agree for subsets of \(A\).
\end{lemma}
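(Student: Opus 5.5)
The plan is to derive everything from Theorem~\ref{thm:component-local-unit-aligned-monoid}, which says that \(\mathbf M_A\) is a local-unit-aligned totally ordered monoid whose local-unit map is the restriction of \(\tau_{\mathbf M}\) to \(M_A=\layer{A}\). The first claim to establish is \(\PosId(\mathbf M_A)=A\), and I will prove the two inclusions separately.

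\textbf{Step 1: \(A\subseteq\PosId(\mathbf M_A)\).} Let \(u\in A\). Then \(\tau(u)=u\), since \(u\) is a positive idempotent. Indeed, \(u\cdot u=u\) gives \(u\in R(u)\); and if \(uz\le u\), then \(z\le e z\le uz\le u\), using \(e\le u\). So \(u=\max R(u)\). Hence \(u\in\layer{A}=M_A\), and \(u^2=u\) holds there. It remains to see \(e_A\le u\), which is immediate because \(e_A=\min A\).

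\textbf{Step 2: \(\PosId(\mathbf M_A)\subseteq A\).} Let \(v\in M_A\) with \(v^2=v\) and \(e_A\le v\). By Theorem~\ref{thm:component-local-unit-aligned-monoid}, \(\tau_{\mathbf M}(v)=\tau_{\mathbf M_A}(v)\), and the latter lies in \(\PosId(\mathbf M_A)\), so it suffices to show \(\tau_{\mathbf M_A}(v)=v\). Since \(v\) is idempotent, \(v\in R_{\mathbf M_A}(v)\). If \(z\in M_A\) satisfies \(vz\le v\), then \(z=e_Az\le vz\le v\) by isotonicity. So \(\tau_{\mathbf M_A}(v)=v\), which gives \(v=\tau_{\mathbf M}(v)\in A\). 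This step is the main point: we must avoid needing \(e\le v\) in the ambient monoid, and using only the component unit \(e_A\) handles it.

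\textbf{Step 3: layers and products.} For \(U\subseteq A=\PosId(\mathbf M_A)\), the intrinsic \(U\)-layer is \(\{x\in M_A:\tau_{\mathbf M_A}(x)\in U\}\). By the restriction property this equals \(\{x\in M_A:\tau_{\mathbf M}(x)\in U\}=\layer{U}\cap M_A\), and since \(U\subseteq A\) this is just \(\layer{U}\). For \(U,V\subseteq A\), the intrinsic product \(\tprod{U}{V}\) ranges over \(\tau_{\mathbf M_A}(xy)\) with \(x,y\) in the (equal) layers. Because \(M_A\) is closed under multiplication (it is a submonoid carrier), we have \(xy\in M_A\), so \(\tau_{\mathbf M_A}(xy)=\tau_{\mathbf M}(xy)\) and the two products coincide.

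The only potential subtlety is the use of closure of \(M_A\) under multiplication. This is implicit in Theorem~\ref{thm:component-local-unit-aligned-monoid}, since \(\mathbf M_A\) being a monoid requires the restricted multiplication to land in \(M_A\).
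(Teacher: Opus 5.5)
Your proof is correct. The paper only recalls this lemma from \cite{JeneiLUARepresentation} without proof, and your argument is the natural one it rests on: positive idempotents are $\tau$-fixed, combined with the restriction property in Theorem~\ref{thm:component-local-unit-aligned-monoid} and the closure of $M_A$ under multiplication.

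One small simplification is available. In Step~2 there is no need to avoid ambient positivity: $e_A\in A\subseteq\PosId(\mathbf M)$ gives $e\le e_A\le v$, so $v$ is already a positive idempotent of $\mathbf M$. Your Step~1 computation then gives $\tau_{\mathbf M}(v)=v\in A$ directly.
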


For \(A,B\in\Cm(\mathbf M)\), product coherence determines a block \(A\vee B\): it is the unique block containing the \(\tau\)-values of products of elements from the corresponding layers.

\begin{lemma}\label{lem:block-join-min}
For \(A,B\in\Cm(\mathbf M)\), the block \(A\vee B\) is the unique block containing \(\max\{e_A,e_B\}\).
Consequently, the relation \(A\le_{\min}B\) defined by \(e_A\le e_B\) is a total order on \(\Cm(\mathbf M)\), and \(A\vee B=\max_{\le_{\min}}\{A,B\}\).
\end{lemma}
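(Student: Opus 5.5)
We have to prove Lemma~\ref{lem:block-join-min}: for blocks \(A,B\in\Cm(\mathbf M)\), the block \(A\vee B\) is the unique block containing \(\max\{e_A,e_B\}\); consequently \(\le_{\min}\) is a total order and \(A\vee B=\max_{\le_{\min}}\{A,B\}\).

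The plan is to exhibit one explicit product whose \(\tau\)-value is \(\max\{e_A,e_B\}\). Since \(e_A\) and \(e_B\) are positive idempotents, \(\tau(e_A)=e_A\). To see this, note \(e_A\cdot e_A=e_A\), so \(e_A\in R(e_A)\cap L(e_A)\), giving \(e_A\le\tau(e_A)\). Conversely, \(\tau(e_A)\) is a positive idempotent with \(e_A\tau(e_A)=e_A\). On \(\PosId(\mathbf M)\) multiplication is the maximum, so \(\max\{e_A,\tau(e_A)\}=e_A\), hence \(\tau(e_A)\le e_A\). Thus \(e_A\in\layer{A}\), and likewise \(e_B\in\layer{B}\).

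Their product is \(e_Ae_B=\max\{e_A,e_B\}\), again because multiplication on \(\PosId(\mathbf M)\) is the maximum. This product is idempotent, so the same argument gives \(\tau(e_Ae_B)=\max\{e_A,e_B\}\). Hence \(\max\{e_A,e_B\}\in\tprod{A}{B}\). By \(\tau\)-multiplication-coherence, \(\tprod{A}{B}\) lies in the unique block \(A\vee B\). Since \(\Cm(\mathbf M)\) is a partition, \(A\vee B\) is the unique block containing \(\max\{e_A,e_B\}\). This proves the first claim.

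For the consequences, \(\max\{e_A,e_B\}\) equals either \(e_A\) or \(e_B\), so it lies in \(A\) or in \(B\). Therefore \(A\vee B\in\{A,B\}\): it is the block whose minimum is the larger of the two. The relation \(\le_{\min}\) is reflexive and transitive and total, because \(\le\) is a chain. It is antisymmetric because \(e_A=e_B\) forces \(A\cap B\neq\emptyset\), hence \(A=B\). So \(\le_{\min}\) is a total order, and \(A\vee B=\max_{\le_{\min}}\{A,B\}\).

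The only step needing care is \(\tau(u)=u\) for a positive idempotent \(u\). This relies on \(\tau(u)\) being a positive idempotent and on multiplication being the maximum on \(\PosId(\mathbf M)\), both of which are recalled above.
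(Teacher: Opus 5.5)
Your proof is correct and complete. You show $\tau(u)=u$ for every $u\in\PosId(\mathbf M)$, which gives $e_A\in\layer{A}$ and $e_B\in\layer{B}$. You then get $\tau(e_Ae_B)=\max\{e_A,e_B\}\in\tprod{A}{B}\subseteq A\vee B$, and the facts about $\le_{\min}$ and $A\vee B$ follow exactly as you argue.

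The paper does not prove this lemma in the text; it recalls it in Section~\ref{sec:recap} from \cite{JeneiLUARepresentation}, so there is no in-paper argument to compare with. Your verification uses only the facts recalled there: $x\tau(x)=x$, $\tau(x)\in\PosId(\mathbf M)$, and maximum multiplication on $\PosId(\mathbf M)$. The identity $\tau(u)=u$ that you isolate is the same one the paper invokes later, in the proof of Proposition~\ref{prop:taucomp-homomorphisms-collapse-components}.

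You never use that $\Cm(\mathbf M)$ is the \emph{finest} $\tau$-multiplication-coherent partition. Your argument therefore proves the same conclusion for every $\tau$-multiplication-coherent partition of $\PosId(\mathbf M)$.
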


For \(A\le_{\min}B\), the canonical transition map is \(\rho_{A,B}\colon M_A\to M_B\), \(\rho_{A,B}(x):=xe_B=e_Bx\).
These maps form the canonical direct system \(\mathfrak D(\mathbf M)=((\mathbf M_A)_{A\in\Cm(\mathbf M)},(\rho_{A,B})_{A\le_{\min}B})\).

\begin{theorem}[Canonical direct-system recovery]\label{thm:canonical-direct-system}
Let \(\mathbf M\) be a finite local-unit-aligned totally ordered monoid.
Then \(\mathfrak D(\mathbf M)\) is a finite chain-indexed direct system and recovers the ambient order and multiplication.
More explicitly:
\begin{enumerate}
\item\label{item:direct-disjoint-union} \(M=\bigsqcup_{A\in\Cm(\mathbf M)}M_A\);

\item\label{item:direct-product-recovery} for \(x\in M_A\), \(y\in M_B\), and \(C=A\vee B\), the product in \(\mathbf M\) is
\[
xy=\rho_{A,C}(x)\rho_{B,C}(y),
\]
computed in the component monoid \(\mathbf M_C\);

\item\label{item:direct-order-recovery} if \(A\le_{\min}B\), then \(x\le y\) in \(\mathbf M\) iff \(\rho_{A,B}(x)\le_B y\), while if \(B<_{\min}A\), then \(x\le y\) iff \(x<_A\rho_{B,A}(y)\).
\end{enumerate}
\end{theorem}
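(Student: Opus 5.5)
The plan is to prove the three items of Theorem~\ref{thm:canonical-direct-system} in order, using only the $\tau$-multiplication-coherence of $\Cm(\mathbf M)$, Lemma~\ref{lem:block-join-min}, and elementary facts about positive idempotents in a totally ordered monoid. Before the items themselves, I will check that $\mathfrak D(\mathbf M)$ really is a chain-indexed direct system.

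\textbf{Well-definedness of the system.} By Lemma~\ref{lem:block-join-min}, $\le_{\min}$ is a finite chain. For $A\le_{\min}B$ and $x\in M_A$, I need $\rho_{A,B}(x)=xe_B=e_Bx\in M_B$.
\begin{itemize}
\item Coherence puts $\tau(xe_B)$ in $A\vee B=B$, again by Lemma~\ref{lem:block-join-min}.
\item The equality $xe_B=e_Bx$ should follow from $x=\tau(x)x$. The idea is that $e_B\ge e_A$ dominates $\tau(x)$ in the relevant sense, so $e_B$ behaves as the unit of $M_B$ from both sides. I expect this to be a short comparison argument through $\tau$; if it is not immediate, I will take it from the representation paper.
\item $\rho_{A,B}$ is a monoid homomorphism: $e_A\mapsto e_Ae_B=\max(e_A,e_B)=e_B$, and $(xy)e_B=xe_Bye_B$ because $e_B$ is idempotent and central against these elements.
\item Functoriality is $\rho_{B,C}\rho_{A,B}(x)=xe_Be_C=xe_C$, using $e_Be_C=\max(e_B,e_C)=e_C$.
\item Isotonicity of each $\rho_{A,B}$ is inherited from isotonicity of multiplication.
\end{itemize}

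\textbf{Items (1) and (2).} Item (1) is immediate, since the blocks of $\Cm(\mathbf M)$ partition $\PosId(\mathbf M)$ and $\tau$ is a total function into $\PosId(\mathbf M)$. For item (2), let $x\in M_A$, $y\in M_B$ and $C=A\vee B$. Coherence gives $xy\in M_C$, so $xy=e_C\,xy\,e_C$ because $e_C$ is the unit of $\mathbf M_C$. It then remains to rewrite $e_Cxye_C$ as $(xe_C)(e_Cy)$. Since one of $e_A,e_B$ equals $e_C$, one factor already absorbs $e_C$. The key identity to establish is that $e_C$ can be inserted between $x$ and $y$ without changing the product. I would obtain it by isotonicity, sandwiching between $xy$ (using $e\le e_C$) and $xe_Cy\le xy\cdot{}$something. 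More concretely, if $C=B$ then $xy=xe_By$, because $y=e_By$ once $e_B\le\tau(y)$ is known and $\tau(y)y=y$ is used. So the argument reduces to the identity $e_By=y$ for $y\in M_B$, which again comes from $e_B$ being the unit of $\mathbf M_B$.

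\textbf{Item (3), the main obstacle.} I expect the order recovery to be the hardest step, because it mixes the order of different layers. Take $A\le_{\min}B$, $x\in M_A$, $y\in M_B$. The two directions go as follows.
\begin{itemize}
\item If $x\le y$, then $xe_B\le ye_B=y$ by isotonicity.
\item If $xe_B\le y$, then since $x\le xe_B$ (from $e\le e_B$), we get $x\le y$.
\end{itemize}
So the first half is actually easy. The difficulty is the strict inequality when $B<_{\min}A$: I must show that $y\ge x$ implies $ye_A\ne x$ unless $x\le y$ holds strictly in the right way. Equivalently, I must exclude $ye_A=x$ together with $y<x$. First I handle the non-strict part: if $x\le y$ then $x=xe_A\le ye_A$, and conversely. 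Then I treat equality: if $ye_A=x$, I claim $y<x$. Otherwise $y\ge x$ would force $y\ge ye_A\ge y$, so $y=ye_A$. Then $e_A\in R(y)$, giving $\tau(y)\ge e_A>\max B$ and contradicting $\tau(y)\in B$. This last inequality requires that every element of $B$ lies below $e_A$ when $B<_{\min}A$, that is, that the blocks of $\Cm(\mathbf M)$ are intervals in $\PosId(\mathbf M)$. I would try to derive this convexity from coherence together with Lemma~\ref{lem:block-join-min}: if $u\in B$ and $u>e_A$, then $\tau(e_Au)=u$ lies in $A\vee B=A$, which contradicts $u\in B$.
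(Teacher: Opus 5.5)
The paper does not prove this theorem itself; it is recalled from the companion representation paper. Your direct verification is therefore the natural route, and items (1)--(3) are essentially right. There is one genuine hole, in the well-definedness paragraph: you never establish \(xe_B=e_Bx\) for \(x\in M_A\), \(A\le_{\min}B\). Yet your homomorphism check (``\(e_B\) is idempotent and central against these elements'') and your choice of sides \((xe_C)(e_Cy)\) in item (2) both depend on it. The heuristic you offer does not supply it. First, \(e_B\ge e_A\) compares \(e_B\) only with \(\min A\), whereas \(\tau(x)\) may be any element of \(A\); you would need \(\max A<e_B\), which is the convexity you only derive at the end of item (3). Second, even \(\tau(x)<e_B\) only yields \(xe_B>x\) and \(e_Bx>x\), not that these two products coincide. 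Deferring the point to the representation paper leaves the direct-system claim unproved.

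The repair uses only facts already in your proposal. For \(z\in M_B\) one has \(e\le e_B\le\tau(z)\), so \(z\le ze_B\le z\tau(z)=z\), and likewise \(e_Bz=z\). Thus \(e_B\) is a two-sided unit on \(M_B\). By coherence and Lemma~\ref{lem:block-join-min}, \(\tau(xe_B)\in\tprod{A}{B}\) and \(\tau(e_Bx)\in\tprod{B}{A}\) both lie in the block containing \(\max\{e_A,e_B\}=e_B\). So \(xe_B,e_Bx\in M_B\), and therefore
\[
xe_B=e_B(xe_B)=(e_Bx)e_B=e_Bx .
\]
Then \((xe_B)(ye_B)=x(e_By)e_B=x(ye_B)e_B=xye_B\), and the rest of your verification goes through. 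You still need to cite Theorem~\ref{thm:component-local-unit-aligned-monoid} for the constituents themselves. The stronger unit-constancy of Corollary~\ref{cor:canonical-transition-maps-unit-constant} is not needed here.

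Two smaller points in item (3). First, the words ``and conversely'' are literally false: \(x\le ye_A\) does not give \(x\le y\), and that is exactly the equality case you then exclude. What you need is \(x<ye_A\Rightarrow x\le y\), which holds because \(y<x\) would give \(ye_A\le xe_A=x\). Second, the convexity detour is avoidable. If \(y=ye_A\), then \(\tau(y)=\tau(ye_A)\in\tprod{B}{A}\), and this lies in \(A\) by Lemma~\ref{lem:block-join-min}, since \(B<_{\min}A\). That contradicts \(\tau(y)\in B\).
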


A \emph{finite chain-indexed direct system} of finite local-unit-aligned totally ordered monoids is a family
\[
\mathbf D=\bigl((\mathbf M_i)_{i\in I},(\rho_{i,j})_{i\le j}\bigr),
\qquad
\mathbf M_i=\langle M_i,\le_i,\cdot_i,e_i\rangle
\]
indexed by a nonempty finite chain \((I,\le)\), where each \(\mathbf M_i\) is a finite local-unit-aligned totally ordered monoid, each \(\rho_{i,j}\colon\mathbf M_i\to\mathbf M_j\) is an isotone unital monoid homomorphism, \(\rho_{i,i}=\mathrm{id}_{M_i}\), and \(\rho_{j,k}\circ\rho_{i,j}=\rho_{i,k}\) whenever \(i\le j\le k\).
For an abstract direct system, the external union \(\bigsqcup_{i\in I}M_i\) is understood in the tagged-disjoint-copy sense.

The \emph{directed lexicographic order} on \(\bigsqcup_{i\in I}M_i\) is the reflexive closure of the strict order defined as follows: for \(x\in M_i\) and \(y\in M_j\), putting \(m:=\max\{i,j\}\),
\[
x<y
\quad\Longleftrightarrow\quad
\rho_{i,m}(x)<_m\rho_{j,m}(y)
\ \text{or}\
\bigl(\rho_{i,m}(x)=\rho_{j,m}(y)\text{ and }i<j\bigr).
\]
A direct system is \emph{unit-constant} if every proper transition map \(\rho_{i,j}\), \(i<j\), sends every element of \(M_i\) to the identity \(e_j\) of the target component.
In the finite local-unit-aligned totally ordered setting, the canonical system is unit-constant.
In the categorical sections below, a rigid direct system is the unit-constant case with \(\tau\)-multiplication-cohesive constituents.

Conversely, if
\[
\mathbf D=\bigl((\mathbf M_i)_{i\in I},(\rho_{i,j})_{i\le j}\bigr)
\]
is a unit-constant finite chain-indexed direct system of finite local-unit-aligned totally ordered monoids, then the reconstructed monoid \(\mathfrak M(\mathbf D)\) has universe \(\bigsqcup_{i\in I}M_i\), directed lexicographic order, global identity \(e_{\min I}\), and multiplication
\[
xy:=\rho_{i,k}(x)\cdot_k\rho_{j,k}(y)
\qquad
(x\in M_i,\ y\in M_j,\ k:=\max\{i,j\}).
\]
Each constituent \(M_i\) is an ordered subsemigroup of \(\mathfrak M(\mathbf D)\), with its original multiplication and order, and the ambient local-unit map restricts to the intrinsic local-unit map of \(\mathbf M_i\).
No \(\tau\)-multiplication-cohesiveness assumption on the constituents is needed for this reconstruction; that hypothesis is only needed when one wants the construction-level decomposition to coincide with the canonical \(\tau\)-multiplication-coherent decomposition of the resulting monoid.

\begin{remark}[Working forms of the directed lexicographic order]\label{rem:directed-lex-order-working-forms}
In \(\mathfrak M(\mathbf D)\), if \(x\in M_i\) and \(y\in M_j\), then for \(i\le j\) one has \(x\le y\) iff \(\rho_{i,j}(x)\le_j y\), while for \(j<i\) one has \(x\le y\) iff \(x<_i\rho_{j,i}(y)\).
Equivalently, for strict order, \(i<j\) gives \(x<y\) iff \(\rho_{i,j}(x)\le_j y\), and \(j<i\) gives \(x<y\) iff \(x<_i\rho_{j,i}(y)\).
\end{remark}

\begin{corollary}[Rigidity of canonical transition maps]\label{cor:canonical-transition-maps-unit-constant}
If \(A<_{\min}B\) in \(\Cm(\mathbf M)\), then \(\rho_{A,B}(x)=e_B\) for all \(x\in M_A\).
\end{corollary}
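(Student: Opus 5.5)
The plan is to fix \(A<_{\min}B\) and \(x\in M_A\), put \(u:=\tau(x)\in A\) and \(y:=\rho_{A,B}(x)=xe_B=e_Bx\), and show \(y=e_B\) by proving \(y\le e_B\) and \(e_B\le y\) separately.

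\textbf{Preparatory facts.} First, \(y\in M_B\). Indeed \(\tau(e_B)=e_B\in B\), so \(e_B\in M_B\), and Theorem~\ref{thm:canonical-direct-system} puts \(xe_B\) in the layer of \(A\vee B=B\). Next, \(u<e_B\). The product \(ue_B=\max\{u,e_B\}\) is a positive idempotent whose \(\tau\)-value lies in \(B\), by \(\tau\)-multiplication coherence and Lemma~\ref{lem:block-join-min}. If \(u\ge e_B\), this would force \(u\in B\), contradicting \(u\in A\) and \(A\ne B\). Consequently \(ue_B=e_B\), so \(\rho_{A,B}(u)=e_B\) and \(y=x e_B=xue_B\). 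Since \(e_B>u=\max R(x)\), we also get \(xe_B>x\), and likewise \(e_Bx>x\).

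\textbf{Step 1: \(y\le e_B\).} I would argue by isotonicity. If \(x\le u\), then \(y=xe_B\le ue_B=e_B\). If \(x>u\), I would use the order-recovery clause of Theorem~\ref{thm:canonical-direct-system}: since \(A<_{\min}B\), \(x\le z\) in \(\mathbf M\) is equivalent to \(\rho_{A,B}(x)\le_B z\) for \(z\in M_B\). This relates \(y\) to elements of \(M_B\) lying above \(x\), in particular to \(e_B\) and to \(\tau(y)\). The aim is to show that \(y>e_B\) would produce an element \(z\in R(x)\) with \(z>u\), contradicting the maximality of \(u=\tau(x)\).

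\textbf{Step 2: \(e_B\le y\).} Suppose \(y<e_B\). I would pass to an idempotent power: by finiteness some \(x^k=:f\) is idempotent, and \(f\in M_A\) because layers of \(\Cm\)-blocks are closed under products, by Lemma~\ref{lem:intrinsic-tau-on-component}. Then \(\rho_{A,B}(f)=y^k\) is an idempotent of \(M_B\) with \(y^k\le y<e_B\). Since \(\rho_{A,B}\) is a unital homomorphism, the image \(\rho_{A,B}(M_A)\) is a submonoid of \(\mathbf M_B\) containing elements strictly below \(e_B\). The plan is to play \(\tau(y)\in B\), which satisfies \(\tau(y)\ge e_B\), against \(\tau(x)=u<e_B\) to reach a contradiction.

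\textbf{Main obstacle.} I expect the hardest step to be excluding \(y\ne e_B\) when \(y\) has the same \(\tau\)-value as \(e_B\), i.e. \(y\in\layer{e_B}\). Neither the order nor the \(\tau\)-values alone rule this out. There I would use the fineness of \(\Cm(\mathbf M)\). I would try to show that a nonconstant \(\rho_{A,B}\) allows \(A\) and \(B\) to be merged, or \(B\) to be split, into a coarser or finer \(\tau\)-multiplication-coherent partition, contradicting the definition of \(\Cm(\mathbf M)\) as the finest such partition. If that fails, the fallback is a direct computation with \(\tau(xe_B)\) via the local-unit alignment \(R(\cdot)=L(\cdot)\).
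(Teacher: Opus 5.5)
Your preparatory facts are correct: \(y=xe_B\in M_B\), every element of \(A\) lies strictly below \(e_B\) (so \(ue_B=e_B\)), and \(xe_B>x\). However, the two inequalities that make up the statement are never proved. In Step~1 the case \(x>u\) stops at ``the aim is to show'', and Step~2 stops at ``the plan is to play \(\tau(y)\) against \(u\)''. The ``main obstacle'' paragraph also looks in the wrong place. Merging \(A\) with \(B\) would produce a \emph{coarser} coherent partition, which contradicts nothing, and you give no mechanism for splitting \(B\). No fineness argument is needed beyond what you already used. Coherence suffices: \(\tprod{A}{A}\subseteq A\) makes \(M_A\) closed under products, and \(\tprod{A}{B}\subseteq B\) gave you \(A<e_B\) elementwise. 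Note also that each inequality is trivial for one sign of \(x\): \(x\ge e\) gives \(xe_B\ge e_B\), and \(x\le e\) gives \(xe_B\le e_B\).

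The missing idea is an idempotent-power sandwich.

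\emph{Case \(x>e\).} The powers of \(x\) increase and stabilize at an idempotent \(p=x^k\ge x\). Since \(p\in M_A\) is a positive idempotent, \(p=\tau(p)\in A\). Hence \(p<e_B\), and \(xe_B\le pe_B=e_B\).

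\emph{Case \(x<e\).} The powers decrease to an idempotent \(q=x^k\le x\) with \(q\in M_A\), hence \(\tau(q)<e_B\). Put \(p:=qe_B\), so \(q\le p\le e_B\). If \(p<e\), then \(p=q(qe_B)=qp\le qe=q\). This forces \(qe_B=q\), i.e.\ \(e_B\in R(q)\), contradicting \(\tau(q)<e_B\). Hence \(p\ge e\), and then \(p=pe_B\ge ee_B=e_B\). So \(qe_B=e_B\), and \(e_B=qe_B\le xe_B\le ee_B=e_B\).

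You were one line from this in Step~2. Your idempotent \(\rho_{A,B}(x^k)=x^ke_B<e_B\) cannot be positive, since a positive idempotent of \(M_B\) lies in \(B\) and is therefore \(\ge e_B\). If it is negative, then \(x^ke_B=x^k(x^ke_B)\le x^k\) forces \(x^ke_B=x^k\), which is the same contradiction with \(\tau(x^k)<e_B\).
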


\begin{corollary}[The canonical components are \(\tau\)-multiplication-cohesive]\label{cor:canonical-components-already-terminal-direct}
For every \(A\in\Cm(\mathbf M)\), the component monoid \(\mathbf M_A\) is \(\tau\)-multiplication-cohesive.
\end{corollary}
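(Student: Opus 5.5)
The plan is to argue by minimality of \(\Cm(\mathbf M)\). By Lemma~\ref{lem:intrinsic-tau-on-component}, \(\PosId(\mathbf M_A)=A\), and intrinsic layers and \(\tau\)-saturated products computed in \(\mathbf M_A\) agree with the ambient ones for subsets of \(A\). Suppose \(\mathcal P\) is an intrinsic \(\tau\)-multiplication-coherent partition of \(A=\PosId(\mathbf M_A)\). Form the partition
\[
\mathcal Q:=\bigl(\Cm(\mathbf M)\setminus\{A\}\bigr)\cup\mathcal P
\]
of \(\PosId(\mathbf M)\). I will show that \(\mathcal Q\) is \(\tau\)-multiplication-coherent in \(\mathbf M\). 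Since \(\Cm(\mathbf M)\) is the finest such partition, \(\Cm(\mathbf M)\) must then refine \(\mathcal Q\). This forces \(\mathcal P=\{A\}\), so \(\Cm(\mathbf M_A)=\{A\}\), which is the claim.

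Coherence of \(\mathcal Q\) will be checked block pair by block pair. Since blocks are pairwise disjoint and \(\tprod{X}{Y}\) is nonempty, "contained in a unique block" reduces to "contained in some block".

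\begin{enumerate}
\item For two blocks \(B,B'\) of \(\Cm(\mathbf M)\) other than \(A\), \(\tprod{B}{B'}\) lies in \(B\vee B'\). If \(B\vee B'\neq A\), that block is still in \(\mathcal Q\). If \(B\vee B'=A\), then by Lemma~\ref{lem:block-join-min} one of \(B,B'\) would equal \(A\), which is excluded. So this case never occurs.
\item For \(P_1,P_2\in\mathcal P\), the ambient product \(\tprod{P_1}{P_2}\) equals the intrinsic one by Lemma~\ref{lem:intrinsic-tau-on-component}. It therefore lies in a block of \(\mathcal P\) by intrinsic coherence.
\item For \(P_1\in\mathcal P\) and \(B\in\Cm(\mathbf M)\setminus\{A\}\) (in either order of the factors), we have \(\tprod{P_1}{B}\subseteq\tprod{A}{B}\subseteq A\vee B\).
\begin{itemize}
\item If \(A<_{\min}B\), then \(A\vee B=B\in\mathcal Q\).
\item If \(B<_{\min}A\), take \(x\in\layer{P_1}\) and \(y\in M_B\). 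By Theorem~\ref{thm:canonical-direct-system}\eqref{item:direct-product-recovery} and Corollary~\ref{cor:canonical-transition-maps-unit-constant}, \(xy=x\,\rho_{B,A}(y)=xe_A\). Since \(e\le e_A=\min A\le\tau(x)\), isotonicity gives \(x=xe\le xe_A\le x\tau(x)=x\). Hence \(xy=x\), and symmetrically \(yx=x\). Thus \(\tau(xy)=\tau(x)\in P_1\), so the product set lies inside the single block \(P_1\).
\end{itemize}
\end{enumerate}

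The main obstacle is this mixed case with \(B<_{\min}A\). A priori the products could spread over several blocks of \(\mathcal P\), and rigidity (unit-constancy of \(\rho_{B,A}\)) is exactly what prevents it. I also need that Corollary~\ref{cor:canonical-transition-maps-unit-constant} is available independently of the present statement, which holds since it is stated earlier. The remaining bookkeeping (disjointness, nonemptiness of the products, and the notion of "finest" as refining every coherent partition) is routine.
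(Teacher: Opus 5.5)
Your proof is correct. The paper does not prove this statement itself: it is one of the facts recalled in Section~\ref{sec:recap} from \cite{JeneiLUARepresentation}, so there is no in-paper argument to compare against. It is stated immediately after Corollary~\ref{cor:canonical-transition-maps-unit-constant}, so your reliance on rigidity matches the order in which these facts are assembled and introduces no circularity.

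The structure of your argument works. Splicing an intrinsic $\tau$-multiplication-coherent partition $\mathcal P$ of $A=\PosId(\mathbf M_A)$ into $\Cm(\mathbf M)$ gives a $\tau$-multiplication-coherent partition $\mathcal Q$ of $\PosId(\mathbf M)$. Since $\mathcal Q$ refines $\Cm(\mathbf M)$ by construction, the conclusion $\mathcal P=\{A\}$ follows under either reading of \emph{finest}: as the minimum, or merely as a minimal element of the refinement order.

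Your case split is exhaustive:
\begin{itemize}
\item Two blocks outside $A$ never have join $A$, by Lemma~\ref{lem:block-join-min}.
\item Two blocks of $\mathcal P$ are handled by Lemma~\ref{lem:intrinsic-tau-on-component}.
\item The mixed case $B<_{\min}A$ is exactly where unit-constancy of $\rho_{B,A}$ is indispensable. Without it, the products $x\rho_{B,A}(y)$ could spread across several blocks of $\mathcal P$.
\end{itemize}
Your computation $xy=xe_A=x=e_Ax$ in that mixed case is right; it also follows at once from $\rho_{A,A}=\mathrm{id}_{M_A}$. The only routine point you leave implicit is that the saturated products are nonempty. This follows from $\tau(u)=u$ for every $u\in\PosId(\mathbf M)$, which gives $u\in\layer{u}$.
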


\begin{theorem}[Canonical rigid direct-system representation theorem]\label{thm:representation}
The following hold.
\begin{enumerate}
\taggeditem{\textup{(i)}}{item:representation-canonical-direct-system}
Every finite local-unit-aligned totally ordered monoid \(\mathbf M\) admits the canonical unit-constant direct system
\[
\mathfrak D(\mathbf M)
=
\bigl((\mathbf M_A)_{A\in\Cm(\mathbf M)},(\rho_{A,B})_{A\le_{\min}B}\bigr)
\]
of \(\tau\)-multiplication-cohesive finite local-unit-aligned totally ordered monoids over the finite chain \((\Cm(\mathbf M),\le_{\min})\).

\taggeditem{\textup{(ii)}}{item:representation-reconstruction-from-direct-system}
Conversely, every unit-constant finite chain-indexed direct system
\[
\mathbf D=\bigl((\mathbf M_i)_{i\in I},(\rho_{i,j})_{i\le j}\bigr)
\]
of finite local-unit-aligned totally ordered monoids, with no cohesiveness assumption on the constituents, reconstructs a finite local-unit-aligned totally ordered monoid \(\mathfrak M(\mathbf D)\).
Its universe is the disjoint union of the constituents, its order is the directed lexicographic order, and its multiplication is
\[
xy:=\rho_{i,k}(x)\cdot_k\rho_{j,k}(y)
\qquad
(x\in M_i,\ y\in M_j,\ k:=\max\{i,j\}).
\]

\taggeditem{\textup{(iii)}}{item:representation-canonical-recovery}
For every finite local-unit-aligned totally ordered monoid \(\mathbf M\), the canonical comparison \(\mathfrak M(\mathfrak D(\mathbf M))\cong\mathbf M\) is an isomorphism of ordered monoids.

\taggeditem{\textup{(iv)}}{item:representation-direct-system-recovery}
If \(\mathbf D=\bigl((\mathbf M_i)_{i\in I},(\rho_{i,j})_{i\le j}\bigr)\) is a unit-constant finite chain-indexed direct system of finite local-unit-aligned totally ordered monoids, then the canonical decomposition \(\Lm(\mathfrak M(\mathbf D))\) refines the construction-level partition \(\{M_i:i\in I\}\).
If, moreover, every constituent monoid \(\mathbf M_i\) is \(\tau\)-multiplication-cohesive, then the construction-level layer decomposition is exactly the canonical \(\tau\)-multiplication-coherent layer decomposition.
More precisely, putting
\[
P_i:=\PosId(\mathfrak M(\mathbf D))\cap M_i,
\]
one has
\[
\Cm(\mathfrak M(\mathbf D))=\{P_i:i\in I\},
\qquad
\Lm(\mathfrak M(\mathbf D))=\{M_i:i\in I\}.
\]
In this case \(i\mapsto P_i\) is an order isomorphism from \(I\) onto \(\Cm(\mathfrak M(\mathbf D))\), the canonical component monoid indexed by \(P_i\) is exactly \(\mathbf M_i\), its block unit is \(e_i\), and, for \(i\le j\), the recovered canonical transition map
\[
\widehat\rho_{P_i,P_j}\colon M_i\to M_j
\]
coincides with the original transition map \(\rho_{i,j}\).
Thus the recovered canonical direct system is canonically isomorphic to the original one: \(\mathfrak D(\mathfrak M(\mathbf D))\cong\mathbf D\).
\end{enumerate}
\end{theorem}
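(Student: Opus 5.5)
The theorem is a synthesis of the object-level facts already recorded in Section~\ref{sec:recap}, so I would prove it item by item. For \ref{item:representation-canonical-direct-system}, I take \(\Cm(\mathbf M)\) with the total order \(\le_{\min}\) of Lemma~\ref{lem:block-join-min} as index chain. By Theorem~\ref{thm:component-local-unit-aligned-monoid} each \(\mathbf M_A\) is a finite local-unit-aligned totally ordered monoid, and by Corollary~\ref{cor:canonical-components-already-terminal-direct} it is \(\tau\)-multiplication-cohesive. It remains to check the direct-system axioms and unit-constancy.

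\begin{itemize}
\item \(\rho_{A,A}(x)=xe_A=x\), since \(e_A\le\tau(x)\) and both are idempotents under max, so \(x e_A = x\tau(x)e_A = x\tau(x)=x\).
\item For \(A<_{\min}B\), Corollary~\ref{cor:canonical-transition-maps-unit-constant} gives \(\rho_{A,B}\equiv e_B\). This is automatically an isotone unital homomorphism, because \(e_B^2=e_B\).
\item Composition: for \(A\le B\le C\) with at least one inequality strict, both sides of \(\rho_{B,C}\circ\rho_{A,B}=\rho_{A,C}\) are constant \(e_C\) (using \(\rho_{B,C}(e_B)=e_B e_C=e_C\) when \(B<C\)). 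If \(A=B=C\) the identity case applies.
\end{itemize}

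For \ref{item:representation-reconstruction-from-direct-system}, I would verify the structure directly from the unit-constant formulas. Totality of the directed lexicographic order follows from Remark~\ref{rem:directed-lex-order-working-forms}. For associativity, take \(x\in M_i\), \(y\in M_j\), \(z\in M_k\) and set \(m=\max\{i,j,k\}\). Both bracketings reduce to the product in \(\mathbf M_m\) of those factors lying in \(M_m\), with the lower ones replaced by \(e_m\); this uses unit-constancy and the fact that each \(\rho\) is a homomorphism. The identity is \(e_{\min I}\), since \(\rho_{\min I,k}(e_{\min I})=e_k\).

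Isotonicity needs a case split on the relative position of indices, again via the working forms of the order. Local-unit alignment: for \(x\in M_i\), I would show \(R(x)\) and \(L(x)\) in \(\mathfrak M(\mathbf D)\) both have greatest element \(\tau_i(x)\). An element \(z\in M_j\) with \(j>i\) gives \(xz=e_j z\), and one has to check that this exceeds \(x\) in the directed order. Elements with \(j<i\) act as \(e_i\), so they lie in \(R(x)\) and \(L(x)\) and are below \(\tau_i(x)\), which dominates \(e_i\).

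Item \ref{item:representation-canonical-recovery} follows from Theorem~\ref{thm:canonical-direct-system}. The identity map on \(M=\bigsqcup M_A\) transports multiplication by item~\ref{item:direct-product-recovery} and order by item~\ref{item:direct-order-recovery}, which matches the working forms of the directed lexicographic order.

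Item \ref{item:representation-direct-system-recovery} is the main obstacle. First, \(\PosId(\mathfrak M(\mathbf D))\cap M_i=\PosId(\mathbf M_i)\), and the ambient \(\tau\) restricts to \(\tau_i\) (shown above). So the partition \(\{P_i\}\) is \(\tau\)-multiplication-coherent, because \(\tprod{P_i}{P_j}\subseteq P_{\max\{i,j\}}\). Minimality of \(\Cm\) then gives refinement.

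For equality under cohesiveness, let \(\mathcal Q\) be any coherent partition and restrict it to \(P_i\). Products of elements of \(M_i\) stay in \(M_i\), and the ambient \(\tau\)-saturated products on subsets of \(P_i\) agree with the intrinsic ones. Hence the restriction is a coherent partition of \(\PosId(\mathbf M_i)\). This is the delicate step: the restricted classes must still be contained in single blocks, and uniqueness must hold within \(P_i\). Cohesiveness of \(\mathbf M_i\) then forces \(P_i\) to lie in a single block of \(\mathcal Q\), so \(\Cm=\{P_i\}\).

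The rest is bookkeeping:
\begin{itemize}
\item \(\min P_i=e_i\), since \(e_i\) is the least positive idempotent of \(\mathbf M_i\).
\item The map \(i\mapsto P_i\) is an order isomorphism, because \(e_i<e_j\) in the directed order iff \(i<j\).
\item The recovered map is \(\widehat\rho_{P_i,P_j}(x)=xe_j=\rho_{i,j}(x)e_j=\rho_{i,j}(x)\).
\end{itemize}
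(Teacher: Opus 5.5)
Apart from one step, your plan is sound. Note that the paper does not prove this theorem; it recalls it from \cite{JeneiLUARepresentation}, so there is no in-paper proof to compare with. Your assembly of (i) and (iii) from the recalled lemmas is exactly how the paper uses them.

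\textbf{The step that fails} is the local-unit part of (ii). For \(x\in M_i\) and \(z\in M_j\) with \(j>i\) you have \(xz=zx=z\), and you propose to check that this exceeds \(x\). That is false in general. By the reversed working form, \(z\le x\) iff \(z<_j\rho_{i,j}(x)=e_j\), so every \(z\in M_j\) with \(z<_je_j\) lies in \(R(x)\cap L(x)\). Such elements exist, because the identity of a constituent need not be its least element. Take \(I=\{0<1\}\), \(M_0=\{e_0\}\), and \(M_1=\{a<e_1\}\) with \(a^2=a\). Then \(a<e_0<e_1\) in \(\mathfrak M(\mathbf D)\), and \(e_0a=a<e_0\), so \(a\in R(e_0)\).

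The repair is a split. For \(j>i\), an element \(z\in M_j\) lies in \(R(x)\) (equivalently in \(L(x)\)) iff \(z<_je_j\). Every such \(z\) satisfies \(z<\tau_i(x)\), because \(z<_je_j=\rho_{i,j}(\tau_i(x))\). Only the elements with \(e_j\le_j z\) lie above \(x\) and are excluded. With this correction you get \(\max R(x)=\max L(x)=\tau_i(x)\). This is also what you reuse in (iv) to get \(\layer{P_i}=M_i\).

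\textbf{Two smaller points.}

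First, totality is not the only order axiom; you also need transitivity and antisymmetry. Put \(M_i^-:=\{x\in M_i:x<_ie_i\}\) and \(M_i^+:=\{x\in M_i:e_i\le_i x\}\). Under unit-constancy, the working forms say that whenever \(i<j\), \(M_j^-\) lies entirely below \(M_i\) and \(M_j^+\) lies entirely above \(M_i\). So for \(I=\{i_0<\dots<i_n\}\) the universe is the chain of intervals
\(M_{i_n}^-<\dots<M_{i_0}^-<M_{i_0}^+<\dots<M_{i_n}^+\).
This gives transitivity and antisymmetry at once, and it makes both your isotonicity case split and the corrected local-unit computation mechanical.

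Second, your restriction argument in (iv) is correct. For blocks \(Q,Q'\) of a coherent partition \(\mathcal Q\), put \(U=Q\cap P_i\) and \(V=Q'\cap P_i\). Then \(\tprod{U}{V}\) is nonempty and lies in \(\tprod{Q}{Q'}\cap P_i\), hence in a single restricted block. However, both of your appeals to minimality use that \(\Cm\) refines \emph{every} coherent partition, not merely that it is minimal. This holds because the common refinement of two coherent partitions is again coherent; it is worth saying so explicitly.
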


\section{A categorical equivalence: strict block morphisms}\label{sec:categorical-equivalence}

The next two sections recast the representation theorem in categorical language, first in a strict block form and then in an intrinsic \(\tau\)-compatible form.
The first formulation records the object-level reconstruction theorem as an equivalence of categories for strict block morphisms: each canonical component is sent into a canonical component, and the distinguished block unit is preserved.
Thus this section gives the cleanest categorical expression of the object-level reconstruction theorem.
The \(\tau\)-compatible formulation developed afterwards is less rigid on block units and more intrinsic to the local-unit map.

\begin{definition}[The strict block category]\label{def:cat-lua-blk-finite}
Let \(\mathbf{LUA}^{\mathrm{blk}}_{\mathrm{fin}}\) have as objects all finite local-unit-aligned totally ordered monoids.

Let \(\mathbf M\) and \(\mathbf N\) be such monoids.
A \emph{strict block morphism} \(f\colon \mathbf M\to \mathbf N\) is an isotone unital monoid homomorphism such that, for every \(A\in \Cm(\mathbf M)\), there exists a unique block \(\sigma_f(A)\in \Cm(\mathbf N)\) satisfying
\[
f(M_A)\subseteq N_{\sigma_f(A)}
\qquad\text{and}\qquad
f(e_A)=e_{\sigma_f(A)}.
\]
Composition is ordinary composition of maps.
\end{definition}

The uniqueness of the block is automatic once existence holds, since \(M_A\) is nonempty and the canonical components of \(\mathbf N\) are pairwise disjoint.

\begin{lemma}\label{lem:block-preserving-morphism-induces-sigma}
Let \(f\colon \mathbf M\to \mathbf N\) be a strict block morphism.
Then the induced map \(\sigma_f\colon \Cm(\mathbf M)\to \Cm(\mathbf N)\) is isotone.
\end{lemma}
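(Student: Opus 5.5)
Since \(\le_{\min}\) on \(\Cm(\mathbf M)\) is defined through the block units (Lemma~\ref{lem:block-join-min}), the plan is to compare block units directly. Let \(A\le_{\min}B\) in \(\Cm(\mathbf M)\), so \(e_A\le e_B\) in \(\mathbf M\). Because \(f\) is isotone, \(f(e_A)\le f(e_B)\) in \(\mathbf N\). The strict block condition gives \(f(e_A)=e_{\sigma_f(A)}\) and \(f(e_B)=e_{\sigma_f(B)}\). Hence \(e_{\sigma_f(A)}\le e_{\sigma_f(B)}\), which is exactly \(\sigma_f(A)\le_{\min}\sigma_f(B)\) in \(\Cm(\mathbf N)\).

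The only points to check are that the ingredients are well defined. First, \(\sigma_f\) is a genuine function, because uniqueness of the target block is automatic: the components \(N_C\), \(C\in\Cm(\mathbf N)\), are pairwise disjoint by Theorem~\ref{thm:canonical-direct-system}\eqref{item:direct-disjoint-union}, and \(M_A\ne\emptyset\) since it contains \(e_A\). Second, \(e_A\in A\subseteq\PosId(\mathbf M)\), and \(\tau(e_A)=e_A\) because \(e_A\) is a positive idempotent that is its own greatest local unit. Third, \(e_{\sigma_f(A)}\) is the block unit of a block of \(\Cm(\mathbf N)\), and by Lemma~\ref{lem:block-join-min} the relation \(\le_{\min}\) on \(\Cm(\mathbf N)\) is a total order defined by comparing block units in \(\mathbf N\).

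This argument has no real obstacle. The one subtlety is that \(\sigma_f\) need not be injective, so it is only isotone and not strictly isotone: if \(\sigma_f(A)=\sigma_f(B)\), the inequality \(e_{\sigma_f(A)}\le e_{\sigma_f(B)}\) is simply an equality. Isotonicity therefore follows immediately from order-preservation of \(f\) and preservation of block units, and no multiplicative argument is needed.
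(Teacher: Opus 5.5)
Your proof is correct and is essentially the paper's argument: translate \(A\le_{\min}B\) into \(e_A\le e_B\) via Lemma~\ref{lem:block-join-min}, apply the isotone map \(f\), rewrite \(f(e_A)\) and \(f(e_B)\) as the target block units using the strict block condition, and read off \(\sigma_f(A)\le_{\min}\sigma_f(B)\). Your extra well-definedness checks (disjointness of the target components, \(e_A\in M_A\) because \(\tau(e_A)=e_A\)) are accurate. They add nothing the proof needs beyond the paper's remark that uniqueness of \(\sigma_f(A)\) is automatic.
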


\begin{proof}
Let \(A,B\in \Cm(\mathbf M)\) with \(A\le_{\min} B\).
By Lemma~\ref{lem:block-join-min}, this means \(e_A\le e_B\).
Since \(f\) is isotone, \(f(e_A)\le f(e_B)\).
By Definition~\ref{def:cat-lua-blk-finite}, \(f(e_A)=e_{\sigma_f(A)}\) and \(f(e_B)=e_{\sigma_f(B)}\).
Applying Lemma~\ref{lem:block-join-min} again, we obtain \(\sigma_f(A)\le_{\min}\sigma_f(B)\).
Thus \(\sigma_f\) is isotone.
\end{proof}

\begin{proposition}\label{prop:lua-blk-category}
The objects and morphisms described in Definition~\ref{def:cat-lua-blk-finite} form a category.
We denote it by \(\mathbf{LUA}^{\mathrm{blk}}_{\mathrm{fin}}\).
\end{proposition}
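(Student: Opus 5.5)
To show that Definition~\ref{def:cat-lua-blk-finite} defines a category, we must check two things: identity maps are strict block morphisms, and strict block morphisms are closed under composition. Associativity then comes for free, because composition is ordinary composition of functions.

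\emph{Identities.} The identity map \(\mathrm{id}_M\) is an isotone unital monoid homomorphism. For each \(A\in\Cm(\mathbf M)\) we take \(\sigma_{\mathrm{id}}(A):=A\). Then \(\mathrm{id}_M(M_A)=M_A\) and \(\mathrm{id}_M(e_A)=e_A\), as required. By the remark following the definition, uniqueness of the block is automatic, since the canonical components are nonempty and pairwise disjoint.

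\emph{Composition.} Let \(f\colon\mathbf M\to\mathbf N\) and \(g\colon\mathbf N\to\mathbf K\) be strict block morphisms. The composite \(g\circ f\) is again an isotone unital monoid homomorphism. Fix \(A\in\Cm(\mathbf M)\) and put \(B:=\sigma_f(A)\in\Cm(\mathbf N)\) and \(C:=\sigma_g(B)\in\Cm(\mathbf K)\). The containments chain together:
\[
(g\circ f)(M_A)\subseteq g(N_B)\subseteq K_C .
\]
The block units are also carried along:
\[
(g\circ f)(e_A)=g(e_B)=e_C .
\]
So \(C\) is a valid choice of block for \(g\circ f\) at \(A\), and it is the only one by the same disjointness remark. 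This gives \(\sigma_{g\circ f}=\sigma_g\circ\sigma_f\). That identity is not needed for the proposition itself, but it is worth recording because it will make \(A\mapsto\sigma_f(A)\) functorial later.

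The proof has no real obstacle. The only point to keep in mind is that existence of the required block is the only thing to verify. This works because the image of \(M_A\) under \(f\) lies entirely in one component of \(\mathbf N\), namely \(N_{\sigma_f(A)}\). Hence applying \(g\) only requires the defining property of \(g\) at that single block. No analysis of how \(g\) acts across several blocks is needed, and none of the structural results of Section~\ref{sec:recap} are used.
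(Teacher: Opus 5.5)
Your proposal is correct and matches the paper's proof essentially step for step. Identities trivially satisfy the block conditions, and for a composite the containment \(f(M_A)\subseteq N_{\sigma_f(A)}\) and the equality \(f(e_A)=e_{\sigma_f(A)}\) chain through \(\sigma_g\circ\sigma_f\), with associativity and the identity laws inherited from ordinary composition of maps.
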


\begin{proof}
The identity map on any finite local-unit-aligned totally ordered monoid is plainly an isotone unital monoid homomorphism, and it preserves each canonical block and each block unit.
Thus identities are morphisms.

Let \(f\colon \mathbf M\to \mathbf N\) and \(g\colon \mathbf N\to \mathbf P\) be morphisms.
For each \(A\in \Cm(\mathbf M)\), one has \(f(M_A)\subseteq N_{\sigma_f(A)}\), and hence
\[
g(f(M_A))\subseteq P_{\sigma_g(\sigma_f(A))}.
\]
Also,
\[
(g\circ f)(e_A)
=
g(e_{\sigma_f(A)})
=
e_{\sigma_g(\sigma_f(A))}.
\]
Therefore \(g\circ f\) is again a morphism in the sense of Definition~\ref{def:cat-lua-blk-finite}.
The associativity and identity axioms are inherited from ordinary composition of maps.
\end{proof}

\begin{definition}[Rigid direct systems]\label{def:rigid-direct-system-object}
A \emph{rigid direct system} is a unit-constant finite chain-indexed direct system
\[
\mathbf D=\bigl((\mathbf M_i)_{i\in I},
(\rho^{\mathbf D}_{i,k})_{i\le k}\bigr)
\]
whose constituent monoids are \(\tau\)-multiplication-cohesive finite local-unit-aligned totally ordered monoids.
\end{definition}

\begin{definition}[Rigid-system morphisms]\label{def:rigid-direct-system-morphism}
Let
\[
\mathbf D=\bigl((\mathbf M_i)_{i\in I},(\rho^{\mathbf D}_{i,k})_{i\le k}\bigr)
\qquad\text{and}\qquad
\mathbf E=\bigl((\mathbf N_j)_{j\in J},(\rho^{\mathbf E}_{j,\ell})_{j\le \ell}\bigr)
\]
be rigid direct systems, and write \(i_0:=\min I\) and \(j_0:=\min J\).
A \emph{morphism} \(\Phi\colon \mathbf D\to \mathbf E \) is a pair \(\Phi=(\sigma,(\Phi_i)_{i\in I})\) consisting of the following items:
\begin{enumerate}
\taggeditem{\textup{(DS1)}}{item:rigds-index}
an isotone map \(\sigma\colon I\to J\) with \(\sigma(i_0)=j_0\);

\taggeditem{\textup{(DS2)}}{item:rigds-components}
for each \(i\in I\), an isotone unital monoid homomorphism \(\Phi_i\colon \mathbf M_i\to \mathbf N_{\sigma(i)}\);

\taggeditem{\textup{(DS3)}}{item:rigds-transition-compatibility}
for all \(i\le k\) in \(I\), the compatibility relation
\[
\Phi_k\circ \rho^{\mathbf D}_{i,k}
=
\rho^{\mathbf E}_{\sigma(i),\sigma(k)}\circ \Phi_i
\]
holds;

\taggeditem{\textup{(Ord)}}{item:rigds-directed-order-condition}
for all \(i\le k\) in \(I\), all \(x\in M_k\), and all \(y\in M_i\), if \(x <_k \rho^{\mathbf D}_{i,k}(y)\), then
\[
\begin{cases}
\Phi_k(x)\le_{\sigma(i)} \Phi_i(y), & \text{if }\sigma(i)=\sigma(k),\\[1mm]
\Phi_k(x)<_{\sigma(k)}
\rho^{\mathbf E}_{\sigma(i),\sigma(k)}\!\bigl(\Phi_i(y)\bigr),
& \text{if }\sigma(i)<\sigma(k).
\end{cases}
\]
\end{enumerate}
The conditions \ref{item:rigds-index}--\ref{item:rigds-transition-compatibility} are the componentwise direct-system compatibility conditions associated with the chosen isotone index map \(\sigma\).
We do not require \(\sigma\) to be injective: this is deliberate, since non-injectivity records the collapse of several source components into one target component.
Thus this is a broader system-morphism convention than the injective-index convention often used for direct-system embeddings in ordered algebra.
Condition \ref{item:rigds-directed-order-condition} is the additional requirement ensuring that the piecewise map preserves the directed lexicographic order in the case where a higher source component is compared with a lower one, namely the reversed cross-level case.

The identity morphism on \(\mathbf D\) is \(\bigl(\mathrm{id}_I,(\mathrm{id}_{\mathbf M_i})_{i\in I}\bigr), \) and if
\[
\Phi=(\sigma,(\Phi_i)_{i\in I})\colon \mathbf D\to \mathbf E,
\qquad
\Psi=(\nu,(\Psi_j)_{j\in J})\colon \mathbf E\to \mathbf F,
\]
then their composite is declared to be
\[
\Psi\circ \Phi
:=
\bigl(\nu\circ\sigma,(\Psi_{\sigma(i)}\circ \Phi_i)_{i\in I}\bigr).
\]
\end{definition}

\begin{remark}\label{rem:collapse-via-sigma}
Because \(\sigma\) is only required to be isotone, several source blocks may be sent to the same target block.
Thus if \(i<k\) and \(\sigma(i)=\sigma(k)\), then condition \ref{item:rigds-transition-compatibility} becomes \(\Phi_k\circ \rho^{\mathbf D}_{i,k}=\Phi_i. \)
Since \(\mathbf D\) is rigid, \(\rho^{\mathbf D}_{i,k}\) is unit-constant, so \(\Phi_i\) is forced to be unit-constant as well.
Hence the non-injective index map \(\sigma\) encodes precisely the collapse phenomenon: blocks lying in the same fibre of \(\sigma\) are merged.
\end{remark}

\begin{proposition}\label{prop:realization-of-rigds-morphism}
Let
\[
\Phi=\bigl(\sigma,(\Phi_i)_{i\in I}\bigr)\colon \mathbf D\to \mathbf E
\]
be a morphism of rigid direct systems.
Its \emph{ambient realization} is the piecewise-defined map
\[
\mathfrak M(\Phi)\colon \mathfrak M(\mathbf D)\to \mathfrak M(\mathbf E),
\qquad
\mathfrak M(\Phi)|_{M_i}:=\Phi_i.
\]
Then \(\mathfrak M(\Phi)\) is a morphism in \(\mathbf{LUA}^{\mathrm{blk}}_{\mathrm{fin}}\).
More precisely, it is an isotone unital monoid homomorphism and, under the canonical identifications of construction components with canonical blocks, its induced block map is the index map \(\sigma\).
\end{proposition}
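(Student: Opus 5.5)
Write \(F:=\mathfrak M(\Phi)\). We check directly that \(F\) is unital, multiplicative and isotone, using the explicit description of \(\mathfrak M(\mathbf D)\) and \(\mathfrak M(\mathbf E)\) in Theorem~\ref{thm:representation}\ref{item:representation-reconstruction-from-direct-system}. Then we identify the block map using Theorem~\ref{thm:representation}\ref{item:representation-direct-system-recovery}.

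\emph{Unitality} is immediate. The global identity of \(\mathfrak M(\mathbf D)\) is \(e_{i_0}\), and by \ref{item:rigds-index} and \ref{item:rigds-components} we get \(F(e_{i_0})=\Phi_{i_0}(e_{i_0})=e_{\sigma(i_0)}=e_{j_0}\), the identity of \(\mathfrak M(\mathbf E)\).

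\emph{Multiplicativity.} Take \(x\in M_i\), \(y\in M_j\) and put \(k=\max\{i,j\}\). Since \(\sigma\) is isotone, \(\sigma(k)=\max\{\sigma(i),\sigma(j)\}\). Using \ref{item:rigds-components} and \ref{item:rigds-transition-compatibility},
\[
F(xy)=\Phi_k\bigl(\rho^{\mathbf D}_{i,k}(x)\rho^{\mathbf D}_{j,k}(y)\bigr)
=\rho^{\mathbf E}_{\sigma(i),\sigma(k)}(\Phi_i(x))\,\rho^{\mathbf E}_{\sigma(j),\sigma(k)}(\Phi_j(y)).
\]
The right-hand side is exactly \(F(x)F(y)\) in \(\mathfrak M(\mathbf E)\).

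\emph{Isotonicity.} Use Remark~\ref{rem:directed-lex-order-working-forms} and split into cases. Let \(x\in M_i\), \(y\in M_j\) with \(x\le y\).
\begin{itemize}
\item If \(i\le j\), then \(\rho^{\mathbf D}_{i,j}(x)\le_j y\). Apply the isotone map \(\Phi_j\) and then \ref{item:rigds-transition-compatibility} to get \(\rho^{\mathbf E}_{\sigma(i),\sigma(j)}(\Phi_i(x))\le_{\sigma(j)}\Phi_j(y)\). Since \(\sigma(i)\le\sigma(j)\), the working form gives \(F(x)\le F(y)\); when \(\sigma(i)=\sigma(j)\) this is just comparison inside the single component.
\item If \(j<i\), then \(x<_i\rho^{\mathbf D}_{j,i}(y)\). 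This is exactly the hypothesis of \ref{item:rigds-directed-order-condition} with \((i,k)\) replaced by \((j,i)\).
\begin{itemize}
\item If \(\sigma(j)=\sigma(i)\), condition \ref{item:rigds-directed-order-condition} gives \(\Phi_i(x)\le\Phi_j(y)\) inside one component, so \(F(x)\le F(y)\).
\item If \(\sigma(j)<\sigma(i)\), it gives \(\Phi_i(x)<_{\sigma(i)}\rho^{\mathbf E}_{\sigma(j),\sigma(i)}(\Phi_j(y))\), which is the strict cross-level criterion for \(F(x)<F(y)\).
\end{itemize}
\end{itemize}
This reversed cross-level case is the only place where \ref{item:rigds-directed-order-condition} is needed. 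It is the step most prone to error, because the case \(\sigma(j)=\sigma(i)\) degenerates into a non-strict comparison within a single target component.

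\emph{Strict block property.} Both \(\mathbf D\) and \(\mathbf E\) are rigid, so their constituents are \(\tau\)-multiplication-cohesive. Theorem~\ref{thm:representation}\ref{item:representation-direct-system-recovery} therefore identifies \(\Cm(\mathfrak M(\mathbf D))=\{P_i\}\) with canonical components \(M_i\) and block units \(e_i\), and likewise for \(\mathbf E\). Consequently \(F(M_i)=\Phi_i(M_i)\subseteq N_{\sigma(i)}\), and \(F(e_i)=\Phi_i(e_i)=e_{\sigma(i)}\) because \(\Phi_i\) is unital. So \(F\) is a strict block morphism, and under \(i\mapsto P_i\) its induced block map is \(\sigma\). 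Uniqueness of that block follows from the remark after Definition~\ref{def:cat-lua-blk-finite}.
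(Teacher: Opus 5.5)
Your proposal is correct and follows essentially the same route as the paper: unitality from \ref{item:rigds-index}--\ref{item:rigds-components}, multiplicativity via \ref{item:rigds-transition-compatibility} together with \(\sigma(\max\{i,j\})=\max\{\sigma(i),\sigma(j)\}\), isotonicity by the forward/reversed case split using Remark~\ref{rem:directed-lex-order-working-forms} and \ref{item:rigds-directed-order-condition}, and the block condition via Theorem~\ref{thm:representation}\ref{item:representation-direct-system-recovery}. The only cosmetic difference is that you fold the same-component case \(i=j\) into the forward case \(i\le j\); this is harmless because \(\rho^{\mathbf D}_{i,i}\) and \(\rho^{\mathbf E}_{\sigma(i),\sigma(i)}\) are identities.
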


\begin{proof}
Since the summands \(M_i\) are pairwise disjoint, the map is well defined.
Because \(\sigma(i_0)=j_0\) and \(\Phi_{i_0}\) is unital, the global unit \(e_{i_0}\) of \(\mathfrak M(\mathbf D)\) is sent to the global unit \(e_{j_0}\) of \(\mathfrak M(\mathbf E)\).

We first prove multiplicativity.
Let \(x\in M_i\) and \(y\in M_k\), and put \(m:=\max\{i,k\}\) and \(n:=\max\{\sigma(i),\sigma(k)\}=\sigma(m)\), where the last equality holds because \(\sigma\) is isotone.
Then in \(\mathfrak M(\mathbf D)\), \(xy=\rho^{\mathbf D}_{i,m}(x)\cdot_m \rho^{\mathbf D}_{k,m}(y)\in M_m\).
Hence
\begin{align*}
\mathfrak M(\Phi)(xy)
&=\Phi_m\bigl(\rho^{\mathbf D}_{i,m}(x)\cdot_m
\rho^{\mathbf D}_{k,m}(y)\bigr)\\
&=\Phi_m\bigl(\rho^{\mathbf D}_{i,m}(x)\bigr)\cdot_n
\Phi_m\bigl(\rho^{\mathbf D}_{k,m}(y)\bigr)\\
&=\rho^{\mathbf E}_{\sigma(i),n}(\Phi_i(x))\cdot_n
\rho^{\mathbf E}_{\sigma(k),n}(\Phi_k(y)),
\end{align*}
by condition \ref{item:rigds-transition-compatibility} of Definition~\ref{def:rigid-direct-system-morphism}.
This is exactly the ambient product of \(\Phi_i(x)\) and \(\Phi_k(y)\) in \(\mathfrak M(\mathbf E)\).
Therefore \(\mathfrak M(\Phi)(xy)=\mathfrak M(\Phi)(x)\mathfrak M(\Phi)(y)\).

We now prove isotonicity.
Let \(x\in M_i\), \(y\in M_k\), and assume \(x\le y\) in \(\mathfrak M(\mathbf D)\).

If \(i=k\), then \(x\le_i y\), so isotonicity of \(\Phi_i\) gives \(\mathfrak M(\Phi)(x)=\Phi_i(x)\le_{\sigma(i)} \Phi_i(y)=\mathfrak M(\Phi)(y)\).

Assume next that \(i<k\).
By Remark~\ref{rem:directed-lex-order-working-forms}, we have \(\rho^{\mathbf D}_{i,k}(x)\le_k y\).
Applying the isotone map \(\Phi_k\) and using condition \ref{item:rigds-transition-compatibility}, we obtain
\[
\rho^{\mathbf E}_{\sigma(i),\sigma(k)}(\Phi_i(x))
=
\Phi_k\bigl(\rho^{\mathbf D}_{i,k}(x)\bigr)
\le_{\sigma(k)}
\Phi_k(y).
\]
Since \(\sigma(i)\le \sigma(k)\), another application of Remark~\ref{rem:directed-lex-order-working-forms} shows that \(\mathfrak M(\Phi)(x)\le \mathfrak M(\Phi)(y)\).

Finally, assume that \(k<i\).
By Remark~\ref{rem:directed-lex-order-working-forms}, the inequality \(x\le y\) implies \(x<_i \rho^{\mathbf D}_{k,i}(y)\).
Applying condition \ref{item:rigds-directed-order-condition} with \(k\le i\), \(x\in M_i\), and \(y\in M_k\), we obtain either \(\Phi_i(x)\le_{\sigma(i)}\Phi_k(y)\) if \(\sigma(k)=\sigma(i)\), or
\[
\Phi_i(x)<_{\sigma(i)}
\rho^{\mathbf E}_{\sigma(k),\sigma(i)}\!\bigl(\Phi_k(y)\bigr)
\]
if \(\sigma(k)<\sigma(i)\).
In the first case the two images lie in the same component of \(\mathfrak M(\mathbf E)\), so the displayed internal inequality is exactly \(\mathfrak M(\Phi)(x)\le \mathfrak M(\Phi)(y)\).
In the second case, Remark~\ref{rem:directed-lex-order-working-forms} gives \(\mathfrak M(\Phi)(x)<\mathfrak M(\Phi)(y)\).
Thus \(\mathfrak M(\Phi)\) is isotone.

It remains only to verify the strict block condition.
For each \(i\in I\), put
\[
P_i:=\PosId(\mathfrak M(\mathbf D))\cap M_i,
\qquad
Q_{\sigma(i)}:=\PosId(\mathfrak M(\mathbf E))\cap N_{\sigma(i)}.
\]
By Theorem~\ref{thm:representation}\ref{item:representation-direct-system-recovery}, these are canonical blocks, with \(\layer{P_i}=M_i\) and \(\layer{Q_{\sigma(i)}}=N_{\sigma(i)}\), and their block units are respectively \(e_i\) and \(e_{\sigma(i)}\).
Since \(\mathfrak M(\Phi)(M_i)=\Phi_i(M_i)\subseteq N_{\sigma(i)}\) and
\[
\mathfrak M(\Phi)(e_i)=\Phi_i(e_i)=e_{\sigma(i)},
\]
the map \(\mathfrak M(\Phi)\) sends the canonical block \(P_i\) into \(Q_{\sigma(i)}\) and sends its block unit to the block unit of \(Q_{\sigma(i)}\).
Hence \(\mathfrak M(\Phi)\) is a morphism in \(\mathbf{LUA}^{\mathrm{blk}}_{\mathrm{fin}}\), and its induced block map corresponds to \(\sigma\) under the canonical identifications \(i\leftrightarrow P_i\) and \(j\leftrightarrow Q_j\).
\end{proof}

\begin{proposition}\label{prop:ambient-maps-vs-rigds-morphisms}
Let \(\mathbf D\) and \(\mathbf E\) be rigid direct systems.
The ambient realization construction of Proposition~\ref{prop:realization-of-rigds-morphism} gives a bijection
\[
\operatorname{Hom}_{\mathbf{RigDS}_{\mathrm{fin}}}(\mathbf D,\mathbf E)
\cong
\operatorname{Hom}_{\mathbf{LUA}^{\mathrm{blk}}_{\mathrm{fin}}}
(\mathfrak M(\mathbf D),\mathfrak M(\mathbf E)).
\]
More explicitly, a rigid-system morphism \(\Phi\) is sent to its piecewise realization \(\mathfrak M(\Phi)\), and every strict block morphism \(f\colon \mathfrak M(\mathbf D)\to \mathfrak M(\mathbf E)\) is realized by a unique rigid-system morphism.
\end{proposition}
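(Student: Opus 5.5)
Proposition~\ref{prop:realization-of-rigds-morphism} already shows that \(\Phi\mapsto\mathfrak M(\Phi)\) is a well-defined map into \(\operatorname{Hom}_{\mathbf{LUA}^{\mathrm{blk}}_{\mathrm{fin}}}(\mathfrak M(\mathbf D),\mathfrak M(\mathbf E))\). So the plan is to prove injectivity and surjectivity, identifying throughout the construction components with canonical blocks via Theorem~\ref{thm:representation}\ref{item:representation-direct-system-recovery}: \(i\leftrightarrow P_i\), \(j\leftrightarrow Q_j\), with \(\layer{P_i}=M_i\), \(\layer{Q_j}=N_j\), block units \(e_i\), \(e_j\), and recovered transition maps equal to the original ones.

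\emph{Injectivity.} Suppose \(\mathfrak M(\Phi)=\mathfrak M(\Psi)\) with \(\Phi=(\sigma,(\Phi_i))\) and \(\Psi=(\sigma',(\Psi_i))\).
\begin{enumerate}
\item The component maps agree, since \(\Phi_i=\mathfrak M(\Phi)|_{M_i}=\Psi_i\).
\item The index maps agree. Because \(M_i\) is nonempty and the \(N_j\) are pairwise disjoint, \(\sigma(i)\) is the unique \(j\) with \(\Phi_i(M_i)\subseteq N_j\). Hence \(\sigma=\sigma'\).
\end{enumerate}

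\emph{Surjectivity.} Let \(f\) be a strict block morphism. Define \(\sigma(i)\) as the \(j\) with \(\sigma_f(P_i)=Q_j\), and put \(\Phi_i:=f|_{M_i}\colon M_i\to N_{\sigma(i)}\). Then \(\mathfrak M(\Phi)=f\) by construction, so it remains to check the axioms of Definition~\ref{def:rigid-direct-system-morphism}.
\begin{enumerate}
\item \ref{item:rigds-index}: \(\sigma\) is isotone by Lemma~\ref{lem:block-preserving-morphism-induces-sigma}. Also \(f(e_{i_0})=e_{j_0}\), since \(f\) is unital and the global units are \(e_{i_0}\), \(e_{j_0}\). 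The block \(Q_{\sigma(i_0)}\) therefore contains \(e_{j_0}\), forcing \(\sigma(i_0)=j_0\).
\item \ref{item:rigds-components}: \(\Phi_i\) is isotone and multiplicative because \(M_i\) and \(N_{\sigma(i)}\) are ordered subsemigroups carrying their original order and multiplication. It is unital because \(f(e_i)=e_{\sigma(i)}\) by the strict block condition.
\item \ref{item:rigds-transition-compatibility}: for \(i<k\), unit-constancy gives \(\Phi_k(\rho^{\mathbf D}_{i,k}(x))=\Phi_k(e_k)=e_{\sigma(k)}\). If \(\sigma(i)<\sigma(k)\), the right-hand side \(\rho^{\mathbf E}_{\sigma(i),\sigma(k)}(\Phi_i(x))\) also equals \(e_{\sigma(k)}\) by unit-constancy of \(\mathbf E\).
\end{enumerate}

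\emph{Main obstacle.} The delicate case of \ref{item:rigds-transition-compatibility} is \(\sigma(i)=\sigma(k)\). Here one must show \(\Phi_i(x)=e_{\sigma(i)}\) for all \(x\in M_i\); that is, \(f\) must collapse the lower block to the unit. I would use multiplicativity in the ambient monoid. By Theorem~\ref{thm:representation}\ref{item:representation-reconstruction-from-direct-system} and unit-constancy, \(x e_k=\rho^{\mathbf D}_{i,k}(x)\cdot_k e_k=e_k\) for \(x\in M_i\). Applying \(f\) yields
\[
\Phi_i(x)\,e_{\sigma(k)}=e_{\sigma(k)}.
\]
Both factors lie in \(N_{\sigma(i)}=N_{\sigma(k)}\), so this product is computed in \(\mathbf N_{\sigma(i)}\), where \(e_{\sigma(i)}\) is the identity. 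Hence \(\Phi_i(x)=e_{\sigma(i)}\).

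\emph{Condition \ref{item:rigds-directed-order-condition}.} This I would derive by reading the order in both directions through Remark~\ref{rem:directed-lex-order-working-forms}. Let \(i\le k\), \(x\in M_k\), \(y\in M_i\) with \(x<_k\rho^{\mathbf D}_{i,k}(y)\). If \(i<k\), the Remark gives \(x<y\) in \(\mathfrak M(\mathbf D)\); if \(i=k\), the hypothesis is just \(x<_i y\). Isotonicity of \(f\) then gives \(\Phi_k(x)\le\Phi_i(y)\) in \(\mathfrak M(\mathbf E)\).
\begin{enumerate}
\item If \(\sigma(i)=\sigma(k)\), the two images lie in the same component, and this inequality is exactly \(\Phi_k(x)\le_{\sigma(i)}\Phi_i(y)\).
\item If \(\sigma(i)<\sigma(k)\), the images lie in different components, so they are distinct and \(\Phi_k(x)<\Phi_i(y)\) strictly. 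The strict working form of Remark~\ref{rem:directed-lex-order-working-forms}, applied with the index of \(\Phi_i(y)\) below that of \(\Phi_k(x)\), then gives \(\Phi_k(x)<_{\sigma(k)}\rho^{\mathbf E}_{\sigma(i),\sigma(k)}(\Phi_i(y))\).
\end{enumerate}
This completes the verification that \(\Phi\) is a rigid-system morphism with \(\mathfrak M(\Phi)=f\).
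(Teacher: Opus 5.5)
Your proposal is correct and follows essentially the same route as the paper: you restrict \(f\) to the construction components, read off \(\sigma\) from the strict block condition via the identifications of Theorem~\ref{thm:representation}\ref{item:representation-direct-system-recovery}, and verify \ref{item:rigds-directed-order-condition} through Remark~\ref{rem:directed-lex-order-working-forms}. The differences are cosmetic: the paper checks \ref{item:rigds-transition-compatibility} uniformly via \(f(xe_k)=f(x)f(e_k)=\Phi_i(x)e_{\sigma(k)}=\rho^{\mathbf E}_{\sigma(i),\sigma(k)}(\Phi_i(x))\), of which your collapse computation is the case \(\sigma(i)=\sigma(k)\) (the case \(i=k\), which you omit, is trivial), and in the uniqueness part it recovers the index map from distinctness of block units rather than from disjointness of the \(N_j\).
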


\begin{proof}
By Proposition~\ref{prop:realization-of-rigds-morphism}, every morphism \(\Phi\colon\mathbf D\to\mathbf E\) of rigid direct systems has a piecewise realization \(\mathfrak M(\Phi)\), and this realization is a morphism in \(\mathbf{LUA}^{\mathrm{blk}}_{\mathrm{fin}}\).
If \(g\colon \mathfrak M(\mathbf D)\to \mathfrak M(\mathbf E)\) is any ambient realization of \(\Phi\), then for each \(i\in I\) one has \(g|_{M_i}=\Phi_i\).
Since the blocks \(M_i\) are pairwise disjoint and their union is all of \(\mathfrak M(\mathbf D)\), these restrictions determine \(g\) uniquely.

Now let
\[
f\colon \mathfrak M(\mathbf D)\to \mathfrak M(\mathbf E)
\]
be a morphism in \(\mathbf{LUA}^{\mathrm{blk}}_{\mathrm{fin}}\).
For each \(i\in I\), put
\[
P_i:=\PosId(\mathfrak M(\mathbf D))\cap M_i.
\]
By Theorem~\ref{thm:representation}\ref{item:representation-direct-system-recovery}, the \(P_i\)'s are the canonical blocks of \(\mathfrak M(\mathbf D)\), and \(\layer{P_i}=M_i\).
Similarly, the canonical blocks of \(\mathfrak M(\mathbf E)\) are
\[
Q_j:=\PosId(\mathfrak M(\mathbf E))\cap N_j
\qquad(j\in J),
\]
with \(\layer{Q_j}=N_j\).

Since \(f\) is a strict block morphism, for each \(i\in I\) there is a unique \(j\in J\) such that
\[
f(M_i)\subseteq N_j
\qquad\text{and}\qquad
f(e_i)=e_j.
\]
Define \(\sigma(i):=j\).
Equivalently, \(Q_{\sigma(i)}\) is the canonical block \(\sigma_f(P_i)\) of \(\mathfrak M(\mathbf E)\).
By Lemma~\ref{lem:block-preserving-morphism-induces-sigma}, the block map \(\sigma_f\) is isotone; since the maps \(i\mapsto P_i\) and \(j\mapsto Q_j\) are order isomorphisms by Theorem~\ref{thm:representation}\ref{item:representation-direct-system-recovery}, the induced map \(\sigma\colon I\to J\) is isotone.

For each \(i\in I\), define
\[
\Phi_i:=f|_{M_i}\colon \mathbf M_i\to \mathbf N_{\sigma(i)}.
\]
Since \(f(M_i)\subseteq N_{\sigma(i)}\) and \(f\) is isotone and multiplicative, each \(\Phi_i\) is an isotone monoid homomorphism.
It is unital because \(\Phi_i(e_i)=f(e_i)=e_{\sigma(i)}\).
Also, since \(f\) is unital and \(e_{i_0}\), \(e_{j_0}\) are the global units of \(\mathfrak M(\mathbf D)\) and \(\mathfrak M(\mathbf E)\), we have \(e_{j_0}=f(e_{i_0})=e_{\sigma(i_0)}\), hence \(\sigma(i_0)=j_0\).

We verify condition \ref{item:rigds-transition-compatibility}.
Let \(i\le k\) and \(x\in M_i\).
In \(\mathfrak M(\mathbf D)\), one has \(\rho^{\mathbf D}_{i,k}(x)=xe_k\).
Therefore
\begin{align*}
\Phi_k\bigl(\rho^{\mathbf D}_{i,k}(x)\bigr)
&=
f\bigl(\rho^{\mathbf D}_{i,k}(x)\bigr)
=
f(xe_k)
=
f(x)f(e_k)\\
&=
\Phi_i(x)e_{\sigma(k)}
=
\rho^{\mathbf E}_{\sigma(i),\sigma(k)}\!\bigl(\Phi_i(x)\bigr),
\end{align*}
where the last equality holds because \(\sigma(i)\le\sigma(k)\) and the ambient product with the unit \(e_{\sigma(k)}\) of \(N_{\sigma(k)}\) realizes the transition map \(\rho^{\mathbf E}_{\sigma(i),\sigma(k)}\).

It remains to verify condition \ref{item:rigds-directed-order-condition}.
Let \(i\le k\), let \(x\in M_k\), and let \(y\in M_i\) satisfy \(x<_k\rho^{\mathbf D}_{i,k}(y)\).
By Remark~\ref{rem:directed-lex-order-working-forms}, this is equivalent to \(x<y\) in \(\mathfrak M(\mathbf D)\).
Since \(f\) is isotone, \(f(x)\le f(y)\) in \(\mathfrak M(\mathbf E)\).

If \(\sigma(i)=\sigma(k)\), then both \(f(x)\) and \(f(y)\) lie in the same block \(N_{\sigma(i)}\), so this means
\[
\Phi_k(x)\le_{\sigma(i)}\Phi_i(y).
\]
If \(\sigma(i)<\sigma(k)\), then \(f(x)\in N_{\sigma(k)}\) and \(f(y)\in N_{\sigma(i)}\).
By Remark~\ref{rem:directed-lex-order-working-forms}, the inequality \(f(x)\le f(y)\) is equivalent to
\[
\Phi_k(x)<_{\sigma(k)}
\rho^{\mathbf E}_{\sigma(i),\sigma(k)}\!\bigl(\Phi_i(y)\bigr).
\]
Thus condition \ref{item:rigds-directed-order-condition} holds.

Hence
\[
\Phi:=\bigl(\sigma,(\Phi_i)_{i\in I}\bigr)
\]
is a morphism of rigid direct systems.
By construction, \(\mathfrak M(\Phi)=f\).
To prove uniqueness, let \(\Psi=(\nu,(\Psi_i)_{i\in I})\colon \mathbf D\to \mathbf E\) be another morphism with \(\mathfrak M(\Psi)=f\).
Then \(\Psi_i=f|_{M_i}=\Phi_i\) for every \(i\in I\), and
\[
e_{\nu(i)}
=
\Psi_i(e_i)
=
f(e_i)
=
e_{\sigma(i)}.
\]
Since the block units in \(\mathfrak M(\mathbf E)\) are pairwise distinct, it follows that \(\nu(i)=\sigma(i)\) for every \(i\in I\).
Hence \(\Psi=\Phi\).
\end{proof}

\begin{proposition}\label{prop:rigds-category}
The objects of Definition~\ref{def:rigid-direct-system-object} and the morphisms of Definition~\ref{def:rigid-direct-system-morphism}, equipped with the identity morphisms and composition law specified there, form a category.
We denote it by \(\mathbf{RigDS}_{\mathrm{fin}}\).
\end{proposition}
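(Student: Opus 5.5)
The plan is to check the category axioms directly. One point matters above all: the declared composite must again satisfy \ref{item:rigds-index}--\ref{item:rigds-directed-order-condition}. Identities are immediate. For \(\bigl(\mathrm{id}_I,(\mathrm{id}_{\mathbf M_i})_{i\in I}\bigr)\), condition \ref{item:rigds-index} is trivial and \ref{item:rigds-components} is clear. Condition \ref{item:rigds-transition-compatibility} reads \(\rho^{\mathbf D}_{i,k}=\rho^{\mathbf D}_{i,k}\). In \ref{item:rigds-directed-order-condition} we always have \(\sigma(i)=i\le k=\sigma(k)\). If \(i=k\), the hypothesis \(x<_k y\) gives \(x\le_i y\). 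If \(i<k\), the required conclusion is literally the hypothesis. The unit laws and associativity then follow componentwise from ordinary composition of maps, because \((\Xi\circ\Psi)\circ\Phi\) and \(\Xi\circ(\Psi\circ\Phi)\) both have index map \(\mu\nu\sigma\) and components \(\Xi_{\nu\sigma(i)}\Psi_{\sigma(i)}\Phi_i\).

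To prove closure under composition without unpacking the reversed cross-level condition by hand, I will use the ambient realization. Let \(\Phi\colon\mathbf D\to\mathbf E\) and \(\Psi\colon\mathbf E\to\mathbf F\) be morphisms. By Proposition~\ref{prop:realization-of-rigds-morphism}, \(\mathfrak M(\Phi)\) and \(\mathfrak M(\Psi)\) are morphisms in \(\mathbf{LUA}^{\mathrm{blk}}_{\mathrm{fin}}\), and by Proposition~\ref{prop:lua-blk-category} so is \(g:=\mathfrak M(\Psi)\circ\mathfrak M(\Phi)\colon\mathfrak M(\mathbf D)\to\mathfrak M(\mathbf F)\). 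Proposition~\ref{prop:ambient-maps-vs-rigds-morphisms} then supplies a unique rigid-system morphism \(\Theta=(\theta,(\Theta_i))\) with \(\mathfrak M(\Theta)=g\). Its proof gives \(\Theta_i=g|_{M_i}=\Psi_{\sigma(i)}\circ\Phi_i\). Moreover, \(\theta(i)\) is determined by \(g(e_i)=e_{\theta(i)}\). Since \(g(e_i)=\Psi_{\sigma(i)}(e_{\sigma(i)})=e_{\nu\sigma(i)}\) and block units are distinct, \(\theta=\nu\circ\sigma\). Hence \(\Theta\) is exactly the declared composite \(\Psi\circ\Phi\), which is therefore a morphism.

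The main obstacle is a possible circularity in this shortcut. Propositions~\ref{prop:realization-of-rigds-morphism} and~\ref{prop:ambient-maps-vs-rigds-morphisms} are stated for \(\operatorname{Hom}_{\mathbf{RigDS}_{\mathrm{fin}}}\), but their proofs only use the definition of a morphism as a set of data satisfying \ref{item:rigds-index}--\ref{item:rigds-directed-order-condition}, not the category structure. So the circularity is harmless, and I will note this explicitly.

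As a cross-check, or as a direct alternative, I would verify the conditions by hand. Conditions \ref{item:rigds-index}--\ref{item:rigds-transition-compatibility} compose in the obvious way; the key step is \(\Psi_{\sigma k}\rho^{\mathbf E}_{\sigma i,\sigma k}\Phi_i=\rho^{\mathbf F}_{\nu\sigma i,\nu\sigma k}\Psi_{\sigma i}\Phi_i\). For \ref{item:rigds-directed-order-condition}, suppose \(x<_k\rho^{\mathbf D}_{i,k}(y)\). By Remark~\ref{rem:directed-lex-order-working-forms} this means \(x<y\) in \(\mathfrak M(\mathbf D)\). Isotonicity of both realizations then gives \(g(x)\le g(y)\) in \(\mathfrak M(\mathbf F)\). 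This translates back into the two cases of \ref{item:rigds-directed-order-condition} according to whether \(\nu\sigma(i)=\nu\sigma(k)\) or \(\nu\sigma(i)<\nu\sigma(k)\), using Remark~\ref{rem:directed-lex-order-working-forms} once more, as in the proof of Proposition~\ref{prop:ambient-maps-vs-rigds-morphisms}.
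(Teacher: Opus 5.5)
Your proposal is correct and follows essentially the same route as the paper. You check the identity morphism directly, obtain closure under composition by viewing \(\mathfrak M(\Psi)\circ\mathfrak M(\Phi)\) as a strict block morphism and pulling it back through the inverse-realization part of Proposition~\ref{prop:ambient-maps-vs-rigds-morphisms} (with \(\Theta_i=\Psi_{\sigma(i)}\circ\Phi_i\) and index map \(\nu\circ\sigma\) forced by distinct block units), and get associativity componentwise. Your remark that this is not circular, because those propositions use only conditions (DS1)--(Ord) and not the category structure, is a point the paper leaves implicit.
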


\begin{proof}
Let \(\mathbf D=\bigl((\mathbf M_i)_{i\in I},(\rho^{\mathbf D}_{i,k})_{i\le k}\bigr)\) be a rigid direct system.
The pair
\[
\bigl(\mathrm{id}_I,(\mathrm{id}_{\mathbf M_i})_{i\in I}\bigr)
\]
satisfies the defining conditions of Definition~\ref{def:rigid-direct-system-morphism}: the index map is isotone, preserves the least element, the component maps are isotone unital monoid homomorphisms, and the transition-compatibility and directed-order conditions are immediate.
Hence it is a morphism \(\mathbf D\to\mathbf D\).

Now let
\[
\Phi=(\sigma,(\Phi_i)_{i\in I})\colon \mathbf D\to \mathbf E,
\qquad
\Psi=(\nu,(\Psi_j)_{j\in J})\colon \mathbf E\to \mathbf F
\]
be morphisms of rigid direct systems, where
\[
\mathbf F=
\bigl((\mathbf P_\ell)_{\ell\in K},
(\rho^{\mathbf F}_{\ell,m})_{\ell\le m}\bigr).
\]
Put \(h:=\mathfrak M(\Psi)\circ \mathfrak M(\Phi)\).
By Proposition~\ref{prop:realization-of-rigds-morphism}, \(h\) is an isotone unital monoid homomorphism \(\mathfrak M(\mathbf D)\to \mathfrak M(\mathbf F)\).
For each \(i\in I\), one has
\[
h(M_i)\subseteq P_{\nu(\sigma(i))}
\qquad\text{and}\qquad
h(e_i)=e_{\nu(\sigma(i))}.
\]
Thus \(h\) is a strict block morphism \(\mathfrak M(\mathbf D)\to\mathfrak M(\mathbf F)\).
Hence the inverse-realization part of Proposition~\ref{prop:ambient-maps-vs-rigds-morphisms} yields a unique morphism
\[
\Theta=\bigl(\nu\circ\sigma,(\Theta_i)_{i\in I}\bigr)\colon \mathbf D\to \mathbf F
\]
with \(\mathfrak M(\Theta)=h\).
For each \(i\in I\), the restriction of \(h\) to \(M_i\) is \(h|_{M_i}=\Psi_{\sigma(i)}\circ \Phi_i,\) so uniqueness of the block restrictions gives \(\Theta_i=\Psi_{\sigma(i)}\circ \Phi_i\).
Thus \(\Theta\) is exactly the composite declared in Definition~\ref{def:rigid-direct-system-morphism}, and that declared composite is a morphism.

Associativity follows immediately from the componentwise formula for composition, and the identity laws are immediate as well.
\end{proof}

\begin{theorem}[Categorical equivalence for strict block morphisms]\label{thm:categorical-equivalence}
The assignments
\[
\mathfrak D\colon \mathbf{LUA}^{\mathrm{blk}}_{\mathrm{fin}}\to \mathbf{RigDS}_{\mathrm{fin}},
\qquad
\mathfrak M\colon \mathbf{RigDS}_{\mathrm{fin}}\to \mathbf{LUA}^{\mathrm{blk}}_{\mathrm{fin}}
\]
form quasi-inverse equivalences of categories.
\end{theorem}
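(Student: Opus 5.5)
The plan is to make \(\mathfrak M\) a functor, show it is fully faithful, and show it is essentially surjective; a fully faithful, essentially surjective functor is an equivalence. I will then check that \(\mathfrak D\) can be taken as the quasi-inverse, so that both assignments are functors.

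\emph{Functoriality of \(\mathfrak M\).} On objects, \(\mathfrak M(\mathbf D)\) lies in \(\mathbf{LUA}^{\mathrm{blk}}_{\mathrm{fin}}\) by Theorem~\ref{thm:representation}\ref{item:representation-reconstruction-from-direct-system}. On morphisms, it is given by Proposition~\ref{prop:realization-of-rigds-morphism}. Identities go to identities, since the realization is piecewise the identity. Compositions are preserved because the realization of \(\bigl(\nu\circ\sigma,(\Psi_{\sigma(i)}\circ\Phi_i)_{i}\bigr)\) restricts on each \(M_i\) to \(\Psi_{\sigma(i)}\circ\Phi_i=\mathfrak M(\Psi)\circ\mathfrak M(\Phi)|_{M_i}\). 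Full faithfulness is exactly the bijection of Proposition~\ref{prop:ambient-maps-vs-rigds-morphisms}.

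\emph{Essential surjectivity.} For each \(\mathbf M\), the system \(\mathfrak D(\mathbf M)\) is rigid. It is unit-constant by Corollary~\ref{cor:canonical-transition-maps-unit-constant}, and its components are \(\tau\)-multiplication-cohesive by Corollary~\ref{cor:canonical-components-already-terminal-direct}. Theorem~\ref{thm:representation}\ref{item:representation-canonical-recovery} gives an ordered-monoid isomorphism \(\varepsilon_{\mathbf M}\colon\mathfrak M(\mathfrak D(\mathbf M))\to\mathbf M\), which is the identity on each tagged copy of \(M_A\). This isomorphism must be an isomorphism in \(\mathbf{LUA}^{\mathrm{blk}}_{\mathrm{fin}}\), which is the one point needing care. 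By Theorem~\ref{thm:representation}\ref{item:representation-direct-system-recovery}, the canonical blocks of the source are the \(P_A\) with layers the tagged copies of \(M_A\) and block units \(e_A\). Hence \(\varepsilon_{\mathbf M}\) sends each of these layers onto \(M_A\) and \(e_A\) to \(e_A\), so it is a strict block morphism. Its inverse is one as well, by the same check with roles reversed. (Alternatively: any ordered-monoid isomorphism preserves \(\PosId\), \(\tau\), and hence \(\Cm\), block units and layers, since all of these are defined intrinsically.)

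\emph{Making \(\mathfrak D\) the quasi-inverse.} Define \(\mathfrak D\) on a morphism \(f\colon\mathbf M\to\mathbf N\) as the unique rigid-system morphism \(\mathfrak D(f)\) with
\[
\mathfrak M(\mathfrak D(f))=\varepsilon_{\mathbf N}^{-1}\circ f\circ\varepsilon_{\mathbf M},
\]
which exists by Proposition~\ref{prop:ambient-maps-vs-rigds-morphisms}. Concretely, \(\mathfrak D(f)=(\sigma_f,(f|_{M_A})_A)\). Functoriality then follows from faithfulness of \(\mathfrak M\), and \(\varepsilon\) is natural by construction, giving \(\mathfrak M\mathfrak D\cong\mathrm{Id}\). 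For the other composite, \(\mathfrak D\mathfrak M\cong\mathrm{Id}\), take the isomorphism \(\eta_{\mathbf D}\colon\mathbf D\to\mathfrak D(\mathfrak M(\mathbf D))\) of Theorem~\ref{thm:representation}\ref{item:representation-direct-system-recovery}, namely \(\bigl(i\mapsto P_i,(\mathrm{id}_{M_i})_i\bigr)\). It is a morphism of rigid direct systems with an inverse, since its realization is the identity of \(\mathfrak M(\mathbf D)\). Naturality of \(\eta\) follows again by applying the faithful functor \(\mathfrak M\).

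I expect the main obstacle to be bookkeeping rather than mathematics. The key point is confirming that the canonical comparison isomorphisms respect block units, which makes them isomorphisms in the strict block category. The second point is identifying the tagged copies with canonical blocks consistently, which is what the naturality checks rely on.
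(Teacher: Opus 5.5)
Your proposal is correct and follows essentially the same route as the paper. Both define \(\mathfrak D\) on morphisms through the realization bijection of Proposition~\ref{prop:ambient-maps-vs-rigds-morphisms} after conjugating by the canonical comparison isomorphisms. Both check that these comparisons send layers onto layers and block units to block units, use \(\bigl(i\mapsto P_i,(\mathrm{id}_{\mathbf M_i})_i\bigr)\) on the direct-system side, and derive functoriality and naturality from uniqueness of realizations, i.e.\ faithfulness of \(\mathfrak M\). Your fully-faithful-plus-essentially-surjective framing only repackages these ingredients, and your explicit typing \(\mathfrak M(\mathfrak D(f))=\varepsilon_{\mathbf N}^{-1}\circ f\circ\varepsilon_{\mathbf M}\) states precisely the typed equality the paper uses.
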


\begin{proof}
On objects, \(\mathfrak D\) and \(\mathfrak M\) are exactly those from Theorem~\ref{thm:representation}: \(\mathfrak D\) sends a finite local-unit-aligned totally ordered monoid to its canonical rigid direct system, and \(\mathfrak M\) reconstructs the ambient monoid from a rigid direct system.

On morphisms, \(\mathfrak M\) sends a rigid-system morphism \(\Phi\colon\mathbf D\to\mathbf E\) to its piecewise realization \(\mathfrak M(\Phi)\).
By Proposition~\ref{prop:realization-of-rigds-morphism}, this is a morphism in \(\mathbf{LUA}^{\mathrm{blk}}_{\mathrm{fin}}\).

Conversely, if \(f\colon \mathbf M\to \mathbf N\) is a morphism in \(\mathbf{LUA}^{\mathrm{blk}}_{\mathrm{fin}}\), then Proposition~\ref{prop:ambient-maps-vs-rigds-morphisms}, applied to the canonical rigid direct systems \(\mathfrak D(\mathbf M)\) and \(\mathfrak D(\mathbf N)\), gives a unique rigid-system morphism
\[
\mathfrak D(f)\colon \mathfrak D(\mathbf M)\to \mathfrak D(\mathbf N)
\]
whose ambient realization is \(f\).
This defines \(\mathfrak D\) on morphisms.

Functoriality of \(\mathfrak M\) follows from the componentwise definition of composition in \(\mathbf{RigDS}_{\mathrm{fin}}\).
For \(\mathfrak D\), let \(f\colon \mathbf M\to \mathbf N\) and \(g\colon \mathbf N\to \mathbf P\) be morphisms in \(\mathbf{LUA}^{\mathrm{blk}}_{\mathrm{fin}}\).
Then
\begin{align*}
\mathfrak M\bigl(\mathfrak D(g)\circ \mathfrak D(f)\bigr)
&=\mathfrak M(\mathfrak D(g))\circ \mathfrak M(\mathfrak D(f))\\
&=g\circ f
 =\mathfrak M(\mathfrak D(g\circ f)).
\end{align*}
By the uniqueness clause in Proposition~\ref{prop:ambient-maps-vs-rigds-morphisms}, \(\mathfrak D(g\circ f)=\mathfrak D(g)\circ \mathfrak D(f)\).
The identity case is similar.

We now verify that the two functors are quasi-inverse.

Let \(\mathbf M\in \mathbf{LUA}^{\mathrm{blk}}_{\mathrm{fin}}\).
By Theorem~\ref{thm:representation}\ref{item:representation-canonical-recovery}, there is a canonical isomorphism of ambient monoids
\[
\eta_{\mathbf M}\colon \mathfrak M(\mathfrak D(\mathbf M))
\xrightarrow{\ \cong\ } \mathbf M.
\]
Here \(\eta_{\mathbf M}\) is induced by the identity maps on the canonical component monoids: in the tagged-disjoint-copy convention, it sends the copy of \(x\in M_A\) occurring in the reconstructed union to the element \(x\) of \(\mathbf M\).
Hence \(\eta_{\mathbf M}\) sends the reconstructed block over \(A\) onto \(M_A\) and sends its block unit to \(e_A\), so \(\eta_{\mathbf M}\) is a morphism in \(\mathbf{LUA}^{\mathrm{blk}}_{\mathrm{fin}}\).
If \(f\colon \mathbf M\to \mathbf N\) is such a morphism, then the typed equality
\[
\eta_{\mathbf N}\circ \mathfrak M(\mathfrak D(f))
=f\circ \eta_{\mathbf M}
\]
holds on every reconstructed canonical component.
Thus the family \((\eta_{\mathbf M})\) is a natural isomorphism \(\mathfrak M\circ \mathfrak D\Rightarrow \mathrm{Id}_{\mathbf{LUA}^{\mathrm{blk}}_{\mathrm{fin}}}\).

Now let \(\mathbf D=\bigl((\mathbf M_i)_{i\in I},(\rho^{\mathbf D}_{i,k})_{i\le k}\bigr)\) be a rigid direct system.
By Theorem~\ref{thm:representation}\ref{item:representation-direct-system-recovery}, the canonical decomposition of \(\mathfrak M(\mathbf D)\) coincides with the given construction-level layers \(M_i\).
More precisely, if \(P_i:=\PosId(\mathfrak M(\mathbf D))\cap M_i, \) then
\[
\Cm(\mathfrak M(\mathbf D))=\{P_i:i\in I\}
\qquad\text{and}\qquad
\layer{P_i}=M_i.
\]
Let
\[
\kappa_{\mathbf D}\colon I\to \Cm(\mathfrak M(\mathbf D)),
\qquad
\kappa_{\mathbf D}(i):=P_i.
\]
This map is bijective.
If \(i\le k\), then \(e_i\le e_k\) in the directed lexicographic order, and hence \(P_i\le_{\min}P_k\) by Lemma~\ref{lem:block-join-min}.
Conversely, suppose \(P_i\le_{\min}P_k\).
Then \(e_i\le e_k\) in \(\mathfrak M(\mathbf D)\).
If \(k<i\), the reversed working form of the directed lexicographic order would give \(e_i<_i\rho^{\mathbf D}_{k,i}(e_k)\).
Since \(\mathbf D\) is rigid, \(\rho^{\mathbf D}_{k,i}(e_k)=e_i\), a contradiction.
Hence \(i\le k\).
Thus \(\kappa_{\mathbf D}\) is an order isomorphism.

Let \(\alpha_{\mathbf D}:=\eta_{\mathfrak M(\mathbf D)}^{-1}\colon \mathfrak M(\mathbf D)\to \mathfrak M(\mathfrak D(\mathfrak M(\mathbf D)))\).
For each \(i\in I\), the map \(\alpha_{\mathbf D}\) sends the construction-level component \(M_i\) into the reconstructed canonical component over \(P_i\), and sends \(e_i\) to the block unit of that component.
Therefore Proposition~\ref{prop:ambient-maps-vs-rigds-morphisms} yields a unique morphism
\[
\varepsilon_{\mathbf D}:=
\bigl(\kappa_{\mathbf D},(\mathrm{id}_{\mathbf M_i})_{i\in I}\bigr)
\colon \mathbf D\to \mathfrak D(\mathfrak M(\mathbf D))
\]
such that \(\mathfrak M(\varepsilon_{\mathbf D})=\alpha_{\mathbf D}\).
Likewise let \(\beta_{\mathbf D}:=\eta_{\mathfrak M(\mathbf D)}\colon \mathfrak M(\mathfrak D(\mathfrak M(\mathbf D)))\to \mathfrak M(\mathbf D)\).
For each \(i\in I\), the map \(\beta_{\mathbf D}\) sends the reconstructed canonical component indexed by \(P_i=\kappa_{\mathbf D}(i)\) onto the original construction-level component \(M_i\), and sends its block unit to \(e_i\).
Thus Proposition~\ref{prop:ambient-maps-vs-rigds-morphisms} yields a unique morphism
\[
\delta_{\mathbf D}\colon \mathfrak D(\mathfrak M(\mathbf D))\to \mathbf D
\]
such that \(\mathfrak M(\delta_{\mathbf D})=\beta_{\mathbf D}\).
Its induced index map is \(\kappa_{\mathbf D}^{-1}\), since the component over \(P_i\) is mapped onto \(M_i\).
Hence
\begin{align*}
\mathfrak M(\delta_{\mathbf D}\circ \varepsilon_{\mathbf D})
&=\beta_{\mathbf D}\circ\alpha_{\mathbf D}
 =\mathrm{id}_{\mathfrak M(\mathbf D)}
 =\mathfrak M(\mathrm{id}_{\mathbf D}),
\end{align*}
and similarly
\[
\mathfrak M(\varepsilon_{\mathbf D}\circ \delta_{\mathbf D})
=\alpha_{\mathbf D}\circ\beta_{\mathbf D}
=\mathrm{id}_{\mathfrak M(\mathfrak D(\mathfrak M(\mathbf D)))}
=\mathfrak M(\mathrm{id}_{\mathfrak D(\mathfrak M(\mathbf D))}).
\]
By the uniqueness clause in Proposition~\ref{prop:ambient-maps-vs-rigds-morphisms}, it follows that \(\delta_{\mathbf D}\circ \varepsilon_{\mathbf D}=\mathrm{id}_{\mathbf D}\) and \(\varepsilon_{\mathbf D}\circ \delta_{\mathbf D} =\mathrm{id}_{\mathfrak D(\mathfrak M(\mathbf D))}\).
Thus \(\varepsilon_{\mathbf D}\) is an isomorphism in \(\mathbf{RigDS}_{\mathrm{fin}}\).

To prove naturality, let \(\Phi\colon \mathbf D\to \mathbf E\) be a morphism in \(\mathbf{RigDS}_{\mathrm{fin}}\).
By the already proved naturality of \(\eta\) on the monoid side, the two ambient realizations satisfy
\[
\mathfrak M\bigl(\mathfrak D(\mathfrak M(\Phi))\circ \varepsilon_{\mathbf D}\bigr)
=
\eta_{\mathfrak M(\mathbf E)}^{-1}\circ \mathfrak M(\Phi)
=
\mathfrak M\bigl(\varepsilon_{\mathbf E}\circ \Phi\bigr).
\]
Both morphisms \(\mathfrak D(\mathfrak M(\Phi))\circ \varepsilon_{\mathbf D}\) and \(\varepsilon_{\mathbf E}\circ \Phi\) therefore realize the same typed ambient map.
By the uniqueness clause in Proposition~\ref{prop:ambient-maps-vs-rigds-morphisms}, they are equal.
Thus the family \((\varepsilon_{\mathbf D})\) is a natural isomorphism \(\mathrm{Id}_{\mathbf{RigDS}_{\mathrm{fin}}}\Rightarrow \mathfrak D\circ \mathfrak M\).

Therefore \(\mathfrak D\) and \(\mathfrak M\) are quasi-inverse equivalences of categories.
\end{proof}

\begin{example}\label{ex:block-preserving-not-unit-preserving}
In Definition~\ref{def:cat-lua-blk-finite}, the block-unit condition \(f(e_A)=e_{\sigma_f(A)}\) is not implied by the block-only requirement \(f(M_A)\subseteq N_{\sigma_f(A)}\) for \(A\in \Cm(\mathbf M)\).
Indeed, let \(\mathbf S:=\langle \{1,2\},\le,\max,1\rangle\).
Then \(\PosId(\mathbf S)=\{1,2\}\), \(\tau_{\mathbf S}(1)=1\), and \(\tau_{\mathbf S}(2)=2\).
Hence \(\Cm(\mathbf S)=\bigl\{\{1\},\{2\}\bigr\}\).
For \(A:=\{2\}\), the canonical block is \(M_A=\{2\}\) and its block unit is \(e_A=2\).

Let \(\mathbf T\) be the totally ordered monoid on \(\{1<2<3<4\}\), with identity \(1\), whose multiplication and local-unit map are
\[
\begin{array}{c|cccc}
\cdot &1&2&3&4\\ \hline
1&1&2&3&4\\
2&2&2&4&4\\
3&3&4&4&4\\
4&4&4&4&4
\end{array}
\qquad
\tau_{\mathbf T}(1)=1,\quad \tau_{\mathbf T}(2)=2,\quad
\tau_{\mathbf T}(3)=1,\quad \tau_{\mathbf T}(4)=4.
\]
A direct check gives \(\Cm(\mathbf T)=\bigl\{\{1,2,4\}\bigr\}\).
Write \(B:=\{1,2,4\}\).
Then \(N_B=T\) and \(e_B=1\).
Define \(f\colon \mathbf S\to \mathbf T\) by \(f(1):=1\) and \(f(2):=2\).
This map is isotone, unital, and multiplicative, because the restriction of the multiplication of \(\mathbf T\) to \(\{1,2\}\) is the max-monoid multiplication.
Since \(\mathbf T\) has only one canonical block, \(f\) automatically satisfies the block-only condition.
In particular, \(f(M_A)=\{2\}\subseteq N_B\).
However, \(f(e_A)=f(2)=2\ne 1=e_B\).
Thus the block-only condition does not force preservation of block units.
\end{example}

\section{A second categorical equivalence: \texorpdfstring{\(\tau\)}{tau}-compatible homomorphisms}
\label{sec:taucomp-categorical-equivalence}

The second categorical formulation is the intrinsic counterpart of the strict block one.
Here the morphisms are not required to preserve the individual canonical block units; they are required only to commute with the local-unit map \(\tau\).
Such \(\tau\)-compatible homomorphisms may collapse distinct canonical components, but rigidity prevents them from splitting a source component among several target components.
This gives the categorical form of the representation for morphisms defined directly by the local-unit structure.

\begin{definition}[\(\tau\)-compatible homomorphism]
\label{def:taucomp-homomorphism}
Let \(\mathbf M\) and \(\mathbf N\) be finite local-unit-aligned totally ordered monoids.
An isotone unital monoid homomorphism \(f\colon \mathbf M\to \mathbf N \) is called \emph{\(\tau\)-compatible} if, for every \(x\in M\),
\[
\tau_{\mathbf N}(f(x))=f(\tau_{\mathbf M}(x)).
\]
\end{definition}

\begin{lemma}\label{lem:taucomp-layer-preservation}
Let \(f\colon \mathbf M\to \mathbf N\) be a \(\tau\)-compatible homomorphism.
Then:
\begin{enumerate}
\item\label{item:positiveLESZ} if \(u\in\PosId(\mathbf M)\), then \(f(u)\in\PosId(\mathbf N)\);
\item for every \(u\in\PosId(\mathbf M)\), \(f[\layer{u}^{\mathbf M}]\subseteq \layer{f(u)}^{\mathbf N}; \)
\item \(\tau\)-compatible homomorphisms are closed under composition.
\end{enumerate}
\end{lemma}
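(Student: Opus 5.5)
The three items are short consequences of the defining equation \(\tau_{\mathbf N}f=f\tau_{\mathbf M}\), together with the fact that \(\tau\)-values are exactly positive idempotents. I will prove them in order.

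For item~\ref{item:positiveLESZ} there are two routes, and I will use the first since it needs no structural input beyond the definition. Let \(u\in\PosId(\mathbf M)\). First I check that \(\tau_{\mathbf M}(u)=u\). Since \(u^2=u\), we have \(u\in R(u)\). If \(z\in R(u)\), then \(uz\le u\), and also \(uz\ge ue=u\) because \(z\ge e\) is not known a priori. Instead I argue from the other side: \(\tau(u)\) satisfies \(u\tau(u)=u\), so \(\tau(u)\ge u\) since \(u\in R(u)\) and \(\tau(u)\) is the greatest element of \(R(u)\). Conversely, \(e\le u\) gives \(\tau(u)=e\tau(u)\le u\tau(u)=u\) by isotonicity. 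Hence \(\tau_{\mathbf M}(u)=u\). Then \(\tau\)-compatibility gives \(f(u)=f(\tau_{\mathbf M}(u))=\tau_{\mathbf N}(f(u))\). Every \(\tau\)-value is a positive idempotent (recalled in Section~\ref{sec:recap}), so \(f(u)\in\PosId(\mathbf N)\).

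The second route does not use \(\tau\) at all. Multiplicativity gives \(f(u)^2=f(u^2)=f(u)\), and isotonicity with unitality gives \(e_{\mathbf N}=f(e_{\mathbf M})\le f(u)\).

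For item~(2), let \(x\in\layer{u}^{\mathbf M}\), so that \(\tau_{\mathbf M}(x)=u\). Then \(\tau_{\mathbf N}(f(x))=f(\tau_{\mathbf M}(x))=f(u)\), which says \(f(x)\in\layer{f(u)}^{\mathbf N}\). By item~\ref{item:positiveLESZ}, \(f(u)\) is a positive idempotent, so this singleton layer is well defined.

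For item~(3), let \(f\colon\mathbf M\to\mathbf N\) and \(g\colon\mathbf N\to\mathbf P\) be \(\tau\)-compatible. The composite \(g\circ f\) is an isotone unital homomorphism. Its \(\tau\)-compatibility follows from
\[
\tau_{\mathbf P}(g(f(x)))=g(\tau_{\mathbf N}(f(x)))=g(f(\tau_{\mathbf M}(x))).
\]
I do not expect any step to be a real obstacle. The only point needing care is the identity \(\tau(u)=u\) for positive idempotents in the first route, and the second route avoids it entirely.
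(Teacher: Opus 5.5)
Your proposal is correct and is essentially the paper's proof: items (2) and (3) are argued exactly as there, and your second route for item (1), namely $f(u)^2=f(u^2)=f(u)$ and $e_{\mathbf N}=f(e_{\mathbf M})\le f(u)$, is precisely the paper's argument. Your first route through $\tau_{\mathbf M}(u)=u$ (using $u\in R(u)$ and $\tau(u)=e\tau(u)\le u\tau(u)=u$) is also valid but uses more structure than the $\tau$-free argument, and the abandoned aside ``$uz\ge ue=u$ because $z\ge e$ is not known a priori'' should simply be deleted.
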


\begin{proof}
If \(u\in\PosId(\mathbf M)\), then \(u^2=u\) and \(u\ge e_{\mathbf M}\).
Hence \(f(u)^2=f(u)\), and, since \(f\) is isotone and unital, \(f(u)\ge f(e_{\mathbf M})=e_{\mathbf N}\).
Thus \(f(u)\) is a positive idempotent of \(\mathbf N\).

If \(x\in \layer{u}^{\mathbf M}\), then \(\tau_{\mathbf M}(x)=u\).
By \(\tau\)-compatibility, \(\tau_{\mathbf N}(f(x))=f(\tau_{\mathbf M}(x))=f(u),\) so \(f(x)\in \layer{f(u)}^{\mathbf N}\).

Finally, if \(f\colon\mathbf M\to\mathbf N\) and \(g\colon\mathbf N\to\mathbf P\) are \(\tau\)-compatible, then \(\tau_{\mathbf P}(g(f(x)))=g(\tau_{\mathbf N}(f(x))) =g(f(\tau_{\mathbf M}(x))),\) so \(g\circ f\) is \(\tau\)-compatible.
\end{proof}

\begin{remark}\label{rem:taucomp-idempotent-collapse}
\(\tau\)-compatibility preserves exact local-unit information, but it need not preserve it injectively.
Thus \(u\neq v\) in \(\PosId(\mathbf M)\) may satisfy \(f(u)=f(v)\).
In that case the exact layers \(\layer{u}^{\mathbf M}\) and \(\layer{v}^{\mathbf M}\) are both mapped into \(\layer{f(u)}^{\mathbf N}\).
The direct-system formulation below records the same phenomenon by allowing the component index map to be non-injective.
Of course, not every family of componentwise $\tau$-compatible maps over a non-injective index map gives a morphism: the piecewise map must still respect the directed-lexicographic order and the reconstructed multiplication.
\end{remark}

\begin{lemma}[Positive idempotents detect the component order]
\label{lem:positive-idempotents-detect-component-order}
Let \(\mathbf N\) be a finite local-unit-aligned totally ordered monoid, let \(C,D\in\Cm(\mathbf N)\), and let \(p\in C\) and \(q\in D\) be positive idempotents.
If \(p\le q\), then \(C\le_{\min}D. \)
\end{lemma}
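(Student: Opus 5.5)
The plan is to use Lemma~\ref{lem:block-join-min}, which says that \(C\vee D\) is the unique block containing \(\max\{e_C,e_D\}\) and that \(\le_{\min}\) is a total order. Since \(\le_{\min}\) is total, it suffices to rule out \(D<_{\min}C\) when \(p\le q\). Suppose \(D<_{\min}C\); then \(e_D<e_C\), and \(C\vee D=C\). I will derive the contradiction \(q\in C\).

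The key step is to compute \(\tau(pq)\). Positive idempotents multiply as the maximum, so \(pq=\max\{p,q\}=q\). Moreover \(\tau(q)=q\): for an idempotent \(q\ge e\), every \(z\) with \(qz\le q\) satisfies \(z\le qz\le q\), because \(z=ez\le qz\); and \(q\) itself lies in \(R(q)\). So \(\tau(pq)=q\). Now \(p\in\layer{C}\) and \(q\in\layer{D}\), since \(\tau(p)=p\in C\) and \(\tau(q)=q\in D\). Hence \(q=\tau(pq)\in\tprod{C}{D}\).

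By \(\tau\)-multiplication-coherence, \(\tprod{C}{D}\) lies in the unique block \(C\vee D\), which is \(C\) under our assumption. Thus \(q\in C\cap D\), forcing \(C=D\), which contradicts \(D<_{\min}C\). Therefore \(C\le_{\min}D\).

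The only delicate point is the identification \(C\vee D=C\) from \(e_D<e_C\). This comes directly from Lemma~\ref{lem:block-join-min}: the block containing \(\max\{e_C,e_D\}=e_C\) is \(C\). The rest is routine.
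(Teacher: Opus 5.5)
Your proof is correct, but it takes a different route from the paper's. The paper argues through the order structure. Assuming \(D<_{\min}C\), it applies the reversed clause of Theorem~\ref{thm:canonical-direct-system}\eqref{item:direct-order-recovery} to turn \(p\le q\) into \(p<_C\rho^{\mathbf N}_{D,C}(q)\). Rigidity (Corollary~\ref{cor:canonical-transition-maps-unit-constant}) then replaces \(\rho^{\mathbf N}_{D,C}(q)\) by \(e_C\). The resulting \(p<e_C\) contradicts \(e_C=\min C\le p\), using \(C=\PosId(\mathbf N_C)\) from Lemma~\ref{lem:intrinsic-tau-on-component}.

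You argue through the multiplicative structure instead. The facts \(pq=q\) and \(\tau(q)=q\) put \(q\) into \(\tprod{C}{D}\subseteq C\vee D\), and Lemma~\ref{lem:block-join-min} identifies \(C\vee D\) as the \(\le_{\min}\)-larger block. Your route needs neither the order-recovery theorem nor unit-constancy of the transition maps, only the definition of \(\tau\)-multiplication-coherence and Lemma~\ref{lem:block-join-min}. In effect it extends the first assertion of that lemma from block units to arbitrary positive idempotents: the block containing \(\max\{p,q\}\) is \(C\vee D\). It can also be stated without contradiction, since \(q\in D\cap(C\vee D)\) forces \(C\vee D=D=\max_{\le_{\min}}\{C,D\}\).

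The paper's version instead shows how rigidity governs the cross-level order, a theme it reuses throughout the later morphism arguments. The cost is that it leans on more of the recalled structural machinery.
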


\begin{proof}
If \(C=D\), there is nothing to prove.
Suppose, towards a contradiction, that \(D<_{\min}C\).
Since \(p\in N_C\) and \(q\in N_D\), Theorem~\ref{thm:canonical-direct-system}\eqref{item:direct-order-recovery} gives \(p<\rho^{\mathbf N}_{D,C}(q)\) inside the component \(\mathbf N_C\).
By Corollary~\ref{cor:canonical-transition-maps-unit-constant}, the transition map \(\rho^{\mathbf N}_{D,C}\) is unit-constant, so \(\rho^{\mathbf N}_{D,C}(q)=e_C\).
Hence \(p<e_C\).
But \(p\in C=\PosId(\mathbf N_C)\), by Lemma~\ref{lem:intrinsic-tau-on-component}, and the identity of \(\mathbf N_C\) is \(e_C=\min C\).
Therefore \(e_C\le p\), a contradiction.
Thus \(D<_{\min}C\) cannot occur, and since the component order is total, \(C\le_{\min}D\).
\end{proof}

\begin{proposition}[\(\tau\)-compatible homomorphisms collapse canonical components]
\label{prop:taucomp-homomorphisms-collapse-components}
Let \(f\colon \mathbf M\to \mathbf N \) be a \(\tau\)-compatible homomorphism of finite local-unit-aligned totally ordered monoids.
Then for every \(A\in\Cm(\mathbf M)\) there exists a unique block \(\sigma_f(A)\in\Cm(\mathbf N)\) such that \(f[A]\subseteq \sigma_f(A). \)
Consequently, \(f[M_A]\subseteq N_{\sigma_f(A)}. \)
Moreover, the induced map \(\sigma_f\colon \Cm(\mathbf M)\to\Cm(\mathbf N) \) is isotone.
\end{proposition}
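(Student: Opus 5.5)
The plan is to pull back the canonical partition of \(\mathbf N\) along \(f\) and use the minimality of \(\Cm(\mathbf M)\). By Lemma~\ref{lem:taucomp-layer-preservation}\eqref{item:positiveLESZ}, \(f\) maps \(\PosId(\mathbf M)\) into \(\PosId(\mathbf N)\). Hence the nonempty sets among
\[
f^{-1}[C]\cap\PosId(\mathbf M),\qquad C\in\Cm(\mathbf N),
\]
form a partition \(\mathcal P_f\) of \(\PosId(\mathbf M)\). I will show that \(\mathcal P_f\) is \(\tau\)-multiplication-coherent. Since \(\Cm(\mathbf M)\) is the finest such partition, it refines \(\mathcal P_f\). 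So each block \(A\in\Cm(\mathbf M)\) lies inside a single fibre \(f^{-1}[C]\). That \(C\) is then the desired \(\sigma_f(A)\). It is unique because \(A\neq\emptyset\) and the blocks of \(\Cm(\mathbf N)\) are disjoint.

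\textbf{Coherence of \(\mathcal P_f\).} Let \(A'=f^{-1}[C]\cap\PosId(\mathbf M)\) and \(B'=f^{-1}[D]\cap\PosId(\mathbf M)\). Take \(x\in\layer{A'}\) and \(y\in\layer{B'}\). By \(\tau\)-compatibility, \(\tau_{\mathbf N}(f(x))=f(\tau_{\mathbf M}(x))\in C\), so \(f(x)\in N_C\); similarly \(f(y)\in N_D\). Then
\[
f(\tau_{\mathbf M}(xy))=\tau_{\mathbf N}(f(xy))=\tau_{\mathbf N}(f(x)f(y))\in\tprod{C}{D}.
\]
By coherence of \(\Cm(\mathbf N)\), the set \(\tprod{C}{D}\) lies in a unique block \(E=C\vee D\). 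Hence \(\tau_{\mathbf M}(xy)\in f^{-1}[E]\) for all such \(x,y\), so \(\tprod{A'}{B'}\) is contained in the block \(f^{-1}[E]\cap\PosId(\mathbf M)\) of \(\mathcal P_f\). The containing block is unique because \(\tprod{A'}{B'}\) is nonempty (the layers are nonempty, as \(u\in\layer{u}\)) and blocks are disjoint.

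\textbf{Main obstacle.} The delicate point is the phrase ``finest \(\tau\)-multiplication-coherent partition''. I need it in the strong sense: \(\Cm(\mathbf M)\) refines \(\emph{every}\) coherent partition. This is presumably how it is established in \cite{JeneiLUARepresentation}, where the existence of a finest partition implicitly requires closure of coherent partitions under meets. I would invoke it in that form. If only minimality among coherent partitions were available, I would instead intersect \(\Cm(\mathbf M)\) with \(\mathcal P_f\), check that the common refinement is still coherent, and conclude by minimality that it equals \(\Cm(\mathbf M)\).

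\textbf{Remaining two claims.} For \(x\in M_A\), we have \(\tau_{\mathbf N}(f(x))=f(\tau_{\mathbf M}(x))\in f[A]\subseteq\sigma_f(A)\), so \(f[M_A]\subseteq N_{\sigma_f(A)}\). For isotonicity, let \(A\le_{\min}B\), i.e.\ \(e_A\le e_B\) by Lemma~\ref{lem:block-join-min}. Then \(f(e_A)\le f(e_B)\). Here \(f(e_A)\in\sigma_f(A)\) and \(f(e_B)\in\sigma_f(B)\) are positive idempotents of \(\mathbf N\). Lemma~\ref{lem:positive-idempotents-detect-component-order} then gives \(\sigma_f(A)\le_{\min}\sigma_f(B)\).
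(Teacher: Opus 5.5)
Your proof is correct. It runs on the same coherence computation as the paper's, but applies the minimality argument at a different level.

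The paper fixes a single block \(A\) and partitions only \(A\), according to which block of \(\Cm(\mathbf N)\) contains \(f(u)\). It shows that this partition is \(\tau\)-multiplication-coherent inside the component \(\mathbf M_A\), transporting products via Lemma~\ref{lem:intrinsic-tau-on-component}. It then concludes by Corollary~\ref{cor:canonical-components-already-terminal-direct}: \(\mathbf M_A\) is \(\tau\)-multiplication-cohesive, so the partition is trivial. You instead pull back all of \(\Cm(\mathbf N)\) to a partition \(\mathcal P_f\) of \(\PosId(\mathbf M)\), check coherence with ambient products only, and invoke the defining minimality of \(\Cm(\mathbf M)\).

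The two routes need different things. The paper's local version only needs the one-block partition of \(A\) to be the unique coherent one. Mere minimality gives this, since every partition of \(A\) refines the one-block partition. The price is that it relies on the imported cohesiveness corollary and on the intrinsic/ambient agreement lemma. Your global version avoids both, and is essentially the argument one would use to prove that corollary. Its cost is that \(\Cm(\mathbf M)\) must refine \emph{every} coherent partition.

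You flagged this cost correctly, and your fallback works. The common refinement of two coherent partitions is again coherent. Indeed, \(\layer{P\cap Q}=\layer{P}\cap\layer{Q}\) gives \(\tprod{(P\cap Q)}{(P'\cap Q')}\subseteq P''\cap Q''\), where \(P''\) and \(Q''\) are the blocks containing \(\tprod{P}{P'}\) and \(\tprod{Q}{Q'}\), and this product set is nonempty. So your argument goes through under either reading of ``finest''. Your treatment of the layer inclusion and of isotonicity of \(\sigma_f\) coincides with the paper's.
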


\begin{proof}
Fix \(A\in\Cm(\mathbf M)\).
We first prove that all idempotents of \(A\) are sent into a single canonical block of \(\mathbf N\).

By Lemma~\ref{lem:taucomp-layer-preservation}\eqref{item:positiveLESZ}, each \(f(u)\), \(u\in A\), is a positive idempotent of \(\mathbf N\), and hence lies in a unique block of \(\Cm(\mathbf N)\).
Define
\[
\beta_A\colon A\to \Cm(\mathbf N)
\]
by letting \(\beta_A(u)\) be the unique canonical block of \(\mathbf N\) containing \(f(u)\).
Let \(\mathcal R_A\) be the partition of \(A\) into the nonempty fibres of \(\beta_A\), equivalently
\[
u\equiv v
\quad\Longleftrightarrow\quad
f(u)\text{ and }f(v)\text{ belong to the same block of }\Cm(\mathbf N).
\]
We show that \(\mathcal R_A\) is a \(\tau\)-multiplication-coherent partition of \(\PosId(\mathbf M_A)=A\), where the \(\tau\)-saturated products are computed in \(\mathbf M_A\).

Let \(U,V\in\mathcal R_A\), and let \(B_U,B_V\in\Cm(\mathbf N)\) be the unique blocks with \(f[U]\subseteq B_U\) and \(f[V]\subseteq B_V\).
First note that \((\tprod{U}{V})^{\mathbf M_A}\) is nonempty.
Indeed, choose \(u\in U\) and \(v\in V\).
Since \(u\) and \(v\) are positive idempotents of \(\mathbf M_A\), we have \(\tau_{\mathbf M_A}(u)=u\) and \(\tau_{\mathbf M_A}(v)=v\); hence \(\tau_{\mathbf M_A}(uv)\in(\tprod{U}{V})^{\mathbf M_A}\).

Take \(w\in(\tprod{U}{V})^{\mathbf M_A}\).
By Lemma~\ref{lem:intrinsic-tau-on-component}, the intrinsic and ambient \(\tau\)-saturated products agree on \(A\), so \(w=\tau_{\mathbf M}(xy)\) for some \(x,y\in M_A\) with \(\tau_{\mathbf M}(x)\in U\) and \(\tau_{\mathbf M}(y)\in V\).
By \(\tau\)-compatibility,
\[
\tau_{\mathbf N}(f(x))=f(\tau_{\mathbf M}(x))\in B_U,
\qquad
\tau_{\mathbf N}(f(y))=f(\tau_{\mathbf M}(y))\in B_V.
\]
Thus \(f(x)\in N_{B_U}\) and \(f(y)\in N_{B_V}\).
Since \(\Cm(\mathbf N)\) is \(\tau\)-multiplication-coherent, \(\tau_{\mathbf N}(f(x)f(y))\in B_U\vee B_V\).
Using multiplicativity and \(\tau\)-compatibility again,
\[
f(w)
=
f(\tau_{\mathbf M}(xy))
=
\tau_{\mathbf N}(f(xy))
=
\tau_{\mathbf N}(f(x)f(y))
\in B_U\vee B_V.
\]
Hence every \(w\in(\tprod{U}{V})^{\mathbf M_A}\) lies in the fibre \(\beta_A^{-1}(B_U\vee B_V)\).
Since this fibre contains such a \(w\), it is nonempty, and by definition it is a single \(\mathcal R_A\)-class.
Therefore \(\mathcal R_A\) is \(\tau\)-multiplication-coherent inside \(\mathbf M_A\).

By Corollary~\ref{cor:canonical-components-already-terminal-direct}, the component \(\mathbf M_A\) is \(\tau\)-multiplication-cohesive.
Thus its only \(\tau\)-multiplication-coherent partition is the trivial one.
Therefore \(\mathcal R_A=\{A\}\).
Hence \(f[A]\) is contained in a unique block of \(\Cm(\mathbf N)\), which we denote by \(\sigma_f(A)\).

Now let \(x\in M_A\).
Then \(\tau_{\mathbf M}(x)\in A\), so \(f(\tau_{\mathbf M}(x))\in\sigma_f(A)\).
By \(\tau\)-compatibility, \(\tau_{\mathbf N}(f(x))=f(\tau_{\mathbf M}(x))\in\sigma_f(A),\) and hence \(f(x)\in N_{\sigma_f(A)}\).
This proves \(f[M_A]\subseteq N_{\sigma_f(A)}\).

It remains to prove that \(\sigma_f\) is isotone.
Let \(A,B\in\Cm(\mathbf M)\) with \(A\le_{\min}B\).
Then \(e_A\le e_B\).
Since \(f\) is isotone, \(f(e_A)\le f(e_B).\)
Also \(e_A,e_B\in\PosId(\mathbf M)\), so Lemma~\ref{lem:taucomp-layer-preservation}\eqref{item:positiveLESZ} applied to \(f\) shows that \(f(e_A)\) and \(f(e_B)\) are positive idempotents of \(\mathbf N\).
By the definition of \(\sigma_f\), they satisfy \(f(e_A)\in\sigma_f(A)\) and \(f(e_B)\in\sigma_f(B)\).
Therefore Lemma~\ref{lem:positive-idempotents-detect-component-order} gives \(\sigma_f(A)\le_{\min}\sigma_f(B)\).
Thus \(\sigma_f\) is isotone.
\end{proof}

\begin{corollary}[Component restrictions of \(\tau\)-compatible homomorphisms]
\label{cor:taucomp-component-restrictions}
Let \(f\colon\mathbf M\to\mathbf N\) be \(\tau\)-compatible, let \(A\in\Cm(\mathbf M)\), and put \(B:=\sigma_f(A)\).
Then \(f_A:=f\!\upharpoonright_{M_A}\colon M_A\to N_B \) is isotone and multiplicative, and it commutes with the intrinsic local-unit maps:
\[
\tau_{\mathbf N_B}(f_A(x)) = f_A(\tau_{\mathbf M_A}(x))
\qquad (x\in M_A).
\]
Moreover, \(\PosId(\mathbf N_B)=B\), and \(f_A(e_A)=f(e_A)\in B. \)
Thus \(f_A\), as a map between the component monoids \(\mathbf M_A\) and \(\mathbf N_B\), need not be unital; it is unital if and only if \(f(e_A)=e_B\).
\end{corollary}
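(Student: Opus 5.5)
The plan is to read off each assertion of the corollary from Proposition~\ref{prop:taucomp-homomorphisms-collapse-components} and the component-level facts recalled in Section~\ref{sec:recap}, without building anything new.

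First, Proposition~\ref{prop:taucomp-homomorphisms-collapse-components} gives \(f[M_A]\subseteq N_B\). So \(f_A\) is a well-defined map \(M_A\to N_B\).

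Next, the orders and multiplications of the component monoids \(\mathbf M_A\) and \(\mathbf N_B\) are the restrictions of the ambient ones. Hence isotonicity and multiplicativity of \(f_A\) are inherited directly from \(f\).

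For the \(\tau\)-commutation, apply Theorem~\ref{thm:component-local-unit-aligned-monoid} on both sides. For \(x\in M_A\) one has \(\tau_{\mathbf M_A}(x)=\tau_{\mathbf M}(x)\in A\subseteq M_A\). Since \(f_A(x)\in N_B\), also \(\tau_{\mathbf N_B}(f_A(x))=\tau_{\mathbf N}(f(x))\). The required identity is then exactly the \(\tau\)-compatibility equation of Definition~\ref{def:taucomp-homomorphism}, restricted to \(M_A\).

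The equality \(\PosId(\mathbf N_B)=B\) is Lemma~\ref{lem:intrinsic-tau-on-component} applied to \(\mathbf N\). Since \(e_A\in A\), Proposition~\ref{prop:taucomp-homomorphisms-collapse-components} gives \(f_A(e_A)=f(e_A)\in f[A]\subseteq B\).

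For the final clause, the identity of \(\mathbf N_B\) is \(e_B=\min B\). So \(f_A\) is unital as a map \(\mathbf M_A\to\mathbf N_B\) precisely when \(f(e_A)=e_B\); this is tautological.

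To justify the phrase ``need not be unital'', I would point to Example~\ref{ex:block-preserving-not-unit-preserving}, where \(f(2)=2\ne 1=e_B\). It remains to check that the map \(f\) there is \(\tau\)-compatible:
\[
\tau_{\mathbf T}(f(1))=1=f(\tau_{\mathbf S}(1)),
\qquad
\tau_{\mathbf T}(f(2))=\tau_{\mathbf T}(2)=2=f(\tau_{\mathbf S}(2)).
\]

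I expect no step to be a genuine obstacle. The only point needing care is the restriction of \(\tau\) to components, and Theorem~\ref{thm:component-local-unit-aligned-monoid} already provides it.
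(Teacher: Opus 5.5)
Your proposal is correct and follows the paper's proof essentially step by step. You take the inclusion \(f[M_A]\subseteq N_B\) from Proposition~\ref{prop:taucomp-homomorphisms-collapse-components}, inherit order and multiplication from \(f\), and use Theorem~\ref{thm:component-local-unit-aligned-monoid} for the \(\tau\)-commutation. Your explicit citation of Lemma~\ref{lem:intrinsic-tau-on-component} for \(\PosId(\mathbf N_B)=B\) and your direct check of \(\tau\)-compatibility in Example~\ref{ex:block-preserving-not-unit-preserving} are accurate additions; the paper only asserts the latter, in Remark~\ref{rem:taucomp-vs-block-unit-preservation}.
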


\begin{proof}
The inclusion \(f_A[M_A]\subseteq N_B\) is Proposition~\ref{prop:taucomp-homomorphisms-collapse-components}.
Isotonicity and multiplicativity are inherited from \(f\), because multiplication and order inside \(\mathbf M_A\) and \(\mathbf N_B\) are the restrictions of the ambient ones.
By Theorem~\ref{thm:component-local-unit-aligned-monoid}, the intrinsic local-unit maps of the components are the ambient local-unit maps restricted to the components.
Hence, for \(x\in M_A\),
\[
\tau_{\mathbf N_B}(f_A(x))
=\tau_{\mathbf N}(f(x))
=f(\tau_{\mathbf M}(x))
=f_A(\tau_{\mathbf M_A}(x)).
\]
Finally, \(e_A\in A\), so \(f(e_A)\in B\) by the definition of \(B\).
Since \(e_B\) is the identity of \(\mathbf N_B\), the final assertion is immediate.
\end{proof}

\begin{remark}\label{rem:taucomp-vs-block-unit-preservation}
In contrast with Definition~\ref{def:cat-lua-blk-finite}, a \(\tau\)-compatible homomorphism need not satisfy \(f(e_A)=e_{\sigma_f(A)}\) for every canonical block \(A\).
Corollary~\ref{cor:taucomp-component-restrictions} shows only that \(f(e_A)\in \sigma_f(A)=\PosId(\mathbf N_{\sigma_f(A)})\), so the restriction \(f_A\colon M_A\to N_{\sigma_f(A)}\) is multiplicative and commutes with the intrinsic local-unit maps, but need not be unital as a map between component monoids.

This failure is genuine.
In Example~\ref{ex:block-preserving-not-unit-preserving}, the map \(f\colon\mathbf S\to\mathbf T\) is \(\tau\)-compatible, but for \(A=\{2\}\in\Cm(\mathbf S)\) and \(B=\{1,2,4\}\in\Cm(\mathbf T)\) one has \(f(e_A)=f(2)=2\neq 1=e_B\).
Thus \(\tau\)-compatibility preserves the global unit and sends each source block unit to a positive idempotent in the appropriate target component, but it does not force that idempotent to be the target block unit.
\end{remark}

\begin{example}[A strict block morphism need not be \(\tau\)-compatible]
\label{ex:strict-block-not-tau-compatible}
Let \(\mathbf T\) be the four-element monoid from Example~\ref{ex:block-preserving-not-unit-preserving}.
Thus
\[
\Cm(\mathbf T)=\bigl\{\{1,2,4\}\bigr\},
\]
so \(\mathbf T\) has only one canonical block, whose block unit is \(1\).

Define \(g\colon \mathbf T\to\mathbf T\) by
\[
g(1)=1,\qquad g(2)=1,\qquad g(3)=2,\qquad g(4)=2.
\]
The map \(g\) is isotone and unital.
It is also multiplicative.
Indeed, the claim is immediate if one factor is \(1\).
If both factors lie in \(\{2,3,4\}\), then their product is \(2\) only in the case \((2,2)\), and is \(4\) otherwise; hence the displayed definition gives \(g(xy)=g(x)g(y)\) in all cases.

Since \(\mathbf T\) has only one canonical block and \(g(1)=1\), the map \(g\) is a strict block morphism \(\mathbf T\to\mathbf T\).
However, it is not \(\tau\)-compatible, because
\[
\tau_{\mathbf T}(g(3))=\tau_{\mathbf T}(2)=2,
\qquad
g(\tau_{\mathbf T}(3))=g(1)=1.
\]
Thus \(\tau_{\mathbf T}g\ne g\tau_{\mathbf T}\).
\end{example}

\begin{definition}\label{def:lua-taucomp-category}
Let \(\mathbf{LUA}^{\tau\mathrm{c}}_{\mathrm{fin}}\) be the category whose objects are finite local-unit-aligned totally ordered monoids and whose morphisms are \(\tau\)-compatible homomorphisms.
Composition is ordinary composition of maps.
\end{definition}

\begin{proposition}\label{prop:lua-taucomp-category}
The objects and morphisms of Definition~\ref{def:lua-taucomp-category} form a category.
\end{proposition}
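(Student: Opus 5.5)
The statement is routine, and I would prove it by checking the category axioms directly, treating identities and composition separately and deferring associativity to ordinary function composition.

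For identities, I would fix a finite local-unit-aligned totally ordered monoid \(\mathbf M\) and observe that \(\mathrm{id}_M\) is an isotone unital monoid homomorphism. It satisfies \(\tau_{\mathbf M}(\mathrm{id}_M(x))=\tau_{\mathbf M}(x)=\mathrm{id}_M(\tau_{\mathbf M}(x))\) for every \(x\in M\). Hence \(\mathrm{id}_M\) is \(\tau\)-compatible in the sense of Definition~\ref{def:taucomp-homomorphism}.

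For composition, I would take \(\tau\)-compatible homomorphisms \(f\colon\mathbf M\to\mathbf N\) and \(g\colon\mathbf N\to\mathbf P\). The composite \(g\circ f\) is isotone, unital and multiplicative, since each of these properties is preserved under composition of maps. Its \(\tau\)-compatibility is exactly the third item of Lemma~\ref{lem:taucomp-layer-preservation}, namely the chain of equalities \(\tau_{\mathbf P}(g(f(x)))=g(\tau_{\mathbf N}(f(x)))=g(f(\tau_{\mathbf M}(x)))\).

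Associativity and the left and right identity laws then hold because composition is ordinary composition of set maps. There is no genuine obstacle here. The only point needing care is to confirm that the definition of a morphism consists solely of the closed conditions (isotone, unital, multiplicative, \(\tau\)-commuting). Unlike the strict block category, there is no auxiliary block datum such as \(\sigma_f\) to keep track of. The induced block map of Proposition~\ref{prop:taucomp-homomorphisms-collapse-components} is derived from \(f\), not part of the morphism, so no compatibility of such data under composition needs to be checked.
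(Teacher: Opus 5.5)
Your proposal is correct and follows the paper's proof essentially verbatim. Like the paper, you show that the identity map is $\tau$-compatible, take closure under composition from the third item of Lemma~\ref{lem:taucomp-layer-preservation}, and inherit the category axioms from ordinary composition of maps; your remarks that the composite stays an isotone unital homomorphism and that no block datum needs tracking are accurate and harmless additions.
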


\begin{proof}
The identity map is \(\tau\)-compatible.
Lemma~\ref{lem:taucomp-layer-preservation} shows that \(\tau\)-compatible homomorphisms are closed under composition.
The category axioms are therefore inherited from ordinary composition of maps.
\end{proof}

\begin{definition}[\(\tau\)-compatible collapse morphism of rigid direct systems]
\label{def:taucomp-collapse-rigds-morphism}
Let
\[
\mathbf D=\bigl((\mathbf M_i)_{i\in I},(\rho^{\mathbf D}_{i,k})_{i\le k}\bigr),
\qquad
\mathbf E=\bigl((\mathbf N_j)_{j\in J},(\rho^{\mathbf E}_{j,\ell})_{j\le \ell}\bigr)
\]
be rigid direct systems.
A \emph{\(\tau\)-compatible collapse morphism} \(\Phi\colon\mathbf D\to\mathbf E \) is a pair \(\Phi=(\sigma,(\Phi_i)_{i\in I}) \) where \(\sigma\colon I\to J\) is isotone and \(\Phi_i\colon M_i\to N_{\sigma(i)}\) is a map for each \(i\in I\), such that the piecewise map
\[
\widehat\Phi\colon\mathfrak M(\mathbf D)\to\mathfrak M(\mathbf E),
\qquad
\widehat\Phi\!\upharpoonright_{M_i}:=\Phi_i,
\]
is a \(\tau\)-compatible homomorphism.

The identity morphism is \((\mathrm{id}_I,(\mathrm{id}_{\mathbf M_i})_{i\in I}), \) and the composite of
\[
\Phi=(\sigma,(\Phi_i)_{i\in I})\colon\mathbf D\to\mathbf E,
\qquad
\Psi=(\nu,(\Psi_j)_{j\in J})\colon\mathbf E\to\mathbf F
\]
is
\[
\Psi\circ\Phi
:=
\bigl(\nu\circ\sigma,(\Psi_{\sigma(i)}\circ\Phi_i)_{i\in I}\bigr).
\]
\end{definition}

The definition of a \(\tau\)-compatible collapse morphism is intentionally given through its realization.
Thus the morphism class used below is not defined by imposing a strict componentwise block-preservation condition.
Rather, it records exactly those componentwise data whose realized map is a \(\tau\)-compatible homomorphism of the reconstructed monoids.
The next proposition shows that this apparently external requirement has an intrinsic direct-system formulation.

\begin{proposition}[Internal form of a \(\tau\)-compatible collapse morphism]
\label{prop:taucomp-collapse-internal-consequences}
Let
\[
\mathbf D=\bigl((\mathbf M_i)_{i\in I},(\rho^{\mathbf D}_{i,k})_{i\le k}\bigr),
\qquad
\mathbf E=\bigl((\mathbf N_j)_{j\in J},(\rho^{\mathbf E}_{j,\ell})_{j\le \ell}\bigr)
\]
be rigid direct systems.
Let \(\sigma\colon I\to J\) be isotone, and for each \(i\in I\) let \(\Phi_i\colon M_i\to N_{\sigma(i)} \) be a map.
Put
\[
\widehat\Phi\colon\mathfrak M(\mathbf D)\to\mathfrak M(\mathbf E),
\qquad
\widehat\Phi\!\upharpoonright_{M_i}:=\Phi_i .
\]
Then \(\Phi=(\sigma,(\Phi_i)_{i\in I}) \) is a \(\tau\)-compatible collapse morphism if and only if the following conditions hold.

\begin{enumerate}
\taggeditem{\textup{(U)}}{item:tccm-unit}
\emph{Global unit.} If \(i_0=\min I\) and \(j_0=\min J\), then \(\sigma(i_0)=j_0\) and \(\Phi_{i_0}(e_{i_0})=e_{j_0}\).

\taggeditem{\textup{(C)}}{item:tccm-component}
\emph{Componentwise \(\tau\)-compatibility.} For each \(i\in I\), the map \(\Phi_i\colon \mathbf M_i\to \mathbf N_{\sigma(i)} \) is isotone and multiplicative, and it commutes with the intrinsic local-unit maps:
\[
\tau_{\mathbf N_{\sigma(i)}}(\Phi_i(x))
=
\Phi_i(\tau_{\mathbf M_i}(x))
\qquad(x\in M_i).
\]
The map \(\Phi_i\) is not required to be unital.

\taggeditem{\textup{(F)}}{item:tccm-forward}
\emph{Forward cross-level order.} Whenever \(i\le k\) and \(x\in M_i\),
\[
\rho^{\mathbf E}_{\sigma(i),\sigma(k)}(\Phi_i(x))
\le_{\sigma(k)}
\Phi_k(\rho^{\mathbf D}_{i,k}(x)).
\]

\taggeditem{\textup{(R)}}{item:tccm-reverse}
\emph{Reversed cross-level order.} Whenever \(i<k, x\in M_k, y\in M_i, x<_k \rho^{\mathbf D}_{i,k}(y), \) one has
\[
\begin{aligned}
\Phi_k(x)&\le_{\sigma(i)}\Phi_i(y)
&&\text{if } \sigma(i)=\sigma(k),\\
\Phi_k(x)&<_{\sigma(k)}
\rho^{\mathbf E}_{\sigma(i),\sigma(k)}(\Phi_i(y))
&&\text{if } \sigma(i)<\sigma(k).
\end{aligned}
\]

\taggeditem{\textup{(A)}}{item:tccm-collapse-absorption}
\emph{Collapse absorption.} Whenever two source components are collapsed into the same target component, cross-level products are absorbed in that target component: if \(i<k\) and \(\sigma(i)=\sigma(k)\), then
\[
\Phi_i(x)\Phi_k(y)=\Phi_k(y)=\Phi_k(y)\Phi_i(x)
\qquad (x\in M_i,\ y\in M_k),
\]
where the products are computed in \(\mathbf N_{\sigma(i)}=\mathbf N_{\sigma(k)}\).
\end{enumerate}
\end{proposition}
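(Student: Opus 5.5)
Both directions run through the explicit formulas of Theorem~\ref{thm:representation}\ref{item:representation-reconstruction-from-direct-system} and the working forms of the order in Remark~\ref{rem:directed-lex-order-working-forms}, together with rigidity: every proper transition map \(\rho^{\mathbf D}_{i,k}\) and \(\rho^{\mathbf E}_{j,\ell}\) is constant at the target unit. Throughout, ambient products in \(\mathfrak M(\mathbf D)\) of \(x\in M_i\), \(y\in M_k\) with \(i<k\) reduce to \(e_ky=y=ye_k\), and similarly in \(\mathfrak M(\mathbf E)\). The ambient local-unit map restricts to the intrinsic one on each constituent, so \(\tau\)-compatibility of \(\widehat\Phi\) is a purely componentwise equation.

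\emph{Necessity.} Assume \(\widehat\Phi\) is a \(\tau\)-compatible homomorphism.

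(U): \(\widehat\Phi(e_{i_0})=e_{j_0}\) because \(\widehat\Phi\) is unital. Hence \(e_{j_0}\in N_{\sigma(i_0)}\), which forces \(\sigma(i_0)=j_0\).

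(C): Isotonicity and multiplicativity of \(\Phi_i\) are inherited, since each \(M_i\) is a subsemigroup with the restricted order. The \(\tau\)-equation follows from restriction of local units on both sides.

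(F): For \(i\le k\), apply \(\widehat\Phi\) to \(\rho^{\mathbf D}_{i,k}(x)=xe_k\). Multiplicativity gives
\[
\Phi_k(\rho^{\mathbf D}_{i,k}(x))=\Phi_i(x)\,\Phi_k(e_k)
\]
in \(\mathfrak M(\mathbf E)\). If \(\sigma(i)<\sigma(k)\), this equals \(\Phi_k(e_k)\). Since \(e_k\) is a positive idempotent, \(\Phi_k(e_k)\) lies in \(\PosId(\mathbf N_{\sigma(k)})\), so it is \(\ge e_{\sigma(k)}=\rho^{\mathbf E}_{\sigma(i),\sigma(k)}(\Phi_i(x))\). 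If \(\sigma(i)=\sigma(k)\), the transition map is the identity. We then need \(\Phi_i(x)\le \Phi_i(x)\Phi_k(e_k)\), which holds by isotonicity since \(\Phi_k(e_k)\ge e_{\sigma(k)}\).

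(R): This is exactly the computation in Proposition~\ref{prop:ambient-maps-vs-rigds-morphisms}. The hypothesis \(x<_k\rho^{\mathbf D}_{i,k}(y)\) means \(x<y\) ambiently, isotonicity gives \(\widehat\Phi(x)\le\widehat\Phi(y)\), and this is then unfolded by the working forms of the order.

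(A): For \(i<k\), \(x\in M_i\), \(y\in M_k\), we have \(xy=y=yx\) in \(\mathfrak M(\mathbf D)\). Applying \(\widehat\Phi\) gives \(\Phi_i(x)\Phi_k(y)=\Phi_k(y)=\Phi_k(y)\Phi_i(x)\). When \(\sigma(i)=\sigma(k)\), this product is the internal product of \(\mathbf N_{\sigma(i)}\).

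\emph{Sufficiency.} Assume (U), (C), (F), (R), (A).

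Unitality is (U). The \(\tau\)-equation for \(\widehat\Phi\) is (C) read through the restriction of local units.

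Multiplicativity splits into three cases for \(x\in M_i\), \(y\in M_k\).
\begin{itemize}
\item If \(i=k\), it is (C).
\item If \(i<k\) and \(\sigma(i)=\sigma(k)\), then \(\widehat\Phi(xy)=\Phi_k(y)\), and this equals \(\Phi_i(x)\Phi_k(y)\) by (A); the case \(k<i\) is symmetric.
\item If \(i<k\) and \(\sigma(i)<\sigma(k)\), the ambient product in \(\mathfrak M(\mathbf E)\) is \(e_{\sigma(k)}\Phi_k(y)=\Phi_k(y)\), which equals \(\widehat\Phi(y)=\widehat\Phi(xy)\).
\end{itemize}
Note that multiplicativity never needs a unital \(\Phi_k\): rigidity replaces (DS3) by a trivial identity.

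Isotonicity follows the three-case pattern of Proposition~\ref{prop:realization-of-rigds-morphism}. The same-level case is (C), and the reversed case is (R). In the forward case, \(i<k\) and \(x\le y\) means \(e_k=\rho^{\mathbf D}_{i,k}(x)\le_k y\). By (C), \(\Phi_k(e_k)\le\Phi_k(y)\), and (F) gives \(\rho^{\mathbf E}_{\sigma(i),\sigma(k)}(\Phi_i(x))\le\Phi_k(e_k)\). Chaining these and applying Remark~\ref{rem:directed-lex-order-working-forms} yields \(\widehat\Phi(x)\le\widehat\Phi(y)\); when \(\sigma(i)=\sigma(k)\), the chain is read internally.

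\emph{Main obstacle.} The step needing most care is (F) in the necessity direction, together with its converse use in the forward isotonicity case. There unitality of \(\Phi_k\) fails, so one cannot simply copy (DS3). The plan is to replace equality by the inequality coming from \(\Phi_k(e_k)\in\PosId(\mathbf N_{\sigma(k)})\), hence \(\Phi_k(e_k)\ge e_{\sigma(k)}\). This uses Lemma~\ref{lem:taucomp-layer-preservation}\eqref{item:positiveLESZ}, or directly (C) together with Lemma~\ref{lem:intrinsic-tau-on-component}.
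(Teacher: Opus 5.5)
Your proof is correct and follows the paper's argument essentially step for step. It matches on (U), (C), (R) and (A) in the necessity direction, and on the same three-case splits (same level, forward, reversed) for isotonicity and multiplicativity in the sufficiency direction, with rigidity handling the cross-level products.

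The only local difference is necessity of (F). The paper gets it in one line by applying isotonicity of \(\widehat\Phi\) to the ambient inequality \(x\le\rho^{\mathbf D}_{i,k}(x)\) and unfolding with the working forms of the order. Your detour through multiplicativity and \(\Phi_k(e_k)\ge e_{\sigma(k)}\) is valid, but (F) is not the obstacle you anticipated.
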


\begin{proof}
Assume first that \(\Phi\) is a \(\tau\)-compatible collapse morphism.
Then \(\widehat\Phi\) is an isotone unital homomorphism commuting with the ambient local-unit maps.
Since the global units of \(\mathfrak M(\mathbf D)\) and \(\mathfrak M(\mathbf E)\) are \(e_{i_0}\) and \(e_{j_0}\), unitality gives \(\Phi_{i_0}(e_{i_0})=e_{j_0}\).
As this element lies both in \(N_{\sigma(i_0)}\) and in \(N_{j_0}\), disjointness of the construction components gives \(\sigma(i_0)=j_0\).
This proves condition~\ref{item:tccm-unit}.

Restricting the isotone multiplicative map \(\widehat\Phi\) to a single component gives isotonicity and multiplicativity of each \(\Phi_i\).
By Theorem~\ref{thm:representation}\ref{item:representation-direct-system-recovery}, in a rigid direct-system reconstruction the construction components are the canonical components.
Hence the intrinsic local-unit maps of \(\mathbf M_i\) and \(\mathbf N_{\sigma(i)}\) are the restrictions of the ambient local-unit maps.
Therefore, for \(x\in M_i\),
\[
\tau_{\mathbf N_{\sigma(i)}}(\Phi_i(x))
=
\tau_{\mathfrak M(\mathbf E)}(\widehat\Phi(x))
=
\widehat\Phi(\tau_{\mathfrak M(\mathbf D)}(x))
=
\Phi_i(\tau_{\mathbf M_i}(x)).
\]
This proves \ref{item:tccm-component}.

For \ref{item:tccm-forward}, let \(i\le k\) and \(x\in M_i\).
In \(\mathfrak M(\mathbf D)\), the directed lexicographic order gives \(x\le \rho^{\mathbf D}_{i,k}(x). \)
Applying the isotone map \(\widehat\Phi\), and then using \(\sigma(i)\le\sigma(k)\), gives
\[
\rho^{\mathbf E}_{\sigma(i),\sigma(k)}(\Phi_i(x))
\le_{\sigma(k)}
\Phi_k(\rho^{\mathbf D}_{i,k}(x)).
\]

For \ref{item:tccm-reverse}, suppose that \(i<k\), \(x\in M_k\), \(y\in M_i\), and \(x<_k\rho^{\mathbf D}_{i,k}(y)\).
By the working form of the directed lexicographic order, this is exactly the assertion that \(x\le y\) ambiently in \(\mathfrak M(\mathbf D)\).
Isotonicity of \(\widehat\Phi\) gives \(\Phi_k(x)\le \Phi_i(y)\) ambiently in \(\mathfrak M(\mathbf E)\).
If \(\sigma(i)=\sigma(k)\), this is the internal inequality \(\Phi_k(x)\le_{\sigma(i)}\Phi_i(y)\).
If \(\sigma(i)<\sigma(k)\), the same working form of the directed lexicographic order gives \(\Phi_k(x)<_{\sigma(k)} \rho^{\mathbf E}_{\sigma(i),\sigma(k)}(\Phi_i(y)). \)
Finally, since \(\mathbf D\) is rigid, all proper transition maps in \(\mathbf D\) are unit-constant.
Thus, for \(i<k\), \(xy=y=yx\) inside \(\mathfrak M(\mathbf D)\), for all \(x\in M_i\) and \(y\in M_k\).
Applying multiplicativity of \(\widehat\Phi\), and assuming \(\sigma(i)=\sigma(k)\), gives \(\Phi_i(x)\Phi_k(y)=\Phi_k(y)=\Phi_k(y)\Phi_i(x) \) inside the common target component.
This proves \ref{item:tccm-collapse-absorption}.

Conversely, assume that conditions \ref{item:tccm-unit}--\ref{item:tccm-collapse-absorption} hold.
We prove that \(\widehat\Phi\) is a \(\tau\)-compatible homomorphism.

Condition~\ref{item:tccm-unit} says precisely that \(\widehat\Phi\) preserves the global unit.
We next prove isotonicity.
Let \(u\in M_i\) and \(v\in M_k\), and suppose \(u\le v\) in \(\mathfrak M(\mathbf D)\).
If \(i=k\), isotonicity follows from \ref{item:tccm-component}.
If \(i<k\), then \(\rho^{\mathbf D}_{i,k}(u)\le_k v\).
By \ref{item:tccm-forward} and isotonicity of \(\Phi_k\),
\[
\rho^{\mathbf E}_{\sigma(i),\sigma(k)}(\Phi_i(u))
\le_{\sigma(k)}
\Phi_k(\rho^{\mathbf D}_{i,k}(u))
\le_{\sigma(k)}
\Phi_k(v),
\]
which gives \(\Phi_i(u)\le \Phi_k(v)\) in the directed lexicographic order of \(\mathfrak M(\mathbf E)\).
If \(k<i\), then \(u<_i\rho^{\mathbf D}_{k,i}(v)\).
Applying \ref{item:tccm-reverse}, with \(k\) now playing the lower-index role, gives exactly the working-form condition for \(\Phi_i(u)\le \Phi_k(v)\) in \(\mathfrak M(\mathbf E)\).
Thus \(\widehat\Phi\) is isotone.

We now prove multiplicativity.
Let \(u\in M_i\) and \(v\in M_k\).
If \(i=k\), multiplicativity follows from \ref{item:tccm-component}.
Suppose \(i<k\).
Since \(\mathbf D\) is rigid, \(uv=v=vu\) in \(\mathfrak M(\mathbf D)\).
If \(\sigma(i)<\sigma(k)\), rigidity of \(\mathbf E\) makes the proper transition map \(\rho^{\mathbf E}_{\sigma(i),\sigma(k)}\) unit-constant, so \(\Phi_i(u)\Phi_k(v)=\Phi_k(v)=\Phi_k(v)\Phi_i(u) \) in \(\mathfrak M(\mathbf E)\).
If \(\sigma(i)=\sigma(k)\), the same conclusion is exactly condition \ref{item:tccm-collapse-absorption}.
Hence
\[
\widehat\Phi(uv)=\widehat\Phi(u)\widehat\Phi(v)
\qquad\text{and}\qquad
\widehat\Phi(vu)=\widehat\Phi(v)\widehat\Phi(u).
\]
The case \(k<i\) is symmetric.
Therefore \(\widehat\Phi\) is multiplicative.

It remains to check compatibility with the ambient local-unit maps.
If \(x\in M_i\), then, again using Theorem~\ref{thm:representation}\ref{item:representation-direct-system-recovery} to identify construction components with canonical components in the two reconstructions,
\[
\tau_{\mathfrak M(\mathbf E)}(\widehat\Phi(x))
=
\tau_{\mathbf N_{\sigma(i)}}(\Phi_i(x))
=
\Phi_i(\tau_{\mathbf M_i}(x))
=
\widehat\Phi(\tau_{\mathfrak M(\mathbf D)}(x)),
\]
where the middle equality is condition \ref{item:tccm-component}.
Thus \(\widehat\Phi\) is a \(\tau\)-compatible homomorphism, and consequently \(\Phi\) is a \(\tau\)-compatible collapse morphism.
\end{proof}

The order of presentation is intentional.
One could instead take the conditions in Proposition~\ref{prop:taucomp-collapse-internal-consequences} as the primitive definition of a collapse morphism.
That formulation is useful for checking examples, but it would obscure the reason for this particular list of conditions.
The forward and reverse cross-level inequalities are exactly the direct-system translation of isotonicity of the realized map, while the collapse-absorption condition is exactly the extra multiplicativity requirement that appears when distinct source components are sent into the same target component.
Thus the realization-based definition puts the invariant first--namely, that the assembled piecewise map is a \(\tau\)-compatible homomorphism of the reconstructed monoids--and the proposition then identifies the corresponding internal direct-system criterion.
It also makes closure under composition immediate; a direct proof from the internal conditions alone would require a separate verification that the order and absorption clauses survive all possible compositions of index collapses.

\begin{proposition}\label{prop:rigds-taucomp-category}
Rigid direct systems and \(\tau\)-compatible collapse morphisms form a category.
We denote it by \(\mathbf{RigDS}^{\tau\mathrm{c}}_{\mathrm{fin}}. \)
\end{proposition}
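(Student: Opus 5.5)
The plan is to check the category axioms directly through the realization map \(\Phi\mapsto\widehat\Phi\). This is the reason the collapse morphisms were defined by their realization.

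\emph{Identities.} For a rigid direct system \(\mathbf D\), the pair \((\mathrm{id}_I,(\mathrm{id}_{\mathbf M_i})_{i\in I})\) has realization \(\widehat{\mathrm{id}}=\mathrm{id}_{\mathfrak M(\mathbf D)}\). This map is an isotone unital homomorphism and commutes trivially with \(\tau_{\mathfrak M(\mathbf D)}\). The index map \(\mathrm{id}_I\) is isotone, and each \(\mathrm{id}_{\mathbf M_i}\) maps \(M_i\) into \(M_{\mathrm{id}(i)}\). So the pair is a \(\tau\)-compatible collapse morphism.

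\emph{Composites.} Let \(\Phi=(\sigma,(\Phi_i))\colon\mathbf D\to\mathbf E\) and \(\Psi=(\nu,(\Psi_j))\colon\mathbf E\to\mathbf F\) be collapse morphisms. Then \(\nu\circ\sigma\) is isotone, being a composite of isotone maps. Each \(\Psi_{\sigma(i)}\circ\Phi_i\) maps \(M_i\) into \(N_{\sigma(i)}\) and then into \(P_{\nu(\sigma(i))}\), so its typing is correct.

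The key step is the identity
\[
\widehat{\Psi\circ\Phi}=\widehat\Psi\circ\widehat\Phi .
\]
It holds because both sides agree on every summand \(M_i\): for \(x\in M_i\) we have \(\widehat\Phi(x)=\Phi_i(x)\in N_{\sigma(i)}\), hence \(\widehat\Psi(\widehat\Phi(x))=\Psi_{\sigma(i)}(\Phi_i(x))\). The summands \(M_i\) are pairwise disjoint and cover \(\mathfrak M(\mathbf D)\), so this determines the maps completely.

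The right-hand side is a composite of two \(\tau\)-compatible homomorphisms. It is therefore isotone and unital, being a composite of such maps. It is also \(\tau\)-compatible by Lemma~\ref{lem:taucomp-layer-preservation}(3). Hence \(\Psi\circ\Phi\) is a \(\tau\)-compatible collapse morphism.

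\emph{Associativity and unit laws.} These follow from the componentwise formulas. Given a third morphism \(\Theta=(\mu,(\Theta_\ell))\colon\mathbf F\to\mathbf G\), both bracketings have index map \(\mu\circ\nu\circ\sigma\) and components \(\Theta_{\nu(\sigma(i))}\circ\Psi_{\sigma(i)}\circ\Phi_i\). Composing with identities leaves \(\sigma\) and each \(\Phi_i\) unchanged.

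I expect no real obstacle. The only point needing care is the bookkeeping that \(\Psi_{\sigma(i)}\) is the correct component to apply after \(\Phi_i\). That follows from the typing \(\Phi_i\colon M_i\to N_{\sigma(i)}\) in Definition~\ref{def:taucomp-collapse-rigds-morphism}.

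Equality of morphisms is equality of pairs, so a morphism need not be determined by its realization alone. Even so, associativity is verified at the level of the pairs themselves, so this causes no difficulty.
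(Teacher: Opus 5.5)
Your proof is correct and follows the paper's argument. Identities realize \(\mathrm{id}_{\mathfrak M(\mathbf D)}\); the declared composite realizes \(\widehat\Psi\circ\widehat\Phi\), which is \(\tau\)-compatible by Lemma~\ref{lem:taucomp-layer-preservation}; and associativity and the unit laws come from the componentwise composition formula.

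One small correction to your closing aside: a collapse morphism \emph{is} determined by its realization, as Proposition~\ref{prop:ambient-taucomp-maps-vs-taucomp-collapse} records. Each \(M_i\) is nonempty and the \(N_j\) are pairwise disjoint, so \(\sigma(i)\) is read off from \(\widehat\Phi(e_i)\). Your argument never uses the aside, since you verify the laws on pairs directly.
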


\begin{proof}
The identity morphism realizes the identity \(\tau\)-compatible homomorphism on \(\mathfrak M(\mathbf D)\).
If \(\Phi\colon\mathbf D\to\mathbf E\) and \(\Psi\colon\mathbf E\to\mathbf F\) are \(\tau\)-compatible collapse morphisms, then the piecewise realization of the declared composite is \(\widehat{\Psi\circ\Phi}=\widehat\Psi\circ\widehat\Phi.\)
The right-hand side is a composite of \(\tau\)-compatible homomorphisms, hence is \(\tau\)-compatible by Lemma~\ref{lem:taucomp-layer-preservation}.
Therefore the declared composite is again a \(\tau\)-compatible collapse morphism.
Associativity and the identity laws follow from the componentwise formula for composition.
\end{proof}

\begin{proposition}[Ambient \(\tau\)-compatible maps and \(\tau\)-compatible collapse morphisms]
\label{prop:ambient-taucomp-maps-vs-taucomp-collapse}
Let \(\mathbf D\) and \(\mathbf E\) be rigid direct systems.
The assignment \(\Phi\longmapsto\widehat\Phi \) is a bijection from \(\tau\)-compatible collapse morphisms \(\mathbf D\to\mathbf E\) onto \(\tau\)-compatible homomorphisms \(\mathfrak M(\mathbf D)\to\mathfrak M(\mathbf E). \)
More explicitly, if \(h\colon\mathfrak M(\mathbf D)\to\mathfrak M(\mathbf E) \) is \(\tau\)-compatible, then there is a unique isotone map \(\sigma_h\colon I\to J\) such that \(h(M_i)\subseteq N_{\sigma_h(i)}\) for every \(i\in I\), and the corresponding \(\tau\)-compatible collapse morphism is \((\sigma_h,(h\!\upharpoonright_{M_i})_{i\in I}). \)
\end{proposition}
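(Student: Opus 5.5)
The assignment \(\Phi\mapsto\widehat\Phi\) lands in \(\tau\)-compatible homomorphisms by Definition~\ref{def:taucomp-collapse-rigds-morphism}. So the proof reduces to three tasks: injectivity, surjectivity, and uniqueness of \(\sigma_h\).

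For injectivity, suppose \(\widehat\Phi=\widehat\Psi\) with \(\Phi=(\sigma,(\Phi_i))\) and \(\Psi=(\nu,(\Psi_i))\). The construction components \(M_i\) partition \(\mathfrak M(\mathbf D)\), so \(\Phi_i=\widehat\Phi\!\upharpoonright_{M_i}=\Psi_i\) as maps of sets. Also \(\Phi_i(e_i)\) lies in both \(N_{\sigma(i)}\) and \(N_{\nu(i)}\). Since the \(N_j\) are pairwise disjoint (each is nonempty), this forces \(\sigma(i)=\nu(i)\). Unlike the strict block case, we cannot use \(\Phi_i(e_i)=e_{\sigma(i)}\), because \(\Phi_i\) need not be unital. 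Disjointness of the target components replaces that argument, and the same reasoning shows that any \(\sigma\) with \(h(M_i)\subseteq N_{\sigma(i)}\) is unique.

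For surjectivity, let \(h\colon\mathfrak M(\mathbf D)\to\mathfrak M(\mathbf E)\) be \(\tau\)-compatible. By Theorem~\ref{thm:representation}\ref{item:representation-direct-system-recovery}, both systems are rigid, so \(\Cm(\mathfrak M(\mathbf D))=\{P_i:i\in I\}\) with \(\layer{P_i}=M_i\), and similarly \(\Cm(\mathfrak M(\mathbf E))=\{Q_j:j\in J\}\) with \(\layer{Q_j}=N_j\). Moreover, \(i\mapsto P_i\) and \(j\mapsto Q_j\) are order isomorphisms onto \(\le_{\min}\); this was verified for \(\kappa_{\mathbf D}\) in the proof of Theorem~\ref{thm:categorical-equivalence}. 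Proposition~\ref{prop:taucomp-homomorphisms-collapse-components} then gives an isotone block map \(\sigma_h\) on canonical blocks with \(h[M_{P_i}]\subseteq N_{\sigma_h(P_i)}\). Transporting it along these order isomorphisms yields an isotone \(\sigma_h\colon I\to J\) with \(h(M_i)\subseteq N_{\sigma_h(i)}\).

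Now set \(\Phi_i:=h\!\upharpoonright_{M_i}\colon M_i\to N_{\sigma_h(i)}\). Its piecewise realization is \(h\) itself, which is \(\tau\)-compatible by hypothesis. So \((\sigma_h,(\Phi_i))\) is a \(\tau\)-compatible collapse morphism by definition, and \(\widehat\Phi=h\). No internal conditions need checking here, because the definition is realization-based; Proposition~\ref{prop:taucomp-collapse-internal-consequences} is not needed.

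The main obstacle is existence and isotonicity of \(\sigma_h\). A \(\tau\)-compatible map need not preserve block units, so we cannot read off \(\sigma\) from \(h(e_i)\) as in Proposition~\ref{prop:ambient-maps-vs-rigds-morphisms}. This is exactly what Proposition~\ref{prop:taucomp-homomorphisms-collapse-components} provides, via \(\tau\)-multiplication-cohesiveness of the components and Lemma~\ref{lem:positive-idempotents-detect-component-order}. What remains is the bookkeeping of identifying construction indices with canonical blocks, which requires the rigid-system hypothesis so that Theorem~\ref{thm:representation}\ref{item:representation-direct-system-recovery} applies.
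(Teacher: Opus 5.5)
Your proposal is correct and follows essentially the same route as the paper: you identify the construction components with the canonical blocks via Theorem~\ref{thm:representation}\ref{item:representation-direct-system-recovery}, obtain the isotone index map from Proposition~\ref{prop:taucomp-homomorphisms-collapse-components}, and take the restrictions \(h\!\upharpoonright_{M_i}\), which form a collapse morphism directly by the realization-based definition. For uniqueness, you recover the component maps from disjointness of the source layers and the index from disjointness of the target layers \(N_j\), not from block units (which \(\tau\)-compatible maps need not preserve); the paper's inverse-bijection argument relies on exactly this.
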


\begin{proof}
If \(\Phi\) is a \(\tau\)-compatible collapse morphism, then \(\widehat\Phi\) is a \(\tau\)-compatible homomorphism by definition.
Conversely, let \(h\colon\mathfrak M(\mathbf D)\to\mathfrak M(\mathbf E)\) be \(\tau\)-compatible.
For \(i\in I\) and \(j\in J\), put
\[
P_i:=\PosId(\mathfrak M(\mathbf D))\cap M_i,
\qquad
Q_j:=\PosId(\mathfrak M(\mathbf E))\cap N_j.
\]
By Theorem~\ref{thm:representation}\ref{item:representation-direct-system-recovery},
\[
\Cm(\mathfrak M(\mathbf D))=\{P_i:i\in I\},
\qquad
\Cm(\mathfrak M(\mathbf E))=\{Q_j:j\in J\},
\]
and \(\layer{P_i}=M_i\) and \(\layer{Q_j}=N_j\).
Proposition~\ref{prop:taucomp-homomorphisms-collapse-components} therefore gives, for every \(i\in I\), a unique block \(Q_{\sigma_h(i)}\in \Cm(\mathfrak M(\mathbf E))\) such that \(h(M_i)=h(\layer{P_i})\subseteq \layer{Q_{\sigma_h(i)}}=N_{\sigma_h(i)}. \)
The same proposition also gives isotonicity of the induced block map \(P_i\mapsto Q_{\sigma_h(i)}\).
Since \(i\mapsto P_i\) and \(j\mapsto Q_j\) are order isomorphisms, \(\sigma_h\colon I\to J\) is isotone.
Hence \((\sigma_h,(h\!\upharpoonright_{M_i})_{i\in I})\) is a \(\tau\)-compatible collapse morphism whose realization is \(h\).

The two constructions are inverse to one another.
Indeed, the source layers \(M_i=\layer{P_i}\) are pairwise disjoint, so the component maps are determined by the realized ambient map.
The index value is also determined, because each nonempty set \(h(M_i)\) is contained in exactly one canonical layer \(N_j=\layer{Q_j}\) of \(\mathfrak M(\mathbf E)\).
\end{proof}

\begin{theorem}[Categorical equivalence for \(\tau\)-compatible homomorphisms]
\label{thm:taucomp-categorical-equivalence}
The assignments
\[
\mathfrak D^{\tau\mathrm{c}}\colon
\mathbf{LUA}^{\tau\mathrm{c}}_{\mathrm{fin}}
\longrightarrow
\mathbf{RigDS}^{\tau\mathrm{c}}_{\mathrm{fin}},
\qquad
\mathfrak M^{\tau\mathrm{c}}\colon
\mathbf{RigDS}^{\tau\mathrm{c}}_{\mathrm{fin}}
\longrightarrow
\mathbf{LUA}^{\tau\mathrm{c}}_{\mathrm{fin}}
\]
form quasi-inverse equivalences of categories.
\end{theorem}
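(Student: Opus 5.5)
I will follow the template of the proof of Theorem~\ref{thm:categorical-equivalence}, with Proposition~\ref{prop:ambient-taucomp-maps-vs-taucomp-collapse} playing the role that Proposition~\ref{prop:ambient-maps-vs-rigds-morphisms} played there.

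On objects, \(\mathfrak D^{\tau\mathrm{c}}\) and \(\mathfrak M^{\tau\mathrm{c}}\) agree with \(\mathfrak D\) and \(\mathfrak M\). By Theorem~\ref{thm:representation}\ref{item:representation-canonical-direct-system} and Corollary~\ref{cor:canonical-components-already-terminal-direct}, \(\mathfrak D(\mathbf M)\) is a rigid direct system. By Theorem~\ref{thm:representation}\ref{item:representation-reconstruction-from-direct-system}, \(\mathfrak M(\mathbf D)\) is a finite local-unit-aligned totally ordered monoid.

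On morphisms, \(\mathfrak M^{\tau\mathrm{c}}(\Phi):=\widehat\Phi\), which is \(\tau\)-compatible by definition. Functoriality holds because the realization of the declared composite is \(\widehat\Psi\circ\widehat\Phi\) and the realization of the identity is the identity. For a \(\tau\)-compatible \(f\colon\mathbf M\to\mathbf N\), I transport \(f\) along the canonical isomorphisms \(\eta_{\mathbf M}\), \(\eta_{\mathbf N}\) of Theorem~\ref{thm:representation}\ref{item:representation-canonical-recovery}. This gives \(h:=\eta_{\mathbf N}^{-1}\circ f\circ\eta_{\mathbf M}\colon\mathfrak M(\mathfrak D(\mathbf M))\to\mathfrak M(\mathfrak D(\mathbf N))\). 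I then define \(\mathfrak D^{\tau\mathrm{c}}(f)\) as the unique collapse morphism with \(\widehat{\mathfrak D^{\tau\mathrm{c}}(f)}=h\), supplied by Proposition~\ref{prop:ambient-taucomp-maps-vs-taucomp-collapse}. Concretely it is \((\sigma_f,(f\!\upharpoonright_{M_A})_A)\), using Proposition~\ref{prop:taucomp-homomorphisms-collapse-components}. Functoriality of \(\mathfrak D^{\tau\mathrm{c}}\) then follows from the injectivity in that bijection: both \(\mathfrak D^{\tau\mathrm{c}}(g\circ f)\) and \(\mathfrak D^{\tau\mathrm{c}}(g)\circ\mathfrak D^{\tau\mathrm{c}}(f)\) realize the same map.

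The step that needs genuine checking, and the main obstacle, is that \(\eta_{\mathbf M}\) and its inverse are morphisms of \(\mathbf{LUA}^{\tau\mathrm{c}}_{\mathrm{fin}}\), i.e.\ that they commute with \(\tau\). Any isomorphism of ordered monoids preserves \(R(x)\), \(L(x)\) and their maxima, so it automatically intertwines the local-unit maps. I will state this explicitly rather than rely on block-unit preservation. It is also why \(h\) above is \(\tau\)-compatible (Lemma~\ref{lem:taucomp-layer-preservation}). Naturality of \(\eta\) is then the same typed-equality check as before, carried out on each reconstructed component.

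For the other composite, I take \(\varepsilon_{\mathbf D}:=(\kappa_{\mathbf D},(\mathrm{id}_{\mathbf M_i})_i)\) with \(\kappa_{\mathbf D}(i)=P_i\). Its realization is \(\eta_{\mathfrak M(\mathbf D)}^{-1}\), which is \(\tau\)-compatible, so \(\varepsilon_{\mathbf D}\) is a collapse morphism. Its inverse is the collapse morphism realizing \(\eta_{\mathfrak M(\mathbf D)}\). The two composites realize identities, so bijectivity in Proposition~\ref{prop:ambient-taucomp-maps-vs-taucomp-collapse} forces them to be identity morphisms. Naturality of \(\varepsilon\) follows the same way: both sides of the naturality square realize \(\eta_{\mathfrak M(\mathbf E)}^{-1}\circ\widehat\Phi\), hence they coincide. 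I expect nothing beyond the \(\tau\)-invariance of isomorphisms to be new relative to Theorem~\ref{thm:categorical-equivalence}.
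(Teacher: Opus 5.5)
Your proposal is correct and follows essentially the same route as the paper. In both, \(f\) is transported along the comparison isomorphisms \(\eta\), \(\mathfrak D^{\tau\mathrm{c}}(f)\) is identified with \((\sigma_f,(f\!\upharpoonright_{M_A})_{A})\) as the collapse morphism realizing \(\eta_{\mathbf N}^{-1}\circ f\circ\eta_{\mathbf M}\), and functoriality, the inverse pair \(\varepsilon_{\mathbf D},\delta_{\mathbf D}\), and both naturality statements come from the bijection in Proposition~\ref{prop:ambient-taucomp-maps-vs-taucomp-collapse}. You also observe explicitly that an isomorphism of ordered monoids carries \(R(x)\) and \(L(x)\) onto the corresponding sets of the image, and hence commutes with \(\tau\); this justifies the \(\tau\)-compatibility of \(\eta_{\mathbf M}\) and \(\eta_{\mathbf M}^{-1}\), which the paper only asserts.
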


\begin{proof}
On objects, the two assignments are the canonical object assignments from Theorem~\ref{thm:representation}: \(\mathfrak D^{\tau\mathrm{c}}\) sends a finite local-unit-aligned totally ordered monoid to its canonical rigid direct system, and \(\mathfrak M^{\tau\mathrm{c}}\) reconstructs the ambient monoid of a rigid direct system.

On morphisms, let \(f\colon\mathbf M\to\mathbf N\) be \(\tau\)-compatible.
By Proposition~\ref{prop:taucomp-homomorphisms-collapse-components}, for every \(A\in\Cm(\mathbf M)\) there is a unique block \(\sigma_f(A)\in\Cm(\mathbf N)\) such that \(f[M_A]\subseteq N_{\sigma_f(A)},\) and \(\sigma_f\) is isotone.
Define
\[
\mathfrak D^{\tau\mathrm{c}}(f)
:=
\bigl(\sigma_f,(f\!\upharpoonright_{M_A})_{A\in\Cm(\mathbf M)}\bigr).
\]
Let \(\widehat f\) be its piecewise realization.
With the canonical comparison isomorphisms from Theorem~\ref{thm:representation}\ref{item:representation-canonical-recovery}, one has the typed equality
\[
\eta_{\mathbf N}\circ \widehat f=f\circ \eta_{\mathbf M}.
\]
Equivalently, \(\widehat f=\eta_{\mathbf N}^{-1}\circ f\circ\eta_{\mathbf M}\).
The maps \(\eta_{\mathbf M}\) and \(\eta_{\mathbf N}\) are \(\tau\)-compatible isomorphisms, so \(\widehat f\) is \(\tau\)-compatible.
Thus \(\mathfrak D^{\tau\mathrm{c}}(f)\) is a \(\tau\)-compatible collapse morphism.

Conversely, if \(\Phi=(\sigma,(\Phi_i)_{i\in I})\colon\mathbf D\to\mathbf E\) is a \(\tau\)-compatible collapse morphism, define \(\mathfrak M^{\tau\mathrm{c}}(\Phi):=\widehat\Phi.\)
By Definition~\ref{def:taucomp-collapse-rigds-morphism}, this is a \(\tau\)-compatible homomorphism \(\mathfrak M(\mathbf D)\to\mathfrak M(\mathbf E).\)

Functoriality of \(\mathfrak M^{\tau\mathrm{c}}\) follows from \(\widehat{\Psi\circ\Phi}=\widehat\Psi\circ\widehat\Phi\).
Functoriality of \(\mathfrak D^{\tau\mathrm{c}}\) follows from uniqueness in Proposition~\ref{prop:ambient-taucomp-maps-vs-taucomp-collapse}: for \(\tau\)-compatible \(f\colon\mathbf M\to\mathbf N\) and \(g\colon\mathbf N\to\mathbf P\), both \(\mathfrak D^{\tau\mathrm{c}}(g\circ f)\) and \(\mathfrak D^{\tau\mathrm{c}}(g)\circ\mathfrak D^{\tau\mathrm{c}}(f)\) realize the same typed ambient homomorphism \(\eta_{\mathbf P}^{-1}\circ g\circ f\circ\eta_{\mathbf M}\).
The identity case is immediate.

It remains to identify the quasi-inverse isomorphisms.
On the monoid side, for every \(\mathbf M\) the canonical comparison
\[
\eta_{\mathbf M}\colon
\mathfrak M^{\tau\mathrm{c}}(\mathfrak D^{\tau\mathrm{c}}(\mathbf M))
\longrightarrow
\mathbf M
\]
is induced by the identity maps on the canonical components.
In the tagged copy convention, it sends the reconstructed copy of \(x\in M_A\) to the element \(x\) of \(\mathbf M\).
It is therefore an isomorphism of finite local-unit-aligned totally ordered monoids and is \(\tau\)-compatible.

On the direct-system side, let \(\mathbf D=\bigl((\mathbf M_i)_{i\in I},(\rho^{\mathbf D}_{i,k})_{i\le k}\bigr)\) be rigid, and put
\[
P_i:=\PosId(\mathfrak M(\mathbf D))\cap M_i
\qquad(i\in I).
\]
By Theorem~\ref{thm:representation}\ref{item:representation-direct-system-recovery}, the construction-level layers are exactly the canonical layers:
\[
\Cm(\mathfrak M(\mathbf D))=\{P_i:i\in I\},
\qquad
\layer{P_i}=M_i.
\]
Moreover, the map
\[
\kappa_{\mathbf D}\colon I\to \Cm(\mathfrak M(\mathbf D)),
\qquad
\kappa_{\mathbf D}(i):=P_i,
\]
is an order isomorphism, the canonical component indexed by \(P_i\) is \(\mathbf M_i\), and its block unit is \(e_i\).
Let \(\alpha_{\mathbf D}:=\eta_{\mathfrak M(\mathbf D)}^{-1}\).
Then \(\alpha_{\mathbf D}\) yields the \(\tau\)-compatible collapse morphism
\[
\varepsilon_{\mathbf D}
=
\bigl(\kappa_{\mathbf D},(\mathrm{id}_{\mathbf M_i})_{i\in I}\bigr)
\colon
\mathbf D\to
\mathfrak D^{\tau\mathrm{c}}(\mathfrak M^{\tau\mathrm{c}}(\mathbf D)).
\]
Similarly let \(\beta_{\mathbf D}:=\eta_{\mathfrak M(\mathbf D)}\).
For each \(i\in I\), the map \(\beta_{\mathbf D}\) sends the reconstructed canonical component indexed by \(P_i=\kappa_{\mathbf D}(i)\) onto the original component \(M_i\), and sends its block unit to \(e_i\).
Hence Proposition~\ref{prop:ambient-taucomp-maps-vs-taucomp-collapse} yields a unique \(\tau\)-compatible collapse morphism
\[
\delta_{\mathbf D}\colon
\mathfrak D^{\tau\mathrm{c}}(\mathfrak M^{\tau\mathrm{c}}(\mathbf D))
\to
\mathbf D
\]
whose realization is \(\beta_{\mathbf D}\).
Its induced index map is \(\kappa_{\mathbf D}^{-1}\), since the component over \(P_i\) is mapped onto \(M_i\).
The composites \(\delta_{\mathbf D}\circ\varepsilon_{\mathbf D}\) and \(\varepsilon_{\mathbf D}\circ\delta_{\mathbf D}\) realize, respectively, \(\beta_{\mathbf D}\circ\alpha_{\mathbf D}=\mathrm{id}_{\mathfrak M(\mathbf D)}\) and \(\alpha_{\mathbf D}\circ\beta_{\mathbf D} =\mathrm{id}_{\mathfrak M(\mathfrak D(\mathfrak M(\mathbf D)))}\).
By the uniqueness part of Proposition~\ref{prop:ambient-taucomp-maps-vs-taucomp-collapse}, they are the corresponding identity morphisms.
Thus \(\varepsilon_{\mathbf D}\) is an isomorphism in \(\mathbf{RigDS}^{\tau\mathrm{c}}_{\mathrm{fin}}\).

Naturality of \((\eta_{\mathbf M})\) is the typed equality \(\eta_{\mathbf N}\circ \mathfrak M^{\tau\mathrm{c}}(\mathfrak D^{\tau\mathrm{c}}(f)) =f\circ\eta_{\mathbf M}\).
Naturality of \((\varepsilon_{\mathbf D})\) follows from the same uniqueness statement: for a collapse morphism \(\Phi\colon\mathbf D\to\mathbf E\), both composites in the naturality square realize the typed ambient map \(\eta_{\mathfrak M(\mathbf E)}^{-1}\circ \mathfrak M^{\tau\mathrm{c}}(\Phi)\).
Hence the two assignments are quasi-inverse equivalences of categories.
\end{proof}

\begin{remark}\label{rem:role-of-rigidity-taucomp-equivalence}
Rigidity enters the \(\tau\)-compatible equivalence in two essential ways.
First, it ensures that \(\tau\)-compatible homomorphisms induce well-defined maps on canonical components.
The reason is that rigidity makes every canonical component \(\tau\)-multiplication-cohesive, and this prevents a \(\tau\)-compatible homomorphism from splitting one source component among several target components, see Proposition~\ref{prop:taucomp-homomorphisms-collapse-components}.
Second, rigidity supplies the unit-constancy of proper transition maps, which makes a non-injective index map \(\sigma\) behave exactly like a component collapse.
If \(i<k\) and \(\sigma(i)=\sigma(k)\), then the image of the higher component absorbs the image of the lower component inside the common target component, as in Proposition~\ref{prop:taucomp-collapse-internal-consequences}.
Thus the direct-system side records idempotent collapse not by identifying underlying elements, but by allowing several source components to land in the same target component.
\end{remark}

\begin{remark}[Residuated origins and \texorpdfstring{\(\tau\)}{tau}-compatible morphisms]\label{rem:not-res}
The basic local-unit stratification originates in the residuated setting of \cite{Jenei2022GroupRepr}, where the decomposition is organized by the exact local-unit layers \(L_u=\{x:\tau(x)=u\}\).
In \cite{JeneiLUARepresentation}, this layer structure was developed further in the finite local-unit-aligned ordered-monoid setting by introducing \(\tau\)-saturated products, \(\tau\)-multiplication-coherent partitions, and the canonical rigid direct-system representation.

As explained in Remark~8.13 of \cite{JeneiLUARepresentation}, this refined decomposition is not residuation-theoretic in nature: even when the ambient monoid is the monoidal reduct of a residuated chain, its component monoids need not themselves be residuated.
Accordingly, the categorical theory developed below is monoid-theoretic rather than residuation-theoretic: its reconstruction arguments and morphism classes use only the ordered-monoid structure, the local-unit map, and the canonical rigid direct system.

There is nevertheless a natural connection at the level of morphisms.
Suppose that the local-unit-aligned monoid is the monoidal reduct of a residuated chain, with residuals denoted by \(\backslash\) and \(/\).
In a residuated monoid, \(x/x\) is the greatest right local unit of \(x\), while \(x\backslash x\) is the greatest left local unit of \(x\).
In the local-unit-aligned case these two elements coincide, and their common value is \(\tau(x)\).
Consequently, if \(h\colon \mathbf A\to \mathbf B\) is a homomorphism of the corresponding residuated structures, so that \(h\) preserves multiplication and both residuals, then for every \(x\in A\) one has \(h(\tau_{\mathbf A}(x))=h(x/x)=h(x)/h(x)=\tau_{\mathbf B}(h(x))\).
Equivalently, \(h\tau_{\mathbf A}=\tau_{\mathbf B}h\).
Thus residuated homomorphisms are automatically \(\tau\)-compatible homomorphisms in the sense used in the second categorical comparison below.
\end{remark}

\section*{Conclusion}

We have shown that the canonical rigid direct-system representation of finite local-unit-aligned totally ordered monoids is functorial in two natural senses.
The first functorial form uses strict block morphisms.
In that strict setting, morphisms of monoids correspond exactly to directed-order-compatible morphisms of the associated rigid direct systems, and the decomposition and reconstruction assignments form quasi-inverse equivalences of categories.

The second form uses the intrinsic local-unit condition \(\tau_{\mathbf N}f=f\tau_{\mathbf M}\).
The two morphism classes control different structure.
A \(\tau\)-compatible homomorphism is formulated entirely in terms of the local-unit map and may fail to preserve block units; conversely, a strict block morphism is not defined by requiring commutation with \(\tau\).
Example~\ref{ex:block-preserving-not-unit-preserving} shows that a \(\tau\)-compatible homomorphism need not be a strict block morphism, while Example~\ref{ex:strict-block-not-tau-compatible} shows that a strict block morphism need not be \(\tau\)-compatible.
Thus the two categorical comparisons are complementary rather than nested.

In the \(\tau\)-compatible setting, several source components may collapse into one target component.
On the direct-system side, this collapse is encoded by non-injective isotone maps between index chains and by component maps satisfying the corresponding compatibility, unit, and absorption conditions.
Rigidity is essential in this second equivalence, because the unit-constant transition maps are exactly what make collapse compatible with multiplication and order.

Thus the representation theorem has a categorical form for the two natural morphism classes considered here: the canonical decomposition is not only an object-level reconstruction device, but also a functorial invariant for both strict block morphisms and intrinsic \(\tau\)-compatible homomorphisms.
In this sense the rigid direct-system representation provides a categorical classification of finite local-unit-aligned totally ordered monoids, with morphisms controlled by the same local-unit structure that governs the object-level reconstruction.

\backmatter

\bmhead{Acknowledgements}

The author gratefully acknowledges support from the Ministry of Culture and Innovation of Hungary, through the National Research, Development and Innovation Fund, grant no.~K138596.
\bigskip

\section*{Declarations}

\noindent\textbf{Competing interests.}
The author declares that he has no competing interests.

\noindent\textbf{Data availability.}
No external datasets were used.
The finite example in Example~\ref{ex:block-preserving-not-unit-preserving} can be checked directly from the displayed multiplication table and the defining conditions.


\begin{thebibliography}{99}

\bibitem{Bonzio2018}
Bonzio, S.:
Dualities for P\l{}onka sums.
\emph{Logica Universalis} \textbf{12} (2018), 327--339.
\newline\url{https://doi.org/10.1007/s11787-018-0209-4}

\bibitem{Bowman1975}
Bowman, T.T.:
Construction functors for topological semigroups.
\emph{Pacific Journal of Mathematics} \textbf{60} (1975), no.~2, 27--36.

\bibitem{Clifford1941}
Clifford, A.H.:
Semigroups admitting relative inverses.
\emph{Annals of Mathematics} \textbf{42} (1941), 1037--1049.
\newline\url{https://doi.org/10.2307/1968781}

\bibitem{Clifford1954}
Clifford, A.H.:
Naturally totally ordered commutative semigroups.
\emph{American Journal of Mathematics} \textbf{76} (1954), 631--646.
\newline\url{https://doi.org/10.2307/2372706}

\bibitem{Clifford1958}
Clifford, A.H.:
Totally ordered commutative semigroups.
\emph{Bulletin of the American Mathematical Society} \textbf{64} (1958), 305--316.
\newline\url{https://doi.org/10.1090/S0002-9904-1958-10221-9}

\bibitem{CliffordPreston1961}
Clifford, A.H., Preston, G.B.:
\emph{The algebraic theory of semigroups, Vol.~I}.
American Mathematical Society, Providence (1961)

\bibitem{DeWolfPronk2018}
DeWolf, D., Pronk, D.:
The Ehresmann--Schein--Nambooripad theorem for inverse categories.
\emph{Theory and Applications of Categories} \textbf{33} (2018), no.~27,
813--831.

\bibitem{FuscoPaoli2025}
Fusco, L., Paoli, F.:
Enriched P\l{}onka sums.
\emph{Algebra Universalis} \textbf{87} (2025), no.~1.
\newline\url{https://doi.org/10.1007/s00012-025-00910-x}

\bibitem{Hollings2012}
Hollings, C.:
The Ehresmann--Schein--Nambooripad theorem and its successors.
\emph{European Journal of Pure and Applied Mathematics} \textbf{5} (2012),
no.~4, 414--450.

\bibitem{JanelidzeLaanMarki2008}
Janelidze, G., Laan, V., M\'arki, L.:
Limit preservation properties of the greatest semilattice image functor.
\emph{International Journal of Algebra and Computation} \textbf{18} (2008),
853--867.
\newline\url{https://doi.org/10.1142/S0218196708004664}

\bibitem{JeneiLUARepresentation}
Jenei, S.:
A canonical rigid direct-system representation of finite local-unit-aligned
totally ordered monoids.
arXiv:2607.23801v1, 2026.
\newline\url{https://doi.org/10.48550/arXiv.2607.23801}

\bibitem{Jenei2022GroupRepr}
Jenei, S.:
Group representation for even and odd involutive commutative residuated chains.
\emph{Studia Logica}
\textbf{110} (2022), 881--922.
First circulated as arXiv:1910.01404 (2019).
\newline\url{https://doi.org/10.1007/s11225-021-09981-y}
\newline\url{https://doi.org/10.48550/arXiv.1910.01404}

\bibitem{Manuell2022}
Manuell, G.:
Monoid extensions and the Grothendieck construction.
\emph{Semigroup Forum} \textbf{105} (2022), 488--507.
\newline\url{https://doi.org/10.1007/s00233-022-10294-2}

\bibitem{Plonka1967}
P\l{}onka, J.:
On a method of construction of abstract algebras.
\emph{Fundamenta Mathematicae} \textbf{61} (1967), 183--189.
\newline\url{https://doi.org/10.4064/fm-61-2-183-189}

\bibitem{Plonka1968}
P\l{}onka, J.:
Some remarks on sums of direct systems of algebras.
\emph{Fundamenta Mathematicae} \textbf{62} (1968), no.~3, 301--308.

\bibitem{RomanowskaSmith1991}
Romanowska, A.B., Smith, J.D.H.:
On the structure of semilattice sums.
\emph{Czechoslovak Mathematical Journal} \textbf{41} (1991), no.~1, 24--43.

\bibitem{RomanowskaSmith1997}
Romanowska, A.B., Smith, J.D.H.:
Duality for semilattice representations.
\emph{Journal of Pure and Applied Algebra} \textbf{115} (1997), 289--308.
\newline\url{https://doi.org/10.1016/S0022-4049(96)00026-6}

\bibitem{Zawadowski2015}
Zawadowski, M.:
Generalized P\l{}onka sums and products.
\emph{Applied Categorical Structures} \textbf{23} (2015), 63--86.
\newline\url{https://doi.org/10.1007/s10485-013-9364-1}

\end{thebibliography}
\end{document}